\documentclass[a4paper, twoside]{article} 

% for print
\usepackage[
	top = 1.15 in, 
	bottom = 1.25 in,
	left = 1.15 in, 
	right = 1.15 in,
	includehead]{geometry}
% font sizes
\usepackage{scrextend}
\changefontsizes{11 pt}

%% for online reading
%\usepackage[
%	top = 1.15 cm, 
%	bottom = 2.25 cm,
%	left = 1.15 cm, 
%	right =1.15 cm,
%	includehead]{geometry}
%% font sizes
%\usepackage{scrextend}
%\changefontsizes{14pt}

\usepackage[all]{nowidow}

\usepackage[utf8x]{inputenc}

\usepackage{amscd,amssymb,amsmath}
\usepackage{amsrefs}

%nimbus roman
\usepackage{mathptmx}
\usepackage[T1]{fontenc}

\usepackage{multicol}

\usepackage[bb=ams, cal=euler, scr=rsfso , frak=euler]{mathalfa}

\usepackage{url}
\usepackage[normalem]{ulem}

\BibSpec{article}{%
    +{}  {\PrintAuthors}                {author}
    +{,} { \textit}                     {title}
    +{.} { }                            {part}
    +{:} { \textit}                     {subtitle}
    +{,} { \PrintContributions}         {contribution}
    +{.} { \PrintPartials}              {partial}
    +{,} { }                            {journal}
    +{}  { \textbf}                     {volume}
    +{}  { \PrintDatePV}                {date}
    +{,} { \issuetext}                  {number}
    +{,} { \eprintpages}                {pages}
    +{,} { }                            {status}
    +{,} { \url}                        {url}    % <---- ADDED
    +{,} { \PrintDOI}                   {doi}
    +{,} { available at \eprint}        {eprint}
    +{}  { \parenthesize}               {language}
    +{}  { \PrintTranslation}           {translation}
    +{;} { \PrintReprint}               {reprint}
    +{.} { }                            {note}
    +{.} {}                             {transition}
    +{}  {\SentenceSpace \PrintReviews} {review}
}

\usepackage{graphicx}
\usepackage[british]{babel}
\usepackage{caption}
\usepackage{mathdots}
\usepackage{mathtools}

%% TREES %%
\usepackage{pgf,tikz}
\usepackage{tikz-cd}
\usetikzlibrary{decorations.text}
\usetikzlibrary{shapes.arrows}
\usetikzlibrary{calc}
\usetikzlibrary{matrix,arrows}
\usepackage{stackrel}
\usepackage[shortlabels]{enumitem} %Para listas estilo Weibel
\usepackage{setspace} %Para espaciado de renglones
\spacing{1.15}
%\onehalfspacing %Mejor lectura
\usepackage{etoolbox}
\usetikzlibrary{trees}

\usepackage{verbatim}

\usepackage{graphicx}
\usepackage{subfigure}
\usepackage{makeidx}
\usepackage{array}
\usepackage{cancel}
\usepackage{polynom}
\usepackage[pdf,all]{xy}
\xyoption{line}
\CompileMatrices

\newcommand{\hem}{\hspace{0.5 em}} 

%% TAMAÑO CELDAS %%     
\usepackage{pgf,tikz}
\usepackage{tikz-cd}
\usetikzlibrary{calc}
\usetikzlibrary{matrix,arrows}
\usepackage{stackrel}
\usepackage[shortlabels]{enumitem} %Para listas estilo Weibel
\usepackage{setspace} %Para espaciado de renglones
\spacing{1.15}
%\onehalfspacing %Mejor lectura
\usepackage{etoolbox}
\usetikzlibrary{trees}

\usepackage{enumitem}
\listfiles
\setlist[enumerate]{label= (\arabic*)}

\patchcmd{\section}{\normalfont}{\normalfont\large}{}{}

%%%%%%%%%% WRAP FIGURES %%%%%%%%%%%%%%%%
\usepackage{wrapfig, framed, caption}

\definecolor{niceblue}{rgb}{0.03, 0.27, 0.49}

%%%%%%%%%% FANCY HEADER %%%%%%%%%%%%%%%%
\usepackage{fancyhdr}
\pagestyle{fancy}
\fancyhead[RE]{\small\it Differential forms on
smooth operadic algebras}
\fancyhead[LO]{\small \it R. Campos and P. Tamaroff}
\fancyhead[RO,LE]{\small\bf\thepage}
\fancyfoot[L,R,C]{}

\usepackage{letltxmacro}

\AtBeginDocument{%
    \LetLtxMacro\refa{\ref}%
    \DeclareRobustCommand{\ref}[2][]{(\refa#1{#2})}%
}
 %%%%%%%%%%%%%%%%%%%%%%%

% front matter stlye 
\renewenvironment{abstract}{%
\small\begin{center}
\begin{minipage}{.9\textwidth}
%\textbf{\textcolor{newcol}{Abstract.}}
}
{\par\noindent\end{minipage}\end{center}\vspace{3 em}}
\makeatletter
\renewcommand\@maketitle{%
\hfill
\begin{center}\begin{minipage}{0.9 	\textwidth}
\centering
\vskip 2em
\let\footnote\thanks 
{\LARGE \@title \par }
\vspace{-.5 em}
%\hrulefill
\vskip 1 em
{\large \@author \par}
\vspace{3.5 em}

\end{minipage}\end{center}
\par
}
\makeatother
%
%%%%%%%%%%%%%%%%%%%%%%%%%%%%%%%%%%%%%%%%%%%%%%%%%%%%%%

\definecolor{newcol}{rgb}{0.0, 0, 0}
\DeclareTextFontCommand{\new}{\color{newcol}\em}

\usepackage[pdftex, colorlinks,bookmarks 
= true,bookmarksnumbered = 
true]{hyperref}

\usepackage{sectsty}

\chapterfont{\color{newcol}}  
% sets colour of chapters
\sectionfont{\color{newcol}}  
% sets colour of sections
\subsectionfont{\color{newcol}}  
% sets colour of sections

%% SMALL HYPHEN %%
\mathchardef\hy="2D % Define a "math hyphen"

\usepackage{cutwin}

% let \[ and \] be the same as \begin{equation} and \end{equation}
\makeatletter
\AtBeginDocument{%
  \let\[\@undefined
  
\DeclareRobustCommand{\[}{\begin{equation}}%
  \let\]\@undefined
  
\DeclareRobustCommand{\]}{\end{equation}}%
}
\makeatother 
% but only print equation numbers if needed
\mathtoolsset{showonlyrefs,showmanualtags}

\usepackage{amsthm}
\usepackage{thmtools}
\newtheoremstyle{mytheorem}
  {\topsep}   % ABOVESPACE
  {\topsep}   % BELOWSPACE
  {\itshape}  % BODYFONT
  {0pt}       % INDENT (empty value is the same as 0pt)
  {\bfseries\color{newcol}} % HEADFONT
  {\color{newcol}}         % HEADPUNCT
  {5pt plus 1pt minus 1pt} % HEADSPACE
  {}          % CUSTOM-HEAD-SPEC
  
\theoremstyle{mytheorem}
\newtheorem{theorem}{Theorem}[section]
\newtheorem{prop}[theorem]{Proposition} 
\newtheorem{lemma}[theorem]{Lemma} 
\newtheorem*{conj*}{Conjecture}
\newtheorem{cor}[theorem]{Corollary} 

\newtheoremstyle{mytheorem2}
  {\topsep}   % ABOVESPACE
  {\topsep}   % BELOWSPACE
  {\itshape}  % BODYFONT
  {0pt}       % INDENT (empty value is the same as 0pt)
  {\bfseries\color{newcol}} % HEADFONT
  {\color{newcol}{.}}         % HEADPUNCT
  {5pt plus 1pt minus 1pt} % HEADSPACE
  {}          % CUSTOM-HEAD-SPEC
  
\theoremstyle{mytheorem2}
\newtheorem*{theorem*}{Theorem}
\newtheorem*{lemma*}{Lemma}
\newtheorem*{corollary*}{Corollary}
\newtheorem*{conjecture*}{Conjecture}

\newtheoremstyle{mydefinition}
  {\topsep}   % ABOVESPACE
  {\topsep}   % BELOWSPACE
  {}  % BODYFONT
  {0pt}       % INDENT (empty value is the same as 0pt)
  {\bfseries\color{newcol}} % HEADFONT
  {\color{newcol}}         % HEADPUNCT
  {5pt plus 1pt minus 1pt} % HEADSPACE
  {}          % CUSTOM-HEAD-SPEC

\theoremstyle{mydefinition}

\newtheorem{definition}[theorem]{Definition}
\newtheorem{remark}[theorem]{Remark}

\newtheoremstyle{mydefinition2}
  {\topsep}   % ABOVESPACE
  {\topsep}   % BELOWSPACE
  {}  % BODYFONT
  {0pt}       % INDENT (empty value is the same as 0pt)
  {\bfseries\color{newcol}} % HEADFONT
  {\color{newcol}{.}}         % HEADPUNCT
  {5pt plus 1pt minus 1pt} % HEADSPACE
  {}          % CUSTOM-HEAD-SPEC
  
\theoremstyle{mydefinition2}
\newtheorem*{definition*}{Definition}
\newtheorem*{remark*}{Remark}
\newtheorem*{obs*}{Observation}
\newtheorem*{example*}{Example}
\newtheorem*{que*}{Question}

\usepackage{graphicx}

\newcommand{\antishriek}{\text{\normalfont{\raisebox{\depth}{\textexclamdown}}}}

\newcommand{\A}{\mathsf{Ass}}
\newcommand{\C}{\mathsf{Com}}
\renewcommand{\L}{\mathsf{Lie}}
\newcommand{\Lie}{\mathsf{Lie}}
\newcommand{\Perm}{\mathsf{Perm}}
\newcommand{\preLie}{\mathsf{PreLie}}

\newcommand{\PP}{\mathcal{P}}
\newcommand{\QQ}{\mathcal{Q}}
\newcommand{\OO}{\mathcal{O}}

\newcommand{\KK}{\mathcal{K}}

\newcommand{\YY}{\mathcal{Y}}
\newcommand{\XX}{\mathcal{X}}

    \usepackage{relsize}

\newcommand{\Cog}{\mathsf{Cog}}
\newcommand{\Def}{\mathrm{Def}}

\setlength{\parskip}{0.125em}

\newcommand\Tpoly[1]{\operatorname{Poly}^*(#1)}

\newenvironment{titemize}{
\begin{itemize}
  \setlength{\itemsep}{0pt}
  \setlength{\parskip}{0pt}
}{\end{itemize}}

%Functors
\newcommand{\imor}{\interleave\kern-.45em\longrightarrow}
\newcommand{\SMod}{{}_\Sigma\mathsf{dgMod}}

\newcommand{\Mod}{\mathsf{Mod}}

\newcommand{\Cxs}{{}_\kk\mathsf{Ch}}

\newcommand{\Der}{\operatorname{Der}}

\newcommand{\g}{\mathfrak{g}}

\newcommand{\HH}{\mathscr{H}}
\newcommand{\HHC}{\mathrm{HH}}

\newcommand{\HKR}{\operatorname{HKR}}
\newcommand{\cof}{\mathit{Q}}
\newcommand{\FF}{\mathcal{F}}
\newcommand{\Br}{\mathsf{Br}}

\newenvironment{tenumerate}{
\begin{enumerate}
  \setlength{\itemsep}{0pt}
  \setlength{\parskip}{0pt}
}{\end{enumerate}}

%% TODO NOTES! %%
\usepackage{xargs}                      
% Use more than one optional parameter in a new commands
%\usepackage[pdftex,dvipsnames]{xcolor}  
% Coloured text etc.
\usepackage[colorinlistoftodos,prependcaption,textsize=small]{todonotes}
\newcommandx{\unsure}[2][1=]{\todo[linecolor=blue,backgroundcolor=blue!25!white,bordercolor=blue,#1]{#2}}
\newcommandx{\change}[2][1=]{\todo[linecolor=blue,backgroundcolor=blue!25,bordercolor=blue,#1]{#2}}
\newcommandx{\info}[2][1=]{\todo[linecolor=OliveGreen,backgroundcolor=OliveGreen!25,bordercolor=OliveGreen,#1]{#2}}
\newcommandx{\improvement}[2][1=]{\todo[linecolor=Plum,backgroundcolor=Plum!25,bordercolor=Plum,#1]{#2}}
\newcommandx{\thiswillnotshow}[2][1=]{\todo[disable,#1]{#2}}

\definecolor{col1}{rgb}{0.45, 0.66, 0.76}
\definecolor{col2}{rgb}{0.69, 0.88, 0.9}

%%%%%%%%%%% RICARDO %%%%%%%%%%%%%%%%%%%%

\definecolor{comment}{rgb}{0.0, 0.0, 0.61}

\newcommand\id{\mathrm{id}}

\newcommand{\Alg}{\mathsf{Alg}}

\renewcommand{\tt}{\otimes}

\newcommand{\NN}{\mathbb N}
\newcommand{\ZZ}{\mathbb Z}

\newcommand{\kk}{\Bbbk}

\renewcommand{\Bar}{\operatorname{B}} %Decide on good font

\newcommand{\Ext}{\operatorname{Ext}}
\newcommand{\Tor}{\operatorname{Tor}}

\usepackage[new]{old-arrows} %% LONG HOOK ARROW!

%\definecolor{newterm-color}{RGB}{0, 51, 153}
%\newcommand\newterm[1]{%
%\textcolor{newterm-color}{\bfseries\itshape #1}%
%}

% textual claims in equations

\newcommand{\Addresses}{{% additional braces for segregating \footnotesize
  \bigskip
  \footnotesize
  \textsc{IMAG, Univ. Montpellier, CNRS, Montpellier, France}\par\nopagebreak
  \textit{E-mail address:} \texttt{ricardo.campos@umontpellier.fr}
  }
  {\medskip\par\footnotesize
  \textsc{School of Mathematics, Trinity College, Dublin 2, Ireland}\par\nopagebreak
  \textit{E-mail address:} \texttt{pedro@maths.tcd.ie}
  }}
  
\usepackage{titletoc}

\titlecontents{chapter}
[0.2em] %
{\bigskip}
%{\contentslabel[\thecontentslabel.]{2em}\hspace{0.667em}}%\thecontentslabel
{\makebox[2em][r]{\thecontentslabel.}\hspace{0.333em}}%\thecontentslabel
{\hspace*{-2em}}
{\hfill\contentspage}[\smallskip]

\titlecontents{section}% <section>
[0.2 em]% <left>
{\small}% <above-code>
{\thecontentslabel.\hspace{3pt}}%<numbered-entry-format>; you could also 
%use  {\thecontentslabel. } to show the numbers
{}% <numberless-entry-format>
{\enspace\titlerule*[0.5pc]{.}\contentspage}%<filler-page-format>
\titlecontents*{subsection}% <section>
[1.2 em]% <left>
{\footnotesize}% <above-code>
{\thecontentslabel. \hspace{3pt}}% <numbered-entry-format>; you could also 
%use {\thecontentslabel. } to show the numbers
{}% <numberless-entry-format>
{}% <filler-page-format>
[ --- \ ]% <separator>
[]% <end>
\setcounter{tocdepth}{2}% Display up to \subsection in ToC

\setlength\parindent{0 em}

\hypersetup{colorlinks,
	linkcolor={niceblue},
	citecolor={niceblue},
	urlcolor={niceblue}}  

\DeclareMathVersion{normal2}

\usepackage{lipsum}
\raggedbottom 
\makeindex

\title{\setstretch{0.85}{\textbf{Differential forms on smooth\\ operadic algebras}}}
\author{\textsc{Ricardo Campos \\ Pedro Tamaroff}}

\date{}
\begin{document}
\maketitle

\begin{abstract}
The classical Hochschild--Kostant--Rosenberg
(HKR) theorem computes the Hochschild homology
and cohomology of
smooth commutative algebras. In this paper, we generalise
this result to other kinds of algebraic 
structures. Our main insight is that producing HKR
isomorphisms for other types of algebras 
is directly related to computing quasi-free
resolutions in the category of left modules over an operad;
we establish that an HKR-type result follows
as soon as this resolution is diagonally pure.

As examples we obtain a permutative and a pre-Lie 
HKR theorem for smooth commutative and smooth brace
algebras, respectively. We also prove an HKR theorem
for operads obtained from a filtered distributive
law, which recovers, in particular,
all the aspects of the classical HKR theorem.
Finally, we show that this property is Koszul dual to 
the operadic PBW property defined by V. Dotsenko 
and the second author.

\bigskip

\textbf{MSC 2020:} 18M70; 18N40, 13D03, 13N05.

\end{abstract}
\thispagestyle{empty}

\section{Introduction}\label{sec:intro}

%Historic intro

Hochschild homology is a classical homology theory for associative algebras\cite{hochschild1945cohomology} dating back to 1945.
Originally conceived by Hochschild to obtain a cohomological 
proof of Wedderburn's theorem~\cite{Kadison1995},
this cohomology theory plays nowadays important roles in 
representation theory~\cite{Bustamante2016}, 
deformation 
theory~\cite{ginot2016deformation,Gerstenhaber1988,
DefTheoryNotes}, derived geometry~\cite{toen_vezzosi_2011}, factorisation homology~\cite{ginot2014higher}, and
formality results~\cite{Hinich2003}, among others.

While Hochschild homology of an
associative 
$k$-algebra $A$ is in general difficult to 
%%Add some papers that have difficult computations
compute, in the case where $k$ is a field of 
characteristic zero and $A$ is commutative and 
smooth (for example if it is the coordinate
ring of a smooth algebraic variety) the 
celebrated Hochschild--Kostant--Rosenberg 
(HKR) theorem~\cite{HKR} identifies the Hochschild 
homology of $A$ with its module $\Omega_{A}^*$ 
of algebraic differential forms, which is 
nothing but a free commutative algebra over
the module $\Omega_{A}^1$ of K\"ahler differentials of $A$. 
In fact, this result is also used the other way 
around: it provides us with a way to generalise
geometrical results, usually stated in terms
of differential forms and fields on manifolds,
to non-commutative or non-smooth algebras by 
replacing these geometrical objects with 
Hochschild homology and cohomology. This
philosophy falls under the general theory
of non-commutative geometry; see~\cite{Dolgushev2010,Tsygan2005,GraciaBonda2001,CONNES2008}.

The HKR theorem depends on two hypothesis on the
underlying algebra that are of very different 
flavours: while smoothness
is a property that concerns certain geometric 
regularity of the algebra itself, and is thus
intrinsic to the category of commutative algebras,
the constraint that the associative algebra be 
commutative for one to obtain a description
of its cohomology involves, perhaps in a 
more mysterious way, the interplay between the 
category of commutative algebras and the one of 
associative algebras. 

Recently, V.~Dotsenko and the second author
have shown in~\cite{PBW} that one can produce,
using the language of operads, what 
they consider the `bare-bones' framework for 
Poincar\'e–Birkhoff–Witt (PBW) type theorems
about universal enveloping algebras of types
of algebras. There, they have shown that one can  
understand PBW-type results by way of studying a
homological property between morphisms of 
operads: the universal enveloping algebra functor
associated to a map of algebraic operads
satisfies a PBW-type property if and only if
it makes its codomain a \emph{free} right module.

We pursue this philosophy here, by 
considering the question of the existence of 
an HKR-like theorem for operadic algebras.
%%%Introduce HKR theorem here
 Since
the ingredients we will need are slightly more involved
than those in~\cite{PBW}, let us first recall these.

Given an operad $\PP$ and an algebra $A$ over $\PP$, 
we can consider its \emph{cotangent homology}~\cite{Milles}, which
we will write $\HH_*^\PP(A,M)$, and which corresponds to the 
Hochschild homology when $\PP$ is the operad governing
associative algebras, to Chevalley--Eilenberg homology 
when $\PP$ is the operad governing Lie algebras, and
to Harrison homology when $\PP$ is the operad governing 
commutative algebras over a field of characteristic zero.
Any map of algebraic operads 
$f:\PP \longrightarrow \QQ$ induces a restriction
functor $f^*\colon \QQ\hy\Alg \longrightarrow \PP\hy\Alg$ 
and, in turn, a map
\[ \HH_*(f) : \HH_*^\QQ(A,A)\longrightarrow 
	\HH_*^\PP(A,A),\]
and an HKR theorem can be seen as a way to promote this
map to an isomorphism, by applying an appropriate
functor to the codomain; the resulting object in the codomain deserves to be thought as ``differential forms'' on $A$. With this in mind,
our first step towards obtaining an HKR-type
formalism is the following result. It says that
promoting the map $\HH_*(f)$ to a possible candidate
for an HKR isomorphism can be done as soon as one
produces a quasi-free resolution of $\QQ$ in \emph{left}
$\PP$-modules. This resolution will have the form
$(\PP\circ\YY,d)$ for some graded symmetric sequence $\YY$ 
of generators, and these will play the central role of the ``functor 
of differential forms for $f$''. We will see it is convenient to phrase
our result in terms of the complexes $\Def_*^\PP(A)$ and 
$\Def_*^\QQ(A)$ computing the homology groups above.

\begin{theorem*}
Let $\FF=(\PP\circ\YY,d)$ be a quasi-free resolution
of $\QQ$ in left $\PP$-modules. Then there exists
a functorial complex $\Omega_{\FF,A}^*$ of `differential forms' 
on $A$ associated to $f$, depending on $\FF$ and 
$\Def_*^\QQ(A)$, and a morphism
of complexes
\[ \HKR_{\FF,A} : \Def_*^\QQ(A) \longrightarrow \Omega_{\FF,A}^*.\]
\end{theorem*}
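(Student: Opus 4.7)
The plan is to use the resolution $\FF$ to transport a cofibrant resolution of $A$ from $\QQ$-algebras to $\PP$-algebras, and to read off $\Omega^*_{\FF, A}$ as the indecomposables of the transported resolution. Concretely, I would first fix a functorial quasi-free $\QQ$-algebra replacement $(\QQ \circ V, d_V) \xrightarrow{\sim} A$, for instance the cobar-bar resolution, so that $\Def^\QQ_*(A)$ is modelled by the indecomposable quotient $(V, \bar d_V)$, where $\bar d_V$ is the linear part of the twisting morphism $V \to \QQ \circ V$ encoding $d_V$. I would then lift this twisting morphism along the surjective quasi-isomorphism $\FF = (\PP \circ \YY, d_\FF) \twoheadrightarrow \QQ$ of left $\PP$-modules to a map $\widetilde d_V \colon V \to \PP \circ \YY \circ V$ whose linear part recovers $\bar d_V$. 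Such a lift exists by a standard obstruction-theoretic induction on a weight filtration of $V$, using the acyclicity of $\ker(\FF \to \QQ)$; the Maurer--Cartan condition then produces a square-zero total differential $d'$ on the free $\PP$-algebra $\PP(\YY \circ V)$, making $(\PP(\YY \circ V), d') \to f^*A$ into a quasi-free $\PP$-algebra resolution. I would then set
\[
\Omega^*_{\FF, A} \;:=\; (\YY \circ V, \bar d'),
\]
manifestly built from the data of $\FF$ (through $\YY$ and $d_\FF$) and of $\Def^\QQ_*(A)$ (through $V$ and $\bar d_V$).

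Since the augmented quasi-isomorphism $\FF \to \QQ$ preserves the operadic unit, the identity operation sits inside $\YY(1)$ and is $d_\FF$-closed, so the inclusion of the arity-one summand provides a well-defined map
\[
\HKR_{\FF, A} \colon \Def^\QQ_*(A) \longrightarrow \Omega^*_{\FF, A}, \qquad v \longmapsto \id \otimes v.
\]
That this intertwines $\bar d_V$ with $\bar d'$ is immediate from the requirement that the linear part of $\widetilde d_V$ equal $\bar d_V$, and functoriality in $A$ is inherited from that of the cobar-bar resolution.

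The main obstacle is producing the lift $\widetilde d_V$ so that the induced $d'$ actually satisfies $d'^2 = 0$; equivalently, one has to transfer a Maurer--Cartan element along the quasi-isomorphism $\FF \to \QQ$ in the convolution dg Lie algebra that controls quasi-free $\PP$-algebra structures with underlying module $\YY \circ V$. Once this transfer is in place, the remaining verifications, that $d'$ is a derivation and therefore descends to a well-defined $\bar d'$ on the indecomposables, that $\HKR_{\FF, A}$ commutes with differentials, and that the whole construction is natural in $A$, follow formally from the corresponding statements for the cobar-bar resolution and the functoriality of the composition product.
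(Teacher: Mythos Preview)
There is a genuine gap: you have constructed the comparison map $\HH_*(f)$ rather than the HKR map, and your $\Omega^*_{\FF,A}$ is the wrong object. Part of the confusion may stem from what looks like a typo in the introductory statement: in the body of the paper (Proposition~\ref{prop:natHKR}) the source of the HKR map is $\Def_*(f^*A)$, the $\PP$-cotangent complex of the pulled-back algebra, not $\Def_*^\QQ(A)$. With that in mind, look again at what you have built. Your $(\YY\circ V,\bar d')$ is, by definition, the indecomposables of the lifted $\PP$-resolution $(\PP(\YY(V)),d')\to f^*A$; that is precisely a model for $\Def_*^\PP(f^*A)$. Your map $v\mapsto\id\otimes v$ is then the unit map $\Def_*^\QQ(A)\to\Def_*^\PP(f^*A)$, which in the classical case $\A\to\C$ is the inclusion of Harrison into Hochschild homology---exactly the map the introduction calls $\HH_*(f)$ and which the HKR construction is meant to \emph{promote}, not the HKR map itself.

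The paper's $\Omega^*_{\FF,A}$ is defined differently: it is $\YY(V)\otimes\QQ(V)$, obtained by applying $\YY$ \emph{relative to} $Q=(\QQ(V),d)$ to the $\QQ$-cotangent complex $\Def_*^\QQ(A,A)=(V\otimes\QQ(V),\delta_1')$, with differential induced only by $Q$ and by $\FF$. Your perturbation step---lifting the $\QQ$-resolution of $A$ to a $\PP$-resolution via $\FF$---is in fact the same device the paper uses, but it is used there to present the \emph{source} $\Def_*(f^*A)$ as $(\YY(V)\otimes\QQ(V),\delta_1)$; the HKR map is then the identification of this with $\Omega^*_{\FF,A}$. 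A smaller discrepancy: throughout the paper $\Def_*(A)$ means $\Def_*(A,A)$, so it is modelled by $V\otimes\QQ(V)$ rather than by the bare indecomposables $V$.
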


There is a well developed theory of K\"ahler differentials 
$\Omega_{A}^1$ for operadic algebras~\cite{LV}, that is, 
algebraic $1$-forms, which we use to construct the module 
of differential forms $\Omega_{\FF,A}^*$. The construction 
$\Omega_{A}^1\longmapsto \Omega_{\FF,A}^*$ is not intrinsic to 
$\PP$-algebras but depends on the homotopical properties 
of the morphism $\PP\longrightarrow \QQ$ and, up to 
quasi-isomorphism, on a choice of resolution $\FF$ of $\QQ$, as we 
explain in Section~\ref{sec:morphism}. 
With this construction at hand, we consider the notion of 
smoothness in Section~\ref{sec:smooth} as a generalisation of one 
of the equivalent notions of smoothness in the commutative case, 
which we recall from the excellent monograph~\cite{Loday1992}.
%cite original def.
This allows us to make the following definition, central to our
paper:

\begin{definition*}
The map $f:\PP\longrightarrow \QQ$ has the 
\emph{Hochschild--Kostant--Rosenberg property}
if for every smooth $\QQ$-algebra~$A$, the map $\HKR_{\FF,A}$
is a quasi-isomorphism.
\end{definition*}
 
The main result of this paper is that the PBW property is,
in the sense made precise below, Koszul dual to the HKR property,
as we record in Corollary \ref{cor:PBW}. We cannot avoid to note
this follows the `mantra' pursued by B. Ward in~\cite{BenWard},
that it is desirable to consider Koszul duality not as an aspect 
of categories separately, but rather as a construction which 
intertwines functors between them. 

\begin{theorem*}
Let $f\colon \PP \longrightarrow \QQ$ be a map of Koszul operads. Then $f$ has the HKR property if the morphism of Koszul dual 
operads $\QQ^! \longrightarrow \PP^!$ enjoys the PBW property.
\end{theorem*}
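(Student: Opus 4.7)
The plan is to reduce the HKR property, via the general criterion established earlier in the paper, to the construction of a diagonally pure quasi-free resolution of $\QQ$ as a left $\PP$-module, and then to produce such a resolution directly from the PBW data for the Koszul dual map. Concretely, by the first theorem of the introduction and the discussion preceding it, $f\colon\PP\to\QQ$ has the HKR property as soon as we exhibit a quasi-free resolution $\FF = (\PP\circ\YY, d)$ of $\QQ$ in left $\PP$-modules that is \emph{diagonally pure}. So the whole task reduces to building $\FF$ from the free decomposition of $\PP^!$ over $\QQ^!$ guaranteed by the PBW hypothesis.

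By the Dotsenko--Tamaroff characterisation of PBW recalled in the introduction, the hypothesis gives a symmetric sequence $X$ and an isomorphism $\PP^! \cong X \circ \QQ^!$ of right $\QQ^!$-modules. The key step is to transport this decomposition through Koszul duality so as to obtain an analogous statement about $\QQ$ in the category of left $\PP$-modules. I would begin with the Koszul twisting morphism $\kappa_\PP\colon \PP^\antishriek \to \PP$ and its restriction along $f^\antishriek\colon \QQ^\antishriek \to \PP^\antishriek$, and use the freeness $\PP^! \cong X \circ \QQ^!$ (equivalently, a cofreeness statement for $\PP^\antishriek$ as a right $\QQ^\antishriek$-comodule) to split off a generating symmetric sequence $\YY$, morally the Koszul dual of $X$ up to suspensions. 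Concretely, this yields $\FF = (\PP \circ \YY, d)$ with differential induced by $\kappa_\PP$ restricted through the decomposition, together with an augmentation $\FF \to \QQ$.

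Two verifications remain. First, $\FF$ is acyclic in positive degrees: this should follow by comparison with the classical Koszul resolution $(\PP \circ \PP^\antishriek, d_{\kappa_\PP})$ of $\PP$ and its restriction along $f^\antishriek$, using a filtration by weight and the Koszul property of both $\PP$ and $\QQ$ to collapse the associated spectral sequence. Second, $\FF$ is diagonally pure: this reduces to tracking the internal weight grading on $\YY$ inherited from $X$, and invoking the fact that in a free decomposition $\PP^! \cong X\circ\QQ^!$ of quadratic (hence internally graded) right $\QQ^!$-modules, the generators $X$ automatically sit in a single weight in each arity. This purity transfers through the Koszul duality to $\YY$ and establishes the required diagonality.

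The main obstacle I anticipate is the second verification: pinning down precisely how the weight grading on the free generators $X$ of $\PP^!$ over $\QQ^!$ translates into the diagonal concentration of the generating sequence $\YY$ of $\FF$, and ensuring that the differential $d$ respects this bigrading. Once this compatibility is established, the theorem follows by invoking the general HKR criterion of the first theorem of the introduction.
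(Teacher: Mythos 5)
Your overall strategy coincides with the paper's: reduce the HKR property to the existence of a diagonally pure quasi-free resolution $\FF=(\PP\circ\YY,d)$ of $\QQ$ in left $\PP$-modules (note that this reduction is the content of the separate ``operadic HKR theorem'' in the body, not of the first theorem of the introduction, which only \emph{constructs} the map $\HKR_{\FF,A}$), and then extract that resolution from the PBW hypothesis on the dual map. The paper packages the second step as a duality statement proved through $\Tor$ functors: $f$ is left Koszul iff $\Tor^\PP(\kk,\QQ)$ is concentrated on the diagonal; PBW for $\QQ^{\antishriek}\to\PP^{\antishriek}$ is equivalent to $\Tor^{\QQ^{\antishriek}}(\PP^{\antishriek},\kk)$ being concentrated in homological degree zero; and the two complexes $\PP^{\antishriek}\circ\QQ$ and $\PP^{!}\circ\QQ$ obtained from the Koszul resolutions of $\kk$ give a natural isomorphism $\Tor^{\QQ^{\antishriek}}_{j-i}(\PP^{\antishriek},\kk)_{(i)}\cong\Tor^{\PP}_{i}(\kk,\QQ)_{(j)}^{*}$.

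The genuine gap is in your second verification, precisely the step you flag as the main obstacle. Diagonal purity of $\YY$ does \emph{not} come from the generators $X$ of $\PP^{!}$ over $\QQ^{!}$ ``sitting in a single weight in each arity'': that claim is false in general (the paper explicitly allows non-binary generators, for which weight and arity are decoupled) and is in any case not the relevant mechanism. What matters is that freeness places $X=\Tor^{\QQ^{!}}_{0}(\PP^{!},\kk)$ entirely in homological degree zero, in arbitrary weights, and that Koszul duality exchanges homological degree and weight: the weight-$j$ component of $X$ corresponds to $\Tor^{\PP}_{j}(\kk,\QQ)_{(j)}$, which is exactly the diagonal concentration required, and the minimal-resolution construction then identifies $\YY$ with $\Tor^{\PP}(\kk,\QQ)$. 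Without this exchange (the paper's $\Tor$ duality lemma) your argument does not close. A secondary inaccuracy: $(\PP\circ\PP^{\antishriek},d_{\kappa_{\PP}})$ resolves the unit, not $\PP$; to compute $\Tor^{\PP}(\kk,\QQ)$ one uses the resolution $\PP^{\antishriek}\circ\PP\to\kk$ of $\kk$ in right $\PP$-modules and applies $-\circ_{\PP}\QQ$, whereas the resolution $\FF$ itself is produced abstractly by adjoining generators to kill cycles, minimality and purity then being read off from the $\Tor$ computation rather than verified directly on a hand-built differential.
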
 

This gives us a short and conceptual proof of the classical
HKR theorem: the maps $\mathsf{Ass}\longrightarrow \mathsf{Com}$ 
and $\Lie \longrightarrow \mathsf{Ass}$ are Koszul dual, so that
the classical PBW theorem implies, in this way, the HKR theorem. 
With generous hindsight, this comes as no surprise: apart from using standard
techniques of localisation to reduce the proof of the HKR
theorem to smooth local commutative algebras, 
a straightforward way to prove that the HKR theorem holds 
is by use of the Koszul complex of the symmetric algebra
$S(V)$ and its Koszul dual coalgebra $S^c(V[-1])$. 

Having settled the above, we then observe there 
are several examples of maps of operads satisfying the HKR property and, in Section \ref{sec:new HKR theorems}, we explore some of them. 
Of particular interest is the map $\mathsf{Perm}
\longrightarrow \mathsf{Com}$ which factors the projection
of the associative operad onto the commutative operad by passing
through permutative algebras~\cite{Chapo}. 
The Koszul dual map to the projection $\mathsf{Perm}
\longrightarrow \mathsf{Com}$ is the inclusion 
$\mathsf{Lie} \longrightarrow \mathsf{PreLie}$, which
is known to enjoy the PBW property by~\cite{PBW}. In 
Corollary~\ref{cor:perm} we conclude that the following
HKR-type theorem holds, providing us with 
the computation of the cotangent homology of a smooth commutative algebra $A$ seen as a permutative algebra. 

\begin{theorem*}
The permutative cotangent homology of a smooth
commutative algebra $A$ is given by a module
$\mathsf{RT}_{\neq 1}(\Omega_A^*)$ which is spanned 
by rooted trees whose vertices are labeled 
by elements of the classical space of
forms $\Omega_A^*$
and no vertex has exactly one child.
\end{theorem*}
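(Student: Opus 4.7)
The plan is to specialise Corollary~\ref{cor:PBW} to the projection $f\colon \Perm \longrightarrow \Com$. Its Koszul dual morphism is the canonical inclusion $\Lie \hookrightarrow \PreLie$, which enjoys the PBW property because $\PreLie$ is free as a right $\Lie$-module by the main theorem of~\cite{PBW}. Invoking Corollary~\ref{cor:PBW} then yields that $f$ has the HKR property, so that for every smooth commutative algebra $A$ the comparison map $\HKR_{\FF,A}\colon\Def_*^\Com(A) \longrightarrow \Omega^*_{\FF,A}$ is a quasi-isomorphism whenever $\FF = (\Perm\circ\YY,d)$ is a diagonally pure quasi-free resolution of $\Com$ as a left $\Perm$-module. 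The remaining work is to exhibit such an $\FF$ and to compute the resulting complex of forms.

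To construct $\FF$, I would read off the generators $\YY$ from the Koszul dual picture. The freeness of $\PreLie$ over $\Lie$ provides a basis indexed by a family of rooted trees encoding a PBW complement of $\Lie \subset \PreLie$; transporting this basis through the Koszul duality pairing between the pair $(\Perm,\Com)$ and the pair $(\Lie,\PreLie)$ produces a graded symmetric sequence $\YY$ indexing the generators of a left $\Perm$-module resolution of $\Com$. Combinatorially, one expects Koszul duality to read the unary vertices of a rooted tree as the slots occupied by the Lie bracket inside $\PreLie$, so that removing these identifies $\YY$ with $\mathsf{RT}_{\neq 1}$. The acyclicity of $(\Perm\circ\YY,d)$ and its identification with $\Com$ may then be verified via the relative bar--cobar construction associated to the morphism of Koszul operads $f$.

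Finally, I would evaluate the construction of Section~\ref{sec:morphism} to identify $\Omega^*_{\FF,A}$ explicitly. The classical HKR theorem identifies the commutative cotangent complex of a smooth $A$ with the exterior algebra $\Omega_A^*$ of Kähler differentials, and the commutative product on $A$ feeds this identification into the forms functor so that each vertex of a tree in $\YY$ may carry an arbitrary commutative form rather than just a single $1$-form; this gives precisely the module $\mathsf{RT}_{\neq 1}(\Omega_A^*)$ of the statement. The main obstacle in implementing the plan is the combinatorial step in the middle paragraph: one must trace how the right $\Lie$-module freeness of $\PreLie$ dualises into the desired left $\Perm$-module resolution of $\Com$, and explain precisely why this dualisation selects the rooted trees with no unary vertices. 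Once this is pinned down, the rest of the argument is a verification that the evaluation of $\YY$ on the classical forms of a smooth commutative $A$ assembles into the claimed tree module.
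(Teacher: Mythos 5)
Your overall strategy coincides with the paper's: the Koszul dual of $\Perm\longrightarrow\C$ is $\Lie\longrightarrow\preLie$, which is PBW because $\preLie$ is a free right $\Lie$-module by~\cite{PBW}, and Corollary~\ref{cor:PBW} then gives the HKR property. The genuine gap is the middle step, and the mechanism you propose for closing it would not work. The duality lemma behind Theorem~\ref{thm:duality} identifies $\Tor^{\Perm}_i(\kk,\C)_{(j)}$ with the linear dual of $\Tor^{\C^{\antishriek}}_{j-i}(\Perm^{\antishriek},\kk)_{(i)}$ arity by arity; it therefore transports the symmetric sequence of right-module generators of $\preLie$ over $\Lie$ \emph{wholesale} (up to dualisation and the diagonal degree shift) to the generating sequence $\YY$ of the left $\Perm$-module resolution of $\C$. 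It does not perform any surgery on individual trees, so the condition ``no vertex has exactly one child'' cannot arise from ``reading unary vertices as slots for the Lie bracket and removing them''. That condition is a property of the generators themselves: the external input you are missing is the computation of~\cite{dotsenko2020schur}, cited in the paper, which shows that the right $\Lie$-module generators of $\preLie$ are exactly $\mathsf{RT}_{\neq 1}$. Without that input your plan does not determine $\YY$.

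Two smaller points. First, your proposed verification of the acyclicity of $(\Perm\circ\YY,d)$ via a relative bar--cobar construction is unnecessary: Theorem~\ref{thm:duality} already produces the diagonally pure resolution from the PBW property of the dual map, and the paper never constructs $\FF$ explicitly. Second, in your last paragraph you invoke ``the classical HKR theorem'' to say that the commutative cotangent complex of a smooth $A$ is $\Omega_A^*$; in fact smoothness gives that $\Def_*^{\C}(A)$ has homology $\Omega_A^1$ concentrated in degree zero (it is the \emph{associative} cotangent complex that yields $\Omega_A^*$). The appearance of $\Omega_A^*$ at the vertices comes from evaluating the functor $\YY=\mathsf{RT}_{\neq 1}$ on $\Omega_A^1$ relative to the algebra $A$, via Theorem~\ref{thm:morphism} and Proposition~\ref{prop:forms}, as in Corollary~\ref{cor:perm}, not from feeding the classical HKR isomorphism into the construction.
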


Finally, we offer a technique to compute
the tangent cohomology of a $\PP$ algebra coming from 
a smooth $\QQ$-algebra under the projection 
$f:\PP\longrightarrow \QQ$ in case $\PP$ is obtained
from a filtered distributive law~\cite{FilteredLaw}
between
$\QQ$ and $\mathcal R$ as originally defined by V. Dotsenko 
in~\cite{Filtered}. The shining example of this phenomenon
is the way in which the operad $\A$ is obtained from $\C$ and
$\mathsf{Lie}$; in this way, the reader may think of the
following \emph{filtered HKR theorem} 
as another `ultimate' generalization to
algebraic operads of the classical HKR theorem for the
map $\A\longrightarrow \C$. Indeed, in this
case, the functor $\mathcal R^\antishriek$
below is precisely $V\longmapsto S^c(V[-1])[1]$.

\begin{theorem*}
Suppose $\PP$ is obtained from Koszul operads $\QQ$ and $\mathcal R$ by a filtered
distributive law, so that $\PP$ is isomorphic to $\QQ\circ\mathcal R$
as a right $\mathcal R$-module. Then for every smooth $\QQ$-algebra $A$
the cotangent homology of $f^*A$ is given by the endofunctor
\[ A\longrightarrow 	\mathcal R^{\emph\antishriek}(\Omega_A^1)\]
of ``$\mathcal R^{\emph\antishriek}$-enriched differential forms'' on $A$.
\end{theorem*}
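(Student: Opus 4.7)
The plan is to produce a quasi-free resolution of $\QQ$ as a left $\PP$-module of the form $\FF = (\PP\circ\mathcal R^{\antishriek}, d)$, feed it into the construction of the first theorem of the introduction so as to identify the candidate module of differential forms as $\mathcal R^{\antishriek}(\Omega_A^1)$, and then invoke the Koszul duality recorded in Corollary~\ref{cor:PBW} to conclude that the resulting morphism $\HKR_{\FF,A}$ is a quasi-isomorphism on smooth $\QQ$-algebras.

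For the construction of $\FF$, I would start from the Koszul complex of $\mathcal R$, which, since $\mathcal R$ is Koszul, produces a quasi-free replacement of the unit symmetric sequence $I$ (viewed as a right $\mathcal R$-module via the augmentation) of the form $(\mathcal R^{\antishriek} \circ \mathcal R, d_K) \xrightarrow{\simeq} I$. The filtered distributive law endows $\PP$ with a right action of $\mathcal R$ compatible with the left $\PP$-action, and the induced isomorphism $\PP \cong \QQ\circ\mathcal R$ of right $\mathcal R$-modules shows that tensoring the Koszul complex on the left with $\PP$ over $\mathcal R$ kills the right $\mathcal R$-factor and leaves $\QQ$ behind. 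This produces $(\PP\circ\mathcal R^{\antishriek}, d) \longrightarrow \QQ$ as a quasi-free resolution in left $\PP$-modules with generating symmetric sequence $\YY = \mathcal R^{\antishriek}$. Specialising the functor $\Omega_{\FF,A}^{*}$ of the first theorem to this choice of resolution yields, for a smooth $\QQ$-algebra $A$, precisely $\mathcal R^{\antishriek}(\Omega_{A}^{1})$.

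To upgrade $\HKR_{\FF,A}$ to a quasi-isomorphism, I would apply Corollary~\ref{cor:PBW}: it suffices to show that the Koszul dual map $\QQ^{!} \longrightarrow \PP^{!}$ enjoys the PBW property. This in turn follows from showing that a filtered distributive law between the Koszul operads $\QQ$ and $\mathcal R$ Koszul-dualises to a filtered distributive law between $\mathcal R^{!}$ and $\QQ^{!}$, exhibiting $\PP^{!}$ as a free right $\QQ^{!}$-module generated by $\mathcal R^{!}$. This is entirely parallel to the classical case, where the self-duality of $\A \cong \C\circ\Lie$ simultaneously encodes the PBW theorem for $\Lie \hookrightarrow \A$ and the HKR theorem for $\A \twoheadrightarrow \C$.

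The main technical obstacle I anticipate is verifying that Koszul duality intertwines the filtered distributive law construction of~\cite{FilteredLaw} on the pair $(\QQ,\mathcal R)$ with a dual such construction on $(\mathcal R^{!}, \QQ^{!})$. In the strict (unfiltered) case this amounts to dualising the quadratic relations defining the distributive law, but in the filtered case one must carefully track the non-quadratic corrections and how they interact with the bar--cobar construction. Once this duality is in place, both the PBW property of the dual map and the compatibility of the differential on $\PP\circ\mathcal R^{\antishriek}$ with the left $\PP$-action follow by transporting the strict results along the filtration, and the remaining verifications (signs, functoriality, and the identification of $\Def_{*}^{\QQ}(A)$ with $\Omega_{A}^{1}$ under smoothness) are routine.
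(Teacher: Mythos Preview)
Your approach is correct and aligns with the paper's, whose proof is one line: it cites Theorem~5.4 of~\cite{FilteredLaw} for the fact that $\PP^!$ is free as a right $\QQ^!$-module on $\mathcal R^!$, and then invokes Corollary~\ref{cor:PBW}. In particular, the ``main technical obstacle'' you anticipate---that filtered distributive laws Koszul-dualise---is precisely that cited result, so there is nothing new to prove there.

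Your direct construction of the resolution $(\PP\circ\mathcal R^{\antishriek},d)$ is more than the paper does, and in fact renders your subsequent appeal to Corollary~\ref{cor:PBW} redundant: since $\mathcal R$ is Koszul, the generating sequence $\mathcal R^{\antishriek}$ is diagonally pure, so the map $f$ is left Koszul by definition and Theorem~\ref{thm:morphism} gives the HKR property directly, without passing through the dual PBW statement. The paper instead never writes down the resolution explicitly; it lets the duality lemma preceding Theorem~\ref{thm:duality} identify $\YY$ with $\mathcal R^{\antishriek}$ once the dual freeness is known. A small correction to your construction: to tensor with the right $\mathcal R$-module $\PP$ over $\mathcal R$ you need the \emph{left} $\mathcal R$-module Koszul complex $\mathcal R\circ\mathcal R^{\antishriek}$, not $\mathcal R^{\antishriek}\circ\mathcal R$.
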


 Dually to the result of homology, we were
 able to obtain a result for tangent
 cohomology. In this case, a choice of
 quasi-free resolution $\FF$
 gives us a functor of ``poly-vector fields''
 ${A\longmapsto \Tpoly A}$, and we obtain the
 following:
 
\begin{theorem*} 
	If $f$ satisfies the HKR property	then for every smooth 
	$\QQ$-algebra $A$ there is a quasi-isomorphism of complexes:
	\[ 
		\HKR^A :  
			\Def^*(f^*A) \longrightarrow\Tpoly A 
				.
								\] 
\end{theorem*}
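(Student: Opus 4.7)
The strategy is to derive $\HKR^A$ by $A$-linear duality from the cotangent HKR quasi-isomorphism provided by the HKR property, using the smoothness of $A$ to ensure that dualisation preserves quasi-isomorphisms. I would first build a model of $\Def^*(f^*A)$ directly from the resolution $\FF = (\PP\circ\YY,d)$: the quasi-freeness of $\FF$ over $\PP$ together with the $\QQ$-action on $A$ yields, by operadic transfer along $f$, a cofibrant replacement of $f^*A$ as a $\PP$-algebra whose module of indecomposables is built from $\YY$ and $A$. Taking $\PP$-derivations of this replacement with values in $A$ produces a complex that I would identify with $\Hom_\Sigma(\YY, \End_A)=\Tpoly{A}$; the natural projection from the tangent complex to this model is exactly the candidate map $\HKR^A$.

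Next, I would compare this map with the cotangent HKR quasi-isomorphism $\HKR_{\FF,A}\colon\Def_*^\QQ(A)\to\Omega_{\FF,A}^*$ by applying the functor $\Hom_A(-,A)$. By construction, $\Omega_{\FF,A}^*$ is obtained functorially from $\YY$ and the K\"ahler differentials $\Omega_A^1$, so $\Hom_A(\Omega_{\FF,A}^*,A)$ is canonically $\Tpoly{A}$. Smoothness of $A$ ensures that $\Omega_A^1$ is projective over $A$, whence $\Omega_{\FF,A}^*$ is degreewise projective; the functor $\Hom_A(-,A)$ therefore preserves the quasi-isomorphism $\HKR_{\FF,A}$, and a parallel unwinding identifies $\Hom_A(\Def_*^\QQ(A),A)$ with the model of $\Def^*(f^*A)$ from the previous step.

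The main obstacle is precisely this last identification. While the domain of $\HKR_{\FF,A}$ is defined in terms of the $\QQ$-structure on $A$, its $A$-linear dual must be recognised as the $\PP$-tangent complex of $f^*A$, which involves the $\PP$-structure only through $f$. Making this explicit requires carefully tracking the operadic transfer along $f$ and verifying that the quasi-free resolution $\FF$ interpolates the two sides in the expected way, so that the cofibrant $\PP$-algebra resolution of $f^*A$ produced in the first step is the correct dual object to $\Omega_{\FF,A}^*$ paired against $A$. Once this bookkeeping is carried out, the theorem follows essentially formally from the HKR property together with the projectivity of $\Omega_A^1$ guaranteed by smoothness.
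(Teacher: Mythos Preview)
Your dualisation strategy runs into a genuine obstacle that the paper itself flags. Immediately after Proposition~\ref{prop:natHKR} the authors remark that, unlike the commutative case, neither the source $\Def_*(f^*A)$ nor the target $\Omega_{\FF,A}^*$ of the homology HKR map carries a natural operadic $A$-module structure. The functor $\Hom_A(-,A)$ you propose to apply therefore has no meaning on these objects, and the identification ``$\Hom_A(\Omega_{\FF,A}^*,A)\cong\Tpoly{A}$'' is not available. There is also a slip in your description of the map: the domain of the homology HKR morphism is the $\PP$-cotangent complex $\Def_*(f^*A)$, not $\Def_*^\QQ(A)$, so even if an $A$-linear dual existed it would not unwind to the $\QQ$-side object you want. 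The projectivity of $\Omega_A^1$ you invoke is projectivity as a $U_\QQ A$-module, which does not transfer to the $\PP$-side complexes built from $\YY$.

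The paper does not attempt to dualise. It reruns the homology argument directly for cohomology: first a computation for cobar algebras $A=\Omega_\psi(V)$ using the PBW property of $g^!$ (so the working hypothesis is really ``left Koszul'' rather than the bare HKR property), and then a passage to arbitrary smooth $A$ via a zig-zag through mixed coefficients. Because $\Def^*$ is contravariant in the algebra and covariant in the module, the paper introduces an auxiliary complex $\Tpoly{Q,A}=\hom(\YY(V),A)$ and forms the diagram with rows $\Def^*(f^*Q,f^*Q)\to\Def^*(f^*Q,f^*A)\leftarrow\Def^*(f^*A,f^*A)$ and $\Tpoly{Q}\to\Tpoly{Q,A}\leftarrow\Tpoly{A}$; smoothness makes the zig-zag maps quasi-isomorphisms via exactness of $\hom_{UQ}(\Omega_Q^1,-)$, and the cobar case handles the top row. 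This replaces your projectivity-plus-duality idea with a direct coefficient-change argument.
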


In case of cohomology, 
our result on filtered distributive
laws says that the tangent cohomology of $f^*A$ 
 is given by the endofunctor $\mathcal R^!(\Der(A)[1])[-1]$. In the classical
 case, we recover the Lie structure on tangent homology, since 
 $\Der(A)$ is a Lie algebra. Indeed, since $\mathcal R^!=\C$, 
 the usual distributive law allows us to give 
 $\mathcal R^!(\Der(A)[1])[-1]$
 a Lie algebra structure isomorphic to the one on $\HH^*_\A(A)$. It is unclear, however, how
 one could attempt to obtain the Lie algebra
 structure on tangent cohomology in a more
 general situation. 
 \medskip
 
\textbf{Structure.} The paper is organised as follows.
In Section \ref{sec:com HKR} we 
recall the usual HKR theorem for
smooth commutative algebras, in a 
way that suits our operadic approach 
that follows, and hoping that it 
will be useful for the reader to
incorporate the new formalism
that we then develop in
Section~\ref{sec:main}.
Here, we recall the notions of 
(co)tangent (co)homology and 
introduce the relevant notions
of smoothness and the ``full'' module of 
differential forms. With this at
hand, we introduce the HKR property 
and prove our main theorem.
In Section~\ref{sec:new HKR theorems}, 
where we focus on applications, we 
show how to recover the classical 
HKR theorem from our main result and 
apply it to obtain new examples:
we obtain a ``permutative'' HKR
theorem for smooth commutative algebras
and a ``pre-Lie'' HKR theorem for
smooth braces algebras. In the process of
drawing some connections of our work 
to that of J.~Griffin~\cite{Griffin},
we obtain an HKR theorem for operads
obtained from filtered distributive
laws, and briefly outline how it recovers
the HKR isomorphism at the level of
Lie algebras. 
Finally,  with the purpose of making
this paper better self-contained, 
we collect some useful results in
an Appendix about algebraic operads,
their algebras and their K\"ahler 
differentials, hoping it will be
of use for a reader with some 
background in algebraic operads.

\medskip

\textbf{Notation and conventions.} For references
on operads and their modules we point the reader to~\cite{Fresse,LV},
and to~\cite{Weibel,CarEil,Loday1992} for homological algebra. We allow 
operads to be homologically graded, but will make it clear when we 
require operads to be dg. We assume that algebras over operads 
are non-dg, and we fix a closed symmetric monoidal category 
$\mathsf{C}$ like $\mathsf{Vect}$ over which our algebras
are defined; we always work over a field of zero characteristic. 
Most arguments we make actually hold for dg algebras, taking 
into the account the given bigrading of the resulting objects.
For simplicity, we work with non-dg algebras. We write $\#$ 
for the forgetful functor from algebras to~$\mathsf{C}$.
If $V$ is a chain complex and $p\in\ZZ$, we write $V[p]$ for the 
chain complex for which $V[p]_n = V_{n-p}$ for each $n\in\ZZ$, and 
whose differential changes sign according to the parity of $p$. 
Accordingly, if $\QQ$ is an operad, we write $\QQ\{p\}$ for the
operad uniquely defined by the condition that a $\QQ\{p\}$-algebra 
structure on $V$ is the same as a $\QQ$-algebra structure on $V[p]$. 

Throughout, for two quadratic
operads given by quadratic data
$(V,R)$ and $(V',R')$, we will
\emph{only} consider maps of operads
induced by a map of quadratic
data $V\longrightarrow V'$
such that the induced map
$\mathcal T(V)\longrightarrow 
	\mathcal{T}(V')$
	sends $(R)$ to $(R')$.
We remind the reader that the
data $(V,R)$ may contain \emph{non-binary} generators and that, in
this case, the weight and arity
gradings in $\mathcal T(V)$ may not
coincide; see~\cite{LV}*{\S 7.1.3}. Moreover, we 
confine ourselves to the category of weight graded operads
and their weight graded algebras and modules. 
We distinguish the weight degree from the homological
degree by using parentheses. Hence, while $\XX_3$ denotes
a  component of homological degree $3$, we write
$\XX_{(3)}$ for a  component of weight degree $3$.

\medskip

\textbf{Acknowledgements.} 
We kindly thank B. Keller for explaining to us the very
short proof of Lemma~\ref{lema:BK} which we reproduced here. 
We also thank V. Dotsenko, J. Bellier-Mill\'es,
N. Combe and J. Nuiten  for useful
conversations, comments and suggestions.

\section{The case of commutative algebras}\label{sec:com HKR}

This section serves to recall the objects and results related to the classical Hochschild--Kostant--Rosenberg theorem for commutative algebras. Such objects will be presented in the way that we find best suited for the operadic generalisation and the main results appearing in Section \ref{sec:main}. For a classical approach we recommend both Chapter 3 and Appendix~E~of~\cite{Loday1992}.

\subsection{The classical HKR morphism}

Throughout, fix a non-unital commutative
algebra $A$, and let us recall how to
construct a natural map that relates
the homology of $A$ as a commutative
algebra, its Harrison
homology, and the homology of $A$ as
an associative algebra, its Hochschild 
homology, through a particular functor. 
This is the well known 
\new{Hochschild--Kostant--Rosenberg map}
\[ \HKR_A : C^*(A,A) \longrightarrow 
	\Omega_A^*\]
where the left hand side is the cyclic
Hochschild complex of $A$ considered
as an associative algebra and $\Omega_A^*$ is the space of \new{differential forms} on $A$. We now recall the details
necessary to construct this map.

For any commutative algebra $A$, the
module of \new{K\"ahler differentials} 
$\Omega_A^1$ of $A$ is the symmetric
$A$-bimodule representing the
functor of derivations 
\[ M\longmapsto \Der(A,M).\]
Recall that we have a natural 
isomorphism of symmetric $A$-bimodules
\[ I/I^2\longrightarrow \Omega_A^1,\quad \text{where}
	\quad I = \ker(\mu:A\otimes A\longrightarrow A)\]
such that $1\otimes x-x\otimes 1 +I^2\longmapsto dx$. 

\begin{definition}
Let be $J$ the kernel of the multiplication of a cofibrant 
replacement $\cof A$ of $A$. The \new{cotangent complex} 
of $A$ with coefficients in a symmetric $A$-bimodule $M$
is by definition 
 \[\Def_*(A,M) = J/J^2\otimes_{\cof A} M.\] 
The cotangent homology of $A$ with coefficients in
$M$ is, by definition, the homology of this complex,
and we write it $\HH_*(A,M)$. 
\end{definition}

In other words, this is the non-abelian 
derived functor of $M\longmapsto \Omega_A^1\otimes_A M$.
Dually, we have a \new{tangent complex}
of $A$ with values in $M$
\[\Def^*(A,M) = \hom_A(J/J^2,M)\]
and the tangent cohomology of $A$ with values in
$M$ is, by definition, the homology of this complex,
and we write it $\HH^*(A,M)$. 

\begin{definition} We say $A$ is a
\new{smooth commutative algebra} if
for every $A$-module $M$,
\[ \HH^1(A,M) = 0. \] 
\end{definition}

For our convenience and that of the
reader, we record now some equivalent
definitions of smoothness, which in
particular show
that the cotangent homology
$\HH_*(A,A)$ of $A$ is very simple
in case it is smooth:
it is concentrated in degree zero
where it equals the module
of K\"ahler differentials
$\Omega_A^1$.  

%\textit{Remark.} One can relax the 
%hypothesis on the field $\kk$
%to work instead with flat algebras 
%over a commutative Noetherian ring, 
%but we will not concern ourselves 
%with this here. 

We remind the 
reader from the Appendix that
one can also consider relative versions
of the homology and cohomology theories 
above for a morphism of algebras. 
In particular, since $A$ is a commutative algebra,
we can consider the (co)homology of $A$
relative to $A\otimes A$ through the
multiplication map.  
\begin{prop}\label{thm:variants of smoothness}
Let $A$ be a finitely generated commutative
algebra over a field of characteristic zero,
and let $B=A\otimes A$.	
Then the following conditions are equivalent:
\begin{tenumerate}
\item $\HH^1(A,M)=0$ for any symmetric
$A$-bimodule $M$,
\item $\HH_1(A,A) =0$ and $\Omega_A^1$ is
a projective $A$-module,
\item $\HH^2(A\vert B,N) = 0$ for any $A$-module $N$,
\item $\HH_2(A\vert B,A) = 0 $ and $\Omega_A^1$ is
a projective $A$-module. 
\end{tenumerate}
\end{prop}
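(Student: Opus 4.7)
The plan is to establish the ``vertical'' equivalences $(1) \Leftrightarrow (2)$ and $(3) \Leftrightarrow (4)$ via universal coefficients, and the ``horizontal'' equivalences $(1) \Leftrightarrow (3)$ and $(2) \Leftrightarrow (4)$ via an identification $\Def_*(A|B,-) \simeq \Def_{*-1}(A,-)$ that comes out of the fundamental cotangent triangle attached to $k \to B \to A$.

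For the horizontal equivalences: since $B = A \otimes_k A$ is the tensor product of two copies of $A$ over a field, a K\"unneth decomposition gives a natural splitting of the cotangent complex of $B$, up to quasi-isomorphism, as $(\Def_*(A,A) \otimes_k A) \oplus (A \otimes_k \Def_*(A,A))$. Base-changing along $\mu \colon B \to A$ collapses each summand to $\Def_*(A,A)$, and the map into $\Def_*(A,A)$ appearing in the fundamental triangle is the sum of the two projections. Since this map is split surjective with kernel isomorphic to $\Def_*(A,A)$, its cofiber identifies $\Def_*(A|B,-)$ with $\Def_*(A,-)$ shifted by one, yielding natural isomorphisms $\HH_n(A|B,M) \cong \HH_{n-1}(A,M)$ and $\HH^n(A|B,N) \cong \HH^{n-1}(A,N)$ for every $A$-module. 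From these, the equivalences $(1) \Leftrightarrow (3)$ and $(2) \Leftrightarrow (4)$ are immediate.

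For $(1) \Leftrightarrow (2)$: since $J/J^2$ is a non-negatively graded complex of free $\cof A$-modules, after base change along $\cof A \to A$ it becomes a complex of free $A$-modules computing $\Def_*(A,A)$, and a standard universal coefficient theorem provides a natural short exact sequence
\[ 0 \longrightarrow \Ext_A^1(\Omega_A^1, M) \longrightarrow \HH^1(A, M) \longrightarrow \Hom_A(\HH_1(A,A), M) \longrightarrow 0 \]
in $M$, using that $\HH_0(A,A) = \Omega_A^1$. Vanishing of $\HH^1(A,-)$ on every $M$ is then equivalent to the simultaneous vanishing of the two flanking groups for every $M$, which amounts precisely to projectivity of $\Omega_A^1$ together with $\HH_1(A,A) = 0$.

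The principal hurdle, I expect, is checking that the K\"unneth decomposition of the cotangent complex of $B$ holds at the derived level used to compute $\Def_*(A|B,-)$; once this identification is secured, the rest of the argument is a formal manipulation of distinguished triangles and a routine universal coefficient exercise. As these equivalences are classical (cf.\ \cite{Loday1992}*{Appendix E}), one may also simply cite them after pointing out that the notation matches.
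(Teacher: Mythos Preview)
The paper does not prove this proposition: its entire proof is the single line ``See~\cite{Loday1992}*{Appendix E}.'' Your closing sentence anticipates exactly this.

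Your sketch goes well beyond what the paper offers and the strategy is sound. Two remarks. First, the ``short exact sequence'' you write down is really only the start of the five-term exact sequence coming from the universal-coefficients spectral sequence $E_2^{p,q}=\Ext^p_A(\HH_q(A,A),M)\Rightarrow \HH^{p+q}(A,M)$ that the paper itself records; over a general ring the rightmost map need not be surjective, so what you actually have is
\[ 0 \longrightarrow \Ext^1_A(\Omega_A^1,M) \longrightarrow \HH^1(A,M) \longrightarrow \Hom_A(\HH_1(A,A),M) \longrightarrow \Ext^2_A(\Omega_A^1,M) \longrightarrow \cdots \]
This does not damage the argument: from $\HH^1(A,M)=0$ for all $M$ the injection forces $\Omega_A^1$ to be projective, whence $\Ext^{\geqslant 1}_A(\Omega_A^1,-)=0$ and the sequence collapses to give $\Hom_A(\HH_1(A,A),M)=0$ for all $M$; the converse is immediate from the same sequence. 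Second, the identification $\Def_*(A\vert B,-)\simeq \Def_{*-1}(A,-)$ via the Jacobi--Zariski triangle and the K\"unneth splitting of the cotangent complex of $B=A\otimes A$ is correct and standard; the one point worth making explicit is that the comparison map in the triangle is the fold map because $\mu\circ i_1=\mu\circ i_2=\id_A$. Since the paper is content to cite Loday, you are on safe ground either way.
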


\begin{proof}
See~\cite{Loday1992}*{Appendix E}.
\end{proof}

{{}} As we noted, 
if $A$ is a smooth commutative algebra,
the fact $\Omega_A^1$ is projective,
implies that $\HH_*(A,A)$ is 
concentrated in degree zero and
\[ \HH_0(A,A) = \Omega_A^1. \]
This receives a map $A\longrightarrow 
\Omega_A^1$, the \new{universal
derivation}, and we can then
form the non-unital
symmetric algebra $S_A(\Omega_A^1[-1])$
of $\Omega_A^1$ under $A$. 
We call this the space of
differential forms on $A$
and write it $\Omega_{A}^*$. 
 Finally, let us recall
that the \new{cyclic Hochschild complex} 
$C_*(A,A)$ of $A$ is given for each
$n\in\NN$ by
$C_n(A,A) = A\otimes \overline{A}^{\tt n}$
and we write a generic element in
here by $a[a_1\vert\cdots\vert a_n]$.
There is a map of complexes
\[ \HKR_A : C_*(A,A) \longrightarrow \Omega_A^* \]
such that $a[a_1\vert\cdots\vert a_n] \longmapsto ada_1\cdots da_n$. It will be useful to note
that this map is the identity of $A$ in degree $0$,
and in fact split as a map of complexes as a sum of
this map and the remaining part 
\[ C_*(A,A)^+ \longrightarrow \overline{S}_A(\Omega_A^1).\]
where the right hand side uses the \emph{non-unital}
symmetric algebra functor under $A$. Over a field of characteristic zero, the HKR
map is a split injection. The \new{Hochschild--Kostant--Rosenberg theorem}~\cite{HKR} asserts the following stronger conclusion in case $A$ is smooth:

\begin{theorem}\label{thm:classical HKR}
For every smooth commutative algebra $A$
of finite type over $\kk$ 
the morphism
\[ \HKR_A : C_*(A,A) 
	\longrightarrow \Omega_A^*\]
is a quasi-isomorphism.  \qed
\end{theorem}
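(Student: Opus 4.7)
The plan is to follow the classical two-step strategy: reduce to the local case via localisation, and then compute both sides on a regular local algebra using a Koszul resolution.

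First I would note that both $C_*(A,A)$ and $\Omega_A^*$ are compatible with localisation: for any multiplicative $S \subseteq A$, the canonical maps $S^{-1}C_*(A,A) \longrightarrow C_*(S^{-1}A,S^{-1}A)$ and $S^{-1}\Omega_A^* \longrightarrow \Omega_{S^{-1}A}^*$ are quasi-isomorphisms, both natural with respect to $\HKR_A$. Since quasi-isomorphism between $A$-linear complexes can be tested locally at maximal ideals, it suffices to prove the theorem for each $A_{\mathfrak m}$. For smooth $A$ of finite type, each $A_{\mathfrak m}$ is a regular local $\kk$-algebra, and by Proposition~\ref{thm:variants of smoothness} its module of Kähler differentials is free of finite rank equal to the Krull dimension.

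Writing $J$ for the kernel of the multiplication $\mu : A_{\mathfrak m}^e \longrightarrow A_{\mathfrak m}$, one has $J/J^2 \simeq \Omega^1_{A_{\mathfrak m}}$, and regularity implies that $J$ is generated by the regular sequence $y_i = 1 \otimes x_i - x_i \otimes 1$ arising from any regular system of parameters $\{x_i\}$. The associated Koszul complex
\[ K_\bullet = A_{\mathfrak m}^e \otimes \Lambda^\bullet\langle e_1,\ldots,e_n\rangle \longrightarrow A_{\mathfrak m}, \qquad d(e_i) = y_i, \]
is then a free resolution of $A_{\mathfrak m}$ as an $A_{\mathfrak m}^e$-module. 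Tensoring $K_\bullet$ with $A_{\mathfrak m}$ over $A_{\mathfrak m}^e$ collapses the differential and yields precisely $\Omega_{A_{\mathfrak m}}^*$. Since the bar complex $\Bar(A_{\mathfrak m})$ is another free resolution of $A_{\mathfrak m}$ over $A_{\mathfrak m}^e$, both compute $\Tor^{A_{\mathfrak m}^e}_*(A_{\mathfrak m},A_{\mathfrak m})$, and so the two resolutions are linked by a canonical pair of comparison quasi-isomorphisms.

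The main obstacle is the final identification: one must verify that the induced quasi-isomorphism $C_*(A_{\mathfrak m},A_{\mathfrak m}) \longrightarrow \Omega_{A_{\mathfrak m}}^*$ coincides, up to chain homotopy, with the explicit HKR formula $a[a_1|\cdots|a_n] \longmapsto a\, da_1\cdots da_n$ rather than some other quasi-isomorphism. Concretely, one constructs a lift $\Bar(A_{\mathfrak m}) \longrightarrow K_\bullet$ by (skew-)antisymmetrising bar tensors, and checks that tensoring with $A_{\mathfrak m}$ over $A_{\mathfrak m}^e$ yields exactly $\HKR_A$. This is the step where the characteristic-zero hypothesis enters essentially, through the factors $\frac{1}{p!}$ appearing in antisymmetrisation, which are needed to match the symmetric model $S_{A_{\mathfrak m}}(\Omega^1_{A_{\mathfrak m}}[-1])$ for differential forms. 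Once this is in place, passing to homology completes the proof.
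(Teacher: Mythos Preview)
Your argument is correct and is essentially the classical proof (as in Loday's monograph or the original HKR paper): localise to reduce to a regular local ring, resolve $A_{\mathfrak m}$ over $A_{\mathfrak m}^e$ by the Koszul complex on $1\otimes x_i - x_i\otimes 1$, and identify the comparison map with the antisymmetrisation formula. The paper, however, deliberately takes a different route, one designed to foreshadow the operadic formalism of Section~\ref{sec:main}. Rather than localising, it works globally with a cofibrant replacement $Q\longrightarrow A$: first it shows $\HKR_Q$ is a quasi-isomorphism for any cobar algebra $Q=\mathcal C(\g)$ by exploiting the Com/Lie Koszul duality and the PBW theorem for $\mathscr U^c$ (Lemma~\ref{lem:fundamental lemma} in embryo), and then uses smoothness only to conclude that $\Omega_Q^*\longrightarrow \Omega_A^*$ is a quasi-isomorphism, via the projectivity of $\Omega_A^1$. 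Your approach is more elementary and self-contained, and avoids any model-categorical input; the paper's approach buys a template that survives verbatim when $\A\longrightarrow\C$ is replaced by an arbitrary left Koszul map $\PP\longrightarrow\QQ$, where no analogue of localisation at maximal ideals is available. Indeed, the introduction explicitly flags your localisation-plus-Koszul-complex strategy as the ``standard'' one that the non-standard proof is meant to reinterpret.
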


\subsection{The HKR isomorphism}
{{}} There are many proofs of 
Theorem~\ref{thm:classical HKR} in the literature. Because it serves
to illustrate the general formalism that we will
develop later, let us
give a non-standard
proof of this theorem: it will follow once
we show that (in the dg setting), the HKR
map is a quasi-isomorphism for cofibrant
algebras that resolve smooth 
algebras. 

{{}} 
To see why this is enough,
observe that $\HKR_A$ is natural, 
in the sense 
that given a map of algebras $f\colon B 
\longrightarrow A$ we have that 
$\HH_*(f)\circ \HKR_B = 
		\HKR_A \circ\, \Omega^*_f$
		or, what is the same, there is a commutative diagram 
		$$
\begin{tikzcd}
 C_*(B,B)\arrow[d]\arrow[r] &
 	  C_*(A,A) \arrow[d] \\
  \Omega_B^*\arrow[r] & \Omega_A^*.
\end{tikzcd}
$$
Since the functor $\HH_*$, by its very definition,
preserves quasi-isomorphisms, and since
the $\HKR$ map is a quasi-isomorphism
for cofibrant algebras, we deduce the 
following interesting lemma. Its
content is central to develop
our operadic formalism later.

\begin{lemma}\label{cor:funcor}
The map $\HKR_A$ is a quasi-isomorphism
if and only if the functor of differential forms
$A\longmapsto \Omega_A^*$ preserves quasi-isomorphisms 
$Q\longrightarrow A$ for $Q$ a cofibrant resolution of 
an arbitrary smooth algebra~$A$. \qed
\end{lemma}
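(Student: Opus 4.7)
The plan is to exploit the naturality of the HKR map together with a two-out-of-three argument. Given a smooth commutative algebra $A$ and a cofibrant resolution $q\colon Q \longrightarrow A$, naturality of $\HKR$ produces the commutative square
\[
\begin{tikzcd}
C_*(Q,Q) \arrow[r, "C_*(q)"] \arrow[d, "\HKR_Q"'] & C_*(A,A) \arrow[d, "\HKR_A"] \\
\Omega_Q^* \arrow[r, "\Omega_q^*"'] & \Omega_A^*
\end{tikzcd}
\]
already exhibited in the paragraph preceding the lemma. My strategy is to show that two of the four arrows in this square are unconditionally quasi-isomorphisms, so that $\HKR_A$ is a quasi-isomorphism precisely when $\Omega_q^*$ is one.

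The top horizontal arrow $C_*(q)$ is a quasi-isomorphism because Hochschild homology, being by its very definition the non-abelian derived functor of $C_*$, sends quasi-isomorphisms of (dg) commutative algebras to quasi-isomorphisms of chain complexes. The left vertical arrow $\HKR_Q$ is a quasi-isomorphism by the standing hypothesis of this subsection, stated explicitly just above the lemma: in the dg setting the HKR map is a quasi-isomorphism whenever it is evaluated on a cofibrant algebra resolving a smooth one. With two of the edges of the square now identified as quasi-isomorphisms, the two-out-of-three property yields at once that $\HKR_A$ is a quasi-isomorphism if and only if $\Omega_q^*$ is.

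Letting $A$ range over smooth commutative algebras and $q\colon Q \longrightarrow A$ over cofibrant resolutions, this equivalence is precisely the statement of the lemma, giving both implications simultaneously. I do not expect any real obstacle: the argument is formal, and its only inputs are naturality of $\HKR$, the derived-functor property of Hochschild homology, and the auxiliary claim that $\HKR$ is already a quasi-isomorphism on cofibrant dg commutative algebras. The conceptual point, which makes the reduction useful later for the operadic generalisation, is that the lemma isolates the homotopical content of the HKR theorem --- whether $\Omega^*$ can be correctly computed via a cofibrant replacement --- from the calculational content, which lives entirely in the case of a cofibrant dg algebra.
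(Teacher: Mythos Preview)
Your proposal is correct and follows exactly the approach the paper intends: the paragraph immediately preceding the lemma sets up precisely this commutative square, invokes that $\HH_*$ preserves quasi-isomorphisms for the top edge and the (to-be-proved) hypothesis that $\HKR_Q$ is a quasi-isomorphism for the left edge, and the \qed indicates that the two-out-of-three conclusion is left to the reader. Your write-up spells out what the paper leaves implicit, but there is no difference in strategy.
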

To proceed with the proof, let us first recall that we can
express $C_*(A,A)$ as a twisted tensor product $A\otimes \Bar A$
where $\Bar A  = (T^c(s\overline{A}),\delta)$ is the bar construction
of $A$, arising from the fact the associative operad is Koszul self-dual. Similarly, there is a commutative-Lie bar-cobar 
adjunction arising from Koszul duality between the category 
of conilpotent Lie coalgebras
and the category of commutative algebras 
\[
\mathcal C \colon \mathsf{Lie}\hy\mathsf{Cog} \rightleftarrows \mathsf{Com}\hy\mathsf{Alg} \colon \mathcal L.
\] 
The only properties of this adjunction that we need are the following:
\begin{titemize}
\item the counit of the adjunction $\mathcal C \mathcal L \to \id$ is a quasi-isomorphism and,
\item the commutative cobar construction of a Lie coalgebra is given by the quasi-free commutative algebra $\mathcal C (\mathfrak g) = (S(\mathfrak g[1]),\delta)$, where $\delta$ is a differential extending the cobracket of $\mathfrak g$. 
\end{titemize}
These two adjunctions interact in the following way: the 
restriction functor  from conilpotent associative coalgebras
to conilpotent Lie coalgebras $\mathsf{Ass}\hy\mathsf{Cog}
\longrightarrow \mathsf{Lie}\hy\mathsf{Cog}$
has a left adjoint~$\mathcal U^c$, the \emph{universal enveloping 
coalgebra}, which satisfies $\Bar \pi^* = \mathcal U^c \mathcal L $, 
where $\pi^*$ is the forgetful functor from commutative algebras to 
associative algebras, see Lemma~\ref{lem:fundamental lemma}.

\begin{lemma}
The $\HKR$ map  is a quasi-isomorphism
for any free commutative dga algebra.
\end{lemma}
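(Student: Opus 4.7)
The observation we will exploit is that a free commutative dga $A = S(V)$ is its own cofibrant replacement, so $\Omega_A^1 = A\otimes V$ and hence $\Omega_A^* = A\otimes S(V[-1])$. My plan is to compute $C_*(A,A) = A\otimes \Bar A$ using the commutative--Lie Koszul adjunction recalled above, and then check that the resulting identification is actually implemented by $\HKR_A$.

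First I would identify $A = S(V)$ with $\mathcal{C}(V[-1])$, where $V[-1]$ carries the trivial (abelian) Lie cobracket; this is compatible with the description $\mathcal{C}(\mathfrak g) = (S(\mathfrak g[1]),\delta)$ mentioned above, with $\delta = 0$ here. Since the counit $\mathcal{C}\mathcal{L}\to \id$ is a quasi-isomorphism and $\mathcal{C}$ reflects quasi-isomorphisms on cofibrant Lie coalgebras, the unit $V[-1]\to \mathcal{L}(A)$ is itself a quasi-isomorphism. Applying Lemma~\ref{lem:fundamental lemma} to write $\Bar A = \mathcal{U}^c\mathcal{L}(A)$, and using that $\mathcal{U}^c$ of an abelian Lie coalgebra is the cofree cocommutative coalgebra $S^c$, I obtain a quasi-isomorphism $\Bar A \simeq S^c(V[-1])$. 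In characteristic zero, $S^c(V[-1])$ and $S(V[-1])$ are canonically isomorphic as graded vector spaces, yielding $C_*(A,A) \simeq A\otimes S(V[-1]) = \Omega_A^*$.

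To conclude, it remains to verify that the composition of these quasi-isomorphisms coincides with $\HKR_A$ rather than some other comparison. By naturality in $V$ and multiplicativity, it is enough to check this on weight-one generators, where the inclusion $V \hookrightarrow \overline{A} \hookrightarrow \Bar A$ is sent by $\HKR_A$ to the differential $V \to \Omega_A^1$, matching the canonical inclusion $V[-1] \hookrightarrow S^c(V[-1])$ under the identifications above. Higher weights follow because both sides are freely generated as $A$-modules by (symmetric/shuffle) words in these weight-one pieces and $\HKR_A$ is multiplicative.

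The main technical obstacle I anticipate lies in this last compatibility step: aligning the shuffle coproduct on $\Bar A$ with the symmetric product on $\Omega_A^*$, and ensuring the Koszul-duality quasi-isomorphism $\Bar A \simeq S^c(V[-1])$ really is the one induced by $\HKR_A$. This is essentially forced by the fact that both comparisons agree on the indecomposable piece $V$ and are compatibly (co)multiplicative, but it is the only place where genuine bookkeeping is required.
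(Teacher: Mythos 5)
Your argument is essentially the paper's own proof: identify $A=S(V)$ with the cobar construction $\mathcal C(V[-1])$ on the abelian Lie coalgebra $V[-1]$, use $\Bar \pi^*=\mathscr U^c\mathcal L$ to rewrite $C_*(A,A)=A\otimes\mathscr U^c\mathcal L(A)\simeq A\otimes\mathscr U^c(V[-1])=A\otimes S^c(V[-1])=\Omega_A^*$, and check the identification against $\HKR_A$ on generators. The one step you leave implicit --- that applying $\mathscr U^c$ to the unit quasi-isomorphism $V[-1]\to\mathcal L(A)$ still yields a quasi-isomorphism --- is exactly where the paper invokes the PBW theorem for (co)enveloping coalgebras, and should be stated explicitly, since it is the essential nontrivial input.
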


\begin{proof}
A derivation on a free commutative algebra $A=S(V)$ is uniquely 
determined by specifying the values on generators, from where 
it follows that $\Omega_A^1 = A\otimes V$. 
In this way, we obtain an identification
\[ \Omega_A^* = A\otimes S^c(V[-1]).\]
At the same time, $A=\mathcal{C}(V[-1])$ is the 
commutative cobar construction of the abelian  Lie coalgebra $\g = V[-1]$.
It follows that as chain complexes
\begin{align*}
	 C_*(A,A) &= A\otimes \Bar\mathcal C(V[-1])  \\
	&=  A\otimes {\mathscr U^c \mathcal L \mathcal C(V[-1])} \\
	&\simeq  A\otimes \mathscr U^c(V[-1])  \\
	&= A\otimes S^c(V[-1]).
	\end{align*}
On the third line we used that $\mathscr U^c$ preserves quasi-isomorphisms, 
which is a consequence of the PBW theorem \cite{michaelis1980lie}. The resulting 
anti-symmetrization quasi-isomorphism $\varphi_A :\Omega_A^* 
\longrightarrow C_*(A,A)$
gives us an inverse to
$\HKR_A$ when taking homology, which
proves our claim.
\end{proof}

A \emph{cobar algebra} is any  commutative algebra $\mathscr{C}(\g)$ obtained via a cobar construction of a (shifted) Lie coalgebra $\g$. Recall that cobar algebras are triangulated and 
hence cofibrant and that every algebra admits a cofibrant replacement given by a cobar algebra,
as explained in Corollary 11.3.5 and Proposition B.6.6 of \cite{LV}.

\begin{prop}\label{prop:cofibrant HKR => qi}
	The map $\HKR_A$ is a quasi-isomorphism for any commutative cobar algebra.
\end{prop}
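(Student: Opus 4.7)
The plan is to extend the chain of identifications in the proof of the previous lemma, since none of those steps genuinely required $V[-1]$ to be abelian. Writing $A = \mathcal{C}(\g) = (S(\g[1]),\delta)$ for a conilpotent Lie coalgebra $\g$, the same derivation argument shows $\Omega_A^1 = A \otimes \g[1]$, and hence
\[
\Omega_A^* = S_A(\Omega_A^1[-1]) = A \otimes S^c(\g),
\]
where the total differential combines $\delta$ on $A$ with the internal differential of $\g$.

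For the Hochschild side, I would run through the exact same four-step identification used in the free commutative case:
\begin{align*}
C_*(A,A) &= A \otimes \Bar \mathcal{C}(\g) = A \otimes \mathcal{U}^c \mathcal{L}\mathcal{C}(\g) \\
&\simeq A \otimes \mathcal{U}^c(\g) \simeq A \otimes S^c(\g).
\end{align*}
The first identification is the fundamental identity $\Bar \pi^* = \mathcal{U}^c \mathcal{L}$; the first quasi-isomorphism uses the counit $\mathcal{L}\mathcal{C}(\g) \to \g$ of the commutative-Lie bar-cobar adjunction together with the fact that $\mathcal{U}^c$ preserves quasi-isomorphisms; and the last quasi-isomorphism is the coalgebraic PBW theorem in characteristic zero, which applies to arbitrary (not merely abelian) conilpotent Lie coalgebras. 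The composite is the antisymmetrisation map, inverse to $\HKR_A$ on homology, exactly as in the proof of the previous lemma.

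The subtle point I would be most careful about is that the two occurrences of $A \otimes S^c(\g)$ appearing on the form side and the Hochschild side carry the same twisted differential, so that the composite quasi-isomorphism really is (homotopic to) antisymmetrisation and thus inverse to $\HKR_A$. A clean way to side-step this bookkeeping is to filter $\Omega_A^*$ and $C_*(A,A)$ by word-length in $\g[1]$: the associated graded replaces $\delta$ by zero and reduces the problem to the free commutative dga case of the previous lemma, after which an $E^1$-page comparison of the resulting spectral sequences yields the claim in full generality.
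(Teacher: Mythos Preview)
Your proposal is correct and follows essentially the same route as the paper's own proof. Both identify $\Omega_A^*$ via the derivation argument, construct the antisymmetrisation map $\varphi_A : \Omega_A^* \to C_*(A,A)$, and then reduce to the abelian case of the previous lemma by passing to the associated graded of a filtration (what you call the word-length filtration in $\g[1]$ is exactly what the paper calls the PBW filtration, and the associated graded object is the free commutative algebra $S(\g^\circ)$ on $\g$ with zero cobracket); the only cosmetic difference is that you spell out the intermediate chain $C_*(A,A) = A\otimes \mathcal U^c\mathcal L\mathcal C(\g) \simeq A\otimes \mathcal U^c(\g) \simeq A\otimes S^c(\g)$ before invoking the filtration, whereas the paper goes directly to the spectral sequence.
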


\begin{proof}
 The differential of the cobar construction $A=\mathscr{C}(\g)$ splits as $d_\g + \delta$.
As in the previous lemma, we have that 
\[\Omega_A^1 = (A\otimes \g[-1],d_A\otimes 1 + 1\otimes d_\g + \overline\delta),\] where we notice that the differential has an external component induced by $\delta$, such that if we
write $\delta x= x_{(1)}\otimes x_{(2)}$ in Sweedler notation,  then $\overline \delta (a\otimes x) = a x_{(1)} \otimes x_{(2)}$.
On the other hand, the same computation as in the previous lemma shows that there is a morphism of 
 $A$-modules, 
\[ \varphi_A : \Omega_A^* \longrightarrow
C_*(A,A)  .\]
The result now follows from taking the spectral sequence 
associated to the PBW filtration: the associated
morphism to $\varphi_A$ is equal, in
homology, to the desired inverse of the HKR map corresponding to the
commutative algebra $S(\g^\circ)$ where $\g^\circ$ is the Lie algebra $\g$
with zero bracket.
\end{proof}

\begin{theorem}
The $\HKR$ map is a quasi-isomorphism
for any smooth commutative algebra.
\end{theorem}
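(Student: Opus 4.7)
The plan is to apply Lemma \ref{cor:funcor}: it suffices, for each smooth commutative algebra $A$, to exhibit a cofibrant resolution $q \colon Q \to A$ along which the differential forms functor $\Omega_{(-)}^*$ is a quasi-isomorphism. We take $Q = \mathscr C(\mathfrak g) = (S(\mathfrak g[1]), d)$ to be a cobar algebra resolution of $A$, which exists by the remarks preceding Proposition \ref{prop:cofibrant HKR => qi}, and factor the comparison map as
\[
\Omega_Q^* \longrightarrow \Omega_Q^* \otimes_Q A \longrightarrow \Omega_A^*.
\]
Since $\Omega_Q^1 \cong Q \otimes \mathfrak g[1]$ is quasi-free as a $Q$-module, so is $\Omega_Q^* \cong Q \otimes S(\mathfrak g)$; hence the first arrow is a quasi-isomorphism because $q$ is.

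For the second arrow, we note that the differential on $\Omega_Q^* \otimes_Q A \cong A \otimes S(\mathfrak g)$ preserves the symmetric weight grading, since the internal piece $1 \otimes d_{\mathfrak g}$ is weight-preserving and the cobar contribution $\overline\delta$ sends $S^n(\mathfrak g)$ to itself once the $Q$-components are projected to $A$ via $q$. Thus it is enough to check the quasi-isomorphism in each symmetric weight $n$ separately, where it becomes
\[
\mathrm{Sym}_A^n(\Omega_Q^1 \otimes_Q A[-1]) \longrightarrow \mathrm{Sym}_A^n(\Omega_A^1[-1]),
\]
induced by the weight-one map $\Omega_Q^1 \otimes_Q A \to \Omega_A^1$. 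By definition of cotangent homology with $Q$ as a cofibrant resolution, this weight-one map computes $\HH_*(A,A)$, which by smoothness and Proposition \ref{thm:variants of smoothness} is $\Omega_A^1$ concentrated in degree zero, with $\Omega_A^1$ projective over $A$.

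So $\Omega_Q^1 \otimes_Q A \to \Omega_A^1$ is a quasi-isomorphism between complexes of projective $A$-modules (indeed $A$-free on the left), and applying $\mathrm{Sym}_A^n$ preserves this quasi-isomorphism in characteristic zero, because symmetrization of an $A$-flat complex is exact via the direct-summand projector $(-)^{S_n}$. The main technical obstacle is to verify that $\overline\delta$ genuinely preserves the symmetric weight grading on $\Omega_Q^* \otimes_Q A$; this is a direct computation of the same flavour as the one appearing in the proof of Proposition \ref{prop:cofibrant HKR => qi}, and is precisely what lets us avoid any spectral-sequence argument beyond the naive weight decomposition.
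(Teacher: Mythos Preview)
Your argument is correct and follows essentially the same route as the paper. Both proofs invoke Lemma~\ref{cor:funcor} to reduce to showing that $\Omega_Q^* \to \Omega_A^*$ is a quasi-isomorphism for a cobar resolution $Q \to A$, and both use smoothness via Proposition~\ref{thm:variants of smoothness} to conclude that the weight-one map $\Omega_Q^1 \otimes_Q A \to \Omega_A^1$ is a quasi-isomorphism onto a projective module. The only organisational difference is that the paper first establishes the quasi-isomorphism $\Omega_Q^1 \to \Omega_A^1$ (factoring through $A \otimes_Q \Omega_Q^1$) and then tacitly passes to $\Omega^*$---the justification for this last step being relegated to point~(3) of the discussion immediately following the proof---whereas you factor $\Omega_Q^* \to \Omega_A^*$ through $\Omega_Q^* \otimes_Q A$ from the outset and carry out the symmetric-weight decomposition explicitly; this makes your treatment of the passage from $\Omega^1$ to $\Omega^*$ more self-contained than the paper's.
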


\begin{proof}
Let $A$ be a smooth commutative
algebra. Since $A$ is smooth it follows in
particular that $\HH_1(A,M)=0$ for
every $A$-module $M$. 
Setting $A=M$
we see that $\HH_*(A,A) = \Omega_A^1$,
from where it follows that for any
cofibrant replacement $p:Q\longrightarrow A$
\[ \Omega_p^1 : A\otimes_Q \Omega_Q^1
	\longrightarrow \Omega_A^1\] 
	is a quasi-isomorphism: the left
	hand side computes $\HH_*(A,A)$ and
	the induced map is then an isomorphism.
	Because the canonical map
	\[ \Omega_Q^1 = Q\otimes_Q \Omega_Q^1\longrightarrow  A\otimes_Q \Omega_Q^1
\] 
is a quasi-isomorphism, it follows that
$\Omega_Q^1\longrightarrow\Omega_A^1$ 
is a quasi-isomorphism. In view of
Lemma~\ref{cor:funcor}, we see that
the map $\HKR_A$ is a quasi-isomorphism.
\end{proof}

{{}} Before moving on, we would like to highlight the following
three points that will be revisited when we
develop the general operadic formalism for HKR
theorems:
\begin{tenumerate}
\item For $q: Q\longrightarrow A$ a cofibrant resolution of $A$ a smooth algebra
the map $q_!\Omega_Q^1\longrightarrow \Omega_A^1$
is a quasi-isomorphism. This is \emph{intrinsic} 
to the category
of commutative algebras and thus independent of the
map of operads $\A\longrightarrow \C$.
\item  Showing that for any cofibrant
algebra $Q$ the map 
$\Def_*(f^* Q,f^*Q) \longrightarrow \Omega_Q^*= S_*^c(\Def_*(Q))$
is a quasi-isomorphism is independent of 
smoothness, and depends on the map of operads $f:\A\longrightarrow \C$. In particular, we can
take ``affine'' algebras as test algebras
in this step, which we called ``cobar algebras''
above.

\item The way the two previous points are put
together is by noting that, since the functor
$S^c$ preserves
quasi-isomorphisms, we have that
$ \Omega_q^*:\Omega_Q^* \longrightarrow
  \Omega_A^*$
 is a quasi-isomorphism. Here it is the only
 step where we use the universal enveloping algebra
 functor associated to $\L\longrightarrow \A$ preserves
 quasi-isomorphisms, by the classical PBW theorem. 
\end{tenumerate}

\section{The HKR theorem for general operadic algebras}\label{sec:main}

In this section we generalise the classical notions from the previous 
section to algebras over operads, and prove Theorem~\ref{thm:morphism}, 
generalising the classical Theorem \ref{thm:classical HKR} to smooth 
algebras for morphisms of operads satisfying a natural homological 
condition.

\noindent \textit{Some conventions for this section.}

\begin{titemize}
\item We fix once and for all a morphism of non-dg Koszul
operads $\PP\longrightarrow \QQ$ which we assume comes
from a map of quadratic data.
All (co)operads will be homologically 
graded with zero differential and by $A$
we will always denote a $\QQ$-algebra.

\item For any operad $\OO$, we denote 
by  $\cof\colon \OO\hy\Alg\longrightarrow
\OO\hy\Alg$ a fixed choice of 
cofibrant replacement functor for $\OO$-algebras. 

\item When context allows, we will usually simply
write $\cof$ for a cofibrant replacement of some
algebra $A'$ which will be clear from context. 

\item In
particular, we will sometimes need to use the  composition $UQ$ where $U=U_\OO\colon 
\OO\hy\Alg\longrightarrow \A\hy\Alg$ is the 
associative universal envelope functor, in which
case we will usually to write this $UQ$ when the
algebra we applied it to is clear from context.
\end{titemize}

\subsection{Deformation complexes and (co)homology}

In this section we introduce the 
formalism of (co)tangent homology
and cohomology for algebras over an operad. 
We refer the reader to the
article~\cite{Milles} of J. Mill\`es for a thorough and comprehensive
study of this theory, and point
to the Appendix, where some useful
recollections on algebra over operads
and their operadic modules is given.
The reader can consult 
 Appendix~\ref{app:derivations}
for details on derivations and 
associative universal envelopes of
algebras over operads.

\begin{definition}
Let $A$ be a $\QQ$-algebra.  We define the  \new{tangent 
complex of $A$} with values in an 
operadic $A$-module $M$ by
\[ \Def^*(A,M) = \Der(\cof A,M)=\hom_{U\!\cof A}(\Omega_{\cof A}^1,N). \]
\end{definition}

Note that we dropped the subscript $\QQ$, which
will be clear from context.  
Observe that this is defined up to natural
quasi-isomorphism, and is well-defined in
the derived category of complexes.
 
The morphism $f:\PP\longrightarrow\QQ$ induces a restriction functor
$ f^* : \QQ\hy\Alg
	\longrightarrow 
		\PP\hy\Alg$
that assigns $A$ to the $\PP$-algebra 
$f^*A$ with the same underlying object
as $A$ along with 
the $\PP$-algebra structure
given by the composition
$ \PP \longrightarrow 
	\QQ \longrightarrow \operatorname{End}_A$. 
Observe that we can take 
$Q(f^*A)$ as a 
cofibrant replacement
$Q(f^*A) \to f^*(QA)$
of the  $\PP$-algebra $f^*(QA)$. 
In this way, we obtain a
natural map
\[\Def^*(f,M): 
	 \Def^*(A,M)
	 \longrightarrow
	 \Def^*(f^*A,f^*M) 
	 \] 
for every operadic $A$-module
$M$.  

{{}} 
\begin{definition}\label{def:tangent}
The cohomology of $\Def^*(A,M)$
is, by definition, \new{the tangent cohomology of 
$A$ with values in $M$}, and we will write it
$
	\HH^*_\QQ(A,M)$. 
	\end{definition}

\begin{remark}
When $A$ is an associative algebra, 
$\HH^*(A,M)$
differs from the classical Hochschild cohomology 
groups $\HHC^*(A,N)$ of $A$ only in
that we do not quotient out by inner derivations
in degree zero and we discard the $0$th classical
Hochschild cohomology group of $A$ with values
in $N$ so that
\[
\HH^*(A,M) = \begin{cases}
	\HHC^{*+1}(A,M) & \text{if $*\geqslant 1$}, \\
	\Der(A,M) & \text{if $*=0$}.
		\end{cases}
			\]
\end{remark}

In a similar fashion to tangent cohomology, we define the cotangent
complex.

\begin{definition}\label{def:cotangent}
The \new{cotangent 
homology of $A$ with coefficients in $M$} through the 
\new{cotangent complex of $A$} which is obtained as 
\[ \Def_*(A,M) = \Omega^1_{\cof A}\otimes_{U\!\cof A} M \]
and write it $\HH_*^\QQ(A,M)$. 
\end{definition}

Note that from 
Proposition~\ref{par:CCiso}, in case we take 
$f:\A\longrightarrow \C$, $A$ a commutative 
algebra and choose  $\Omega BA$ for the cofibrant
resolution for the associative algebra $A$, we 
have that
\[ \Def_*(f^*A,f^*A) = s^{-1}C_*(A,A)^+. \]

{{}} 
It is useful to note there are
universal coefficients for these homology theories~\cite{Fresse}. Indeed, writing the functors
as compositions,  where
we write $U$ for the
associative enveloping algebra $UA$ to lighten the notation
\[\Def^*(A,M) 
	= \hom_U(\Def_*(A,U),M),
	\]
	\[\Def_*(A,M) 
	= \Def_*(A,U)\otimes_U M
\]
we obtain two 
\new{universal coefficient spectral
sequences},
\[ E_{s,t}^2 = 
\Tor_s^{U}(-,\HH_t^\QQ(A,U)) \Longrightarrow \HH_{s+t}^\QQ(X,-), \]
\[ E_2^{s,t} = \Ext^s_{U}(\HH_t^\QQ(X,U),-)\Longrightarrow	
	\HH^{s+t}_\QQ(A,-), \]
that explain the relation between (co)homology
theories given by $\Ext$ and $\Tor$ functors and
the operadic theories. For example, in case 
we do this for the associative operad, we 
observe that \[\HH_t^\A(A,UA)=0\quad \text{for
$t\geqslant 1$,}\]
which shows Hochschild 
(co)homology of associative algebras
is given by $\Tor$ and $\Ext$ functors. 

When $A$ is commutative, the cotangent
homology groups, usually known
as the Andr\'e--Quillen homology groups
$\HH_*^\C(A,UA)=\HH_*^\C(A,A)$
are in general non-zero
in higher degrees, so there are obstructions to this comparison.
%probably only in positive char?

\subsection{Smooth algebras}\label{sec:smooth}
In analogy with the characterization of smoothness 
for commutative algebras of Theorem \ref{thm:variants of smoothness}, we introduce the following definition.
\begin{definition} The $\QQ$-algebra
 $A$ is \new{smooth} if for every 
 operadic $A$-module~$M$, 
\[ \HH^1_\QQ(A,M) = 0. \]
or, what is the same, 
if $\HH^0_\QQ(A,-)$ is an exact functor in operadic
$A$-modules.   
\end{definition}

Although our focus lies on non-dg
algebras, it is useful to remark that
the definition is in
general \emph{not} invariant
under quasi-isomorphisms. Indeed, suppose that
$ q: A\longrightarrow A'$ is a quasi-isomorphism
 of dg $\QQ$-algebras and let us
assume first that $A'$ is smooth, and let
$M$ be an operadic $A$-module. There is a
map
\[ \HH^1_\QQ(A,M) \longrightarrow \HH^1_\QQ(A',\psi_!(M)) = 0 \]
but, unless we assume that $q_!$ is well-behaved (that is, flatness assumptions
on $q$), there is no reason to expect this
to be an isomorphism.
However, if $A$ is smooth then any cofibrant
replacement of $A$ is one of $A'$, so that
in this case $A$ smooth implies $A'$ smooth.
It is immediate that 
every free (i.e. affine) $\QQ$-algebra is smooth.

Let us now consider a related condition: 
we say that $A$ is \new{quasi-smooth}
if for 	some ---and hence, every--- 
cofibrant replacement $p: \cof A\longrightarrow A$, the induced map on 
Kahler differential forms
	\[\Omega_p^1: p_!\Omega_{\cof A}^1 
		\longrightarrow \Omega_A^1\]
is a quasi-isomorphism of operadic 
$A$-modules. Before relating
the notions of smoothness
and quasi-smoothness, we
record the following lemma:
 	
\begin{lemma}\label{lema:BK} 
Let $q : X\longrightarrow Y$ be a map 
of complexes of operadic $A$-modules, and suppose that 
for every operadic $A$-module $M$ 
the map
\[ q^* : \hom_A(Y,M) \longrightarrow \hom_A(X,M) \]
is a quasi-isomorphism. Then $q$ is a quasi-isomorphism.
\end{lemma}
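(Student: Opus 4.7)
The plan is to argue via the mapping cone of $q$. Let $C = \cone(q)$, a complex of operadic $A$-modules that sits in a distinguished triangle $X \longrightarrow Y \longrightarrow C \longrightarrow X[1]$. Applying the dg-functor $\hom_A(-,M)$ to this triangle produces a (shifted) triangle whose connecting map is, up to sign, the map $q^*\colon \hom_A(Y,M)\to\hom_A(X,M)$. The hypothesis that $q^*$ is a quasi-isomorphism for every operadic $A$-module $M$ therefore translates, after passing to the long exact sequence in cohomology, into the assertion that $\hom_A(C,M)$ is acyclic for every such $M$.

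The second and decisive step is to specialise the test module. Since $X$ and $Y$ are complexes of operadic $A$-modules and this category is closed under shifts and mapping cones, $C$ itself is a complex of operadic $A$-modules and so is an admissible choice of $M$. Taking $M = C$ gives $\hom_A(C,C)$ acyclic; in particular the cycle $\mathrm{id}_C \in \hom_A(C,C)^0$ vanishes in $H^0$, which is to say that $\mathrm{id}_C$ is null-homotopic. Hence $C$ is contractible, in particular acyclic, and the triangle $X\longrightarrow Y\longrightarrow C$ shows that $q$ is a quasi-isomorphism.

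The only step that requires a bit of attention is legitimising the choice $M = C$: namely, that the hypothesis on $q^*$ really applies to all complexes of operadic $A$-modules and not merely to modules concentrated in a single degree. In the dg-enriched setting in force throughout the paper this is automatic, since both the domain and codomain of $q$ already live in the category of complexes of operadic $A$-modules and the formation of mapping cones stays within that category. Once this is granted, everything else is a one-line application of the universal property of the cone together with the standard fact that an object whose identity is null-homotopic is contractible.
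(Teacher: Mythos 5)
There is a genuine gap at the step ``taking $M=C$''. The hypothesis of the lemma quantifies over operadic $A$-modules $M$, i.e.\ objects concentrated in a single homological degree, not over arbitrary complexes of such modules; the mapping cone $C=\cone(q)$ is an unbounded complex in general, so it is not an admissible test object. This is not a pedantic point: the only place the lemma is used is in the proof that smooth implies quasi-smooth, where the hypothesis is supplied by the smoothness condition $\HH^1_\QQ(A,M)=0$, which is available \emph{only} for genuine operadic $A$-modules $M$. If you strengthen the hypothesis so that $M$ ranges over all complexes (which is what your argument actually uses), the lemma becomes inapplicable there. Nor is the reduction from modules to complexes ``automatic'': knowing that $\hom_A(C,M)$ is acyclic for every module $M$ gives acyclicity of $\hom_A(C,M)$ for bounded complexes $M$ by induction on the length, but for an unbounded $C$ the Hom-complex $\hom_A(C,C)$ is a product-totalization and the passage to the limit produces a $\varprojlim^1$ obstruction, so contractibility of $C$ does not follow formally.

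The repair is to test against a single well-chosen module rather than against $C$ itself. Take $J$ an injective cogenerator of the category of left $U_\QQ(A)$-modules. Injectivity of $J$ gives $H^p\bigl(\hom_A(X,J)\bigr)\cong\hom_A(H_p(X),J)$ and likewise for $Y$ (and for your cone $C$, if you prefer to keep that framing: $\hom_A(C,J)$ acyclic forces $\hom_A(H_p(C),J)=0$ for all $p$). Since $J$ is a cogenerator, $\hom_A(N,J)=0$ implies $N=0$, so $H_p(C)=0$ for all $p$ and $q$ is a quasi-isomorphism. This only ever invokes the hypothesis for the single module $M=J$, which is exactly the form in which the hypothesis is available in the application. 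Your cone formalism is fine as scaffolding, but the decisive specialization must be $M=J$, not $M=C$.
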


\begin{proof}
Let us take $J$ an injective cogenerator of the category of left
$UA$-modules. Then the $p$th cohomology group of $\hom_A(X,J)$ 
identifies with \[F_p(X) :=\hom_A(H_p(X),J)\] because $J$ is 
injective. Since $J$ is also a cogenerator, the collection of functors 
$\{F_p\}_{p\in\ZZ}$ detects quasi-isomorphisms, which gives what we wanted.
\end{proof}

\textit{Remark.}
Observe that the reverse
implications is not true. Indeed,
let us consider the commutative 
algebra $A=k[x]$, the trivial
$A$-module $M=Y= \kk$ and the complex
$ X : A\longrightarrow A$
where the differential is given
by multiplication by $x$.
Then the quotient map 
$q: X\longrightarrow Y$
is a quasi-isomorphism, but the induced
map
$\hom_A(Y,M) 
	\longrightarrow \hom_A(X,M)$
	is not: the right hand side 
	computes $\Ext_A^*(\kk,M)$,
	and this may not always 
	be concentrated
	in degree $0$.
	
{{}} With this lemma at hand,
we can prove the following proposition. It is
interesting to compare it with Corollary 7.3.5 
in~\cite{hinich1997homological}. While we make
a statement about the behaviour of the
induced morphism 
\[ \Omega_p^1 : p_!\Omega^1_B
	\longrightarrow \Omega_A^1\] when
$f$ is an acyclic fibration onto a smooth algebra, 
that corollary makes
a statement about the behaviour of
that map when $f$ is an acyclic
cofibration; in both cases the conclusion
is that the map induced is
a quasi-isomorphism.
 
\begin{prop}\label{prop:reg}
Every smooth $\QQ$-algebra is
quasi-smooth.
\end{prop}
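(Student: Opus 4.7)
The strategy is to apply Lemma~\ref{lema:BK} to the candidate map $q := \Omega_p^1 : p_!\Omega_{\cof A}^1 \longrightarrow \Omega_A^1$. That lemma reduces the problem to checking that, for every operadic $A$-module $M$, the induced map
\[
q^* : \hom_{UA}(\Omega_A^1, M) \longrightarrow \hom_{UA}(p_!\Omega_{\cof A}^1, M)
\]
is a quasi-isomorphism of complexes. The left hand side is $\Der(A, M)$, concentrated in cohomological degree zero, while by the $(p_!, p^*)$ adjunction the right hand side is $\hom_{U\!\cof A}(\Omega_{\cof A}^1, p^*M) = \Der(\cof A, M) = \Def^*(A, M)$, whose cohomology is the tangent cohomology $\HH^*(A, M)$. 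Under these identifications, $q^*$ is precisely the pullback of derivations along $p$.

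The plan then splits into two claims, for every operadic $A$-module $M$: first, that $q^*$ induces an isomorphism $\Der(A, M) \simeq \HH^0(A, M)$, and second, that $\HH^n(A, M) = 0$ for every $n \geq 1$. The first is formal and independent of smoothness: a cocycle in cohomological degree zero of $\Def^*(A, M)$ is a derivation $D : \cof A \to M$ commuting with differentials---where $M$ sits in degree zero---hence annihilating the image of $d:(\cof A)_1 \to (\cof A)_0$ and therefore descending to a derivation of $A = H_0(\cof A)$; this construction is clearly inverse to $q^*$ in degree zero.

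The substantive content is thus to establish $\HH^n(A, M) = 0$ for $n \geq 1$. For $n = 1$ this is precisely the definition of smoothness. To push it up to $n \geq 2$, I would exploit the equivalent reformulation of smoothness as projectivity of $\Omega_A^1$ as a $UA$-module---which holds because $\Der(A,-) = \hom_{UA}(\Omega_A^1,-)$ being exact is equivalent to $\Omega_A^1$ being projective---and feed this into the universal coefficient spectral sequence
\[
E_2^{p,q} = \Ext^p_{UA}(H_q(L_A), M) \Longrightarrow \HH^{p+q}(A, M),
\]
where $L_A := p_!\Omega_{\cof A}^1$. Projectivity of $H_0(L_A) = \Omega_A^1$ annihilates the row $E_2^{p,0}$ for $p \geq 1$; testing $\HH^1(A, J) = 0$ against an injective cogenerator $J$ of the category of $UA$-modules then forces $H_1(L_A) = 0$ by a standard cogenerator argument. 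An induction on $n$, using the same spectral sequence under the hypothesis $H_i(L_A) = 0$ for $1 \leq i < n$, identifies $\HH^n(A, M)$ with $\hom_{UA}(H_n(L_A), M)$ for every $M$.

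The most delicate point I anticipate is closing this induction beyond $n = 1$: the identification $\HH^n(A, M) \simeq \hom_{UA}(H_n(L_A), M)$ only forces $H_n(L_A) = 0$ if one can independently show $\HH^n(A, J) = 0$ for an injective cogenerator $J$, and this vanishing is \emph{not} given directly by the smoothness hypothesis once $n \geq 2$. I expect one must combine the spectral sequence argument with a dimension-shift via short exact sequences $0 \to M \to I \to M' \to 0$ with $I$ injective, together with the vanishing of $\HH^k(A, -)$ for $k < n$ already obtained from the inductive hypothesis, so as to propagate the smoothness constraint upwards through cohomological degree.
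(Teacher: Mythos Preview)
Your reduction via Lemma~\ref{lema:BK}, the adjunction identification $\hom_{UA}(p_!\Omega_{\cof A}^1,M)\cong\Der(\cof A,p^*M)=\Def^*(A,M)$, and the recognition that $q^*$ is pullback of derivations along $p$, are exactly the steps the paper takes. The paper then simply asserts that, since $A$ is smooth and the target computes $\HH^*_\QQ(A,M)$, the map is a quasi-isomorphism because it is the identity on $\HH^0_\QQ(A,M)=\Der(A,M)$; it does not explicitly argue the vanishing of $\HH^n_\QQ(A,M)$ for $n\geq 2$. You are more scrupulous than the paper in isolating this as the substantive point.

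However, your proposed mechanism for closing the induction does not work. Granting the inductive hypothesis $H_i(L_A)=0$ for $1\leq i<n$ and $\Omega_A^1$ projective, your spectral sequence correctly collapses to give $\HH^n(A,M)\cong\hom_{UA}(H_n(L_A),M)$. But the dimension-shift you invoke, applied to $0\to M\to I\to M'\to 0$ with $I$ injective, together with $\HH^{n-1}(A,M')=0$, yields only an injection $\HH^n(A,M)\hookrightarrow\HH^n(A,I)$. Since $\HH^*(A,-)$ is \emph{not} the right derived functor of $\Der(A,-)$ in general, there is no a~priori reason for $\HH^n(A,I)$ to vanish on injectives; and for $I$ an injective cogenerator your own spectral sequence gives $\HH^n(A,I)=\hom_{UA}(H_n(L_A),I)$, which is exactly what you are trying to prove is zero. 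The argument is circular at this step, and no rearrangement of the long exact sequence extracts more: the only input from the hypothesis is $\HH^1(A,-)=0$, which has already been spent to obtain $H_1(L_A)=0$. So the gap you flag is real, and the dimension-shift does not bridge it; the paper's proof, as written, does not bridge it either.
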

\begin{proof}
Suppose that $A$ is smooth, and let
$Q\longrightarrow A$ be a cofibrant 
replacement, let us show that the map
$p_!\Omega_Q^1\longrightarrow \Omega_A^1$
is a quasi-isomorphism of operadic $A$-modules. 
By the previous two lemmas, it suffices to
show that for every operadic $A$-module $M$, 
the induced map
\[  p^*: \hom_A(\Omega_A^1,M)
	\longrightarrow \hom_A(p_!\Omega_Q^1,M)
	  \]
is a quasi-isomorphism. By adjunction,
the codomain is naturally isomorphic to
\[ \hom_{UQ}(\Omega_Q^1,p^*M) \]
so we obtain $p^*$ identifies naturally with the map
$ \hom_A(\Omega_A^1,M) \longrightarrow 
\hom_Q(\Omega_Q^1,M)$
representing the pullback along $p$
\[p^*: \Der(A,M) \longrightarrow \Der(Q,p^*M).\]
Since $A$ is smooth
and the right hand side
computes $\HH^*_\QQ(A,M)$, it follows that this 
map is a quasi-isomorphism: it induces the
identity of $\HH^0_\QQ(A,M)=\Der(A,M)$. 
We conclude that 
$\Omega_p^1$ is a quasi-isomorphism, which
means that $A$ is quasi-smooth,
as we wanted. 
 \end{proof}
 
\begin{remark} It is
important to observe that the
notion of quasi-smoothness 
may be quite weak. 
For example, every
\emph{unital} associative algebra is
quasi-smooth, owing to the
fact that the module of
associative K\"ahler preserves quasi-isomorphisms. Indeed,
in this case this functor fits into an exact 
sequence
\[ 0 
	\longrightarrow	
	\Omega_A^1 
		\longrightarrow
		A\otimes A
		\longrightarrow A
			\longrightarrow 
			0 \]
and the second and third
functor clearly preserve
quasi-isomorphisms. 
 \end{remark}
 
 \subsection{Left Koszul morphisms}

In this section we collect some facts about 
(left) Koszul morphisms between weight graded operads,
which we introduce. We refer the reader
to the excellent monograph~\cite{PoliPosiAMS}*{Section 2.5}
for the case of algebras. We say a symmetric sequence $\XX$
 is \new{diagonally pure} if for each $p\in\NN$ the component~ $\XX_p$ of homological degree $p$ is concentrated in weight $p$. 
With this at hand, let us introduce the kind of left dg $\PP$-modules
that interest us.

\begin{definition}
A quasi-free left dg $\PP$-module $(\PP\circ \YY,d)$ 
is \new{diagonally pure} if its generating sequence $\YY$~is so. A
 map of operads $f: \PP\longrightarrow\QQ$ is \new{left
Koszul} if $\QQ$ admits a diagonally pure quasi-free resolution in
the category of  left $\PP$-modules. Finally, a resolution
$\mathcal F$ is \new{minimal} if the differential
of $\kk\circ_\PP \mathcal{F}$ vanishes.
\end{definition}

It is easy to see that any diagonally pure quasi-free 
resolution $(\PP\circ \YY,d)$ 
is minimal. Indeed, the resulting complex is of the form
$(\YY,\bar d)$. Since $d$ preserves the weight degree but
lowers the homological degree, the differential $\bar d$ does
too and, since $\YY$ is diagonally pure, $\bar d$ vanishes.

 \begin{definition}
We define $\Tor^\PP(\kk,\QQ)$ as the homology of 
 the dg module $\kk\circ_\PP \mathcal{F}$ where $\mathcal F$
 is any quasi-free resolution of $\QQ$ in left dg $\PP$-modules. 
 For each $i,j\in\PP$ we write $\Tor^\PP_i(\kk,\QQ)_{(j)}$ for the
 component of $\Tor$ in homological degree $i$ and weight degree $j$.
 \end{definition}
 
 It is useful to note that this is well defined, since the category
 of left $\PP$-modules admits a model structure in which the
 fibrations are the arity-wise surjections, the weak equivalences
 are the quasi-isomorphisms, and the quasi-free left modules
 are included in the class of cofibrant objects. It is 
 important that we are working over a field of characteristic
 zero, so that $\PP$ is $\Sigma$-cofibrant. %cite Fresse
 
 \begin{lemma}
 Let $f:\mathcal M\longrightarrow \mathcal N$ be a map of
 left $\PP$-modules and suppose that $\mathcal F = (\PP\circ \YY,d)$
 is a quasi-free complex mapping onto $\mathcal M$ and
 that $\mathcal R$ is a resolution of $\mathcal N$. Then
 there exists a map of left $\PP$-modules 
 $\mathcal F \longrightarrow  \mathcal R$ extending $f$, 
 and any two such choices are homotopic.\qed
 \end{lemma}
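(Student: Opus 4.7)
The lemma asserts two standard facts for cofibrant objects in a model category: the lifting property against acyclic fibrations (existence) and uniqueness up to left homotopy. In view of the model structure on left $\PP$-modules recalled in the preceding paragraph --- in which quasi-free modules are cofibrant, and surjective quasi-isomorphisms are acyclic fibrations --- both claims are formal consequences of the lifting axiom and the cylinder/path object construction. The plan is to either cite this model-categorical fact directly, or else give a concrete inductive proof for the benefit of the reader, which is arguably more useful since the same style of argument will be revisited in later sections.

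For the concrete proof of existence, I would filter the generating sequence by homological degree, writing $\YY = \bigcup_n \YY_{\leq n}$ and $\mathcal F_{\leq n} = \PP \circ \YY_{\leq n}$, and build $\varphi \colon \mathcal F \to \mathcal R$ inductively, extending $\PP$-linearly from the generators. For $y \in \YY_0$, surjectivity of $\mathcal R \to \mathcal N$ lets one choose $\varphi(y) \in \mathcal R$ mapping to $f\pi(y)$, where $\pi \colon \mathcal F \to \mathcal M$ is the given map. For $y \in \YY_n$ with $\varphi$ already defined on $\YY_{\leq n-1}$, the element $\varphi(dy) \in \mathcal R$ is automatically a cycle whose image in $\mathcal N$ agrees with the differential of $f\pi(y)$. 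One first lifts $f\pi(y)$ arbitrarily to some $z \in \mathcal R$, and then corrects $z$ by a boundary in the acyclic kernel $K$ of $\mathcal R \to \mathcal N$ so as to arrange $d\varphi(y) = \varphi(dy)$; such a correction exists precisely because $dz - \varphi(dy)$ is a cycle in $K$, hence a boundary there.

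For the uniqueness statement, given two lifts $\varphi_0, \varphi_1$, I would build a chain homotopy $h \colon \mathcal F \to \mathcal R$ of degree $+1$ satisfying $\varphi_1 - \varphi_0 = dh + hd$ by the same inductive scheme on $\YY$: on $\YY_0$ the difference $(\varphi_1 - \varphi_0)(y)$ is a cycle lying in the kernel $K$, hence a boundary $dh(y)$ by acyclicity of $K$; the inductive step at $\YY_n$ proceeds by the same ``lift then correct by a boundary in $K$'' maneuver, applied now to the cycle $(\varphi_1 - \varphi_0)(y) - h(dy)$ in $K$. Throughout, one uses crucially that we are in characteristic zero and that $\PP$ is $\Sigma$-cofibrant so that the free $\PP$-linear extensions exist and behave well.

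The only genuine content of the argument is the acyclicity of $K = \ker(\mathcal R \to \mathcal N)$, which follows from $\mathcal R \to \mathcal N$ being a surjective quasi-isomorphism by the long exact sequence in homology. Beyond that, the proof is bookkeeping of free $\PP$-linear extensions over a filtration of $\YY$, so I expect no real obstacles.
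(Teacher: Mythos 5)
Your argument is correct and matches what the paper intends: the lemma is stated with no proof precisely because it is the standard lifting/homotopy-uniqueness statement for cofibrant (here, quasi-free) objects in the model structure on left $\PP$-modules recalled just before it. Your inductive construction over the homological filtration of $\YY$, using surjectivity of $\mathcal R\to\mathcal N$ and acyclicity of its kernel, is the standard way to make that explicit, so there is nothing to add.
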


\begin{lemma}\label{lemma:diagonal} The map $f$ is left Koszul if and
only if $\Tor^\PP(\kk,\QQ)$ is concentrated on the diagonal. 
\end{lemma}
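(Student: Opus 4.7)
The plan is to establish the two implications separately. The forward direction is essentially an unpacking of the discussion immediately preceding the statement, while the reverse requires constructing a minimal resolution inductively.

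For the forward implication, suppose $f$ is left Koszul and let $\mathcal F=(\PP\circ\YY,d)$ be a diagonally pure quasi-free resolution of $\QQ$. As already observed in the paragraph just after the definition, the induced differential $\bar d$ on $\YY\cong\kk\circ_\PP\mathcal F$ preserves the weight degree while lowering the homological degree by one, and diagonal purity of $\YY$ forces $\bar d$ to be zero. Hence $\Tor^\PP_p(\kk,\QQ)\cong\YY_p$ is concentrated in weight $p$, i.e.\ on the diagonal.

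For the converse, assume $\Tor^\PP(\kk,\QQ)$ is concentrated on the diagonal. I would build a minimal quasi-free resolution $(\PP\circ\YY,d)$ by induction on the homological degree, arranging that $\YY_p$ is a symmetric-sequence lift of $\Tor^\PP_p(\kk,\QQ)$; by hypothesis each such lift then sits in weight $p$, so $\YY$ is diagonally pure. Concretely, take $\YY_0$ to map isomorphically onto $\Tor^\PP_0(\kk,\QQ)$, yielding a surjection $\PP\circ\YY_0\twoheadrightarrow\QQ$ with zero differential. At stage $p$, having constructed a partial resolution $(\PP\circ\YY^{<p},d)\to\QQ$ realizing $\Tor^\PP_i(\kk,\QQ)$ for $i<p$, attach generators $\YY_p$ indexed by cycles representing a basis of the degree-$p$ homology of $\kk\circ_\PP(\PP\circ\YY^{<p},d)$ and use the lifting lemma stated just above to extend the differential to kill these cycles.

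The main obstacle is verifying that at each inductive stage the homology classes one needs to kill really do constitute a copy of $\Tor^\PP_p(\kk,\QQ)$, so that diagonality of $\Tor$ translates into purity of $\YY_p$. This rests on the model structure on left $\PP$-modules recalled earlier: since $\PP$ is $\Sigma$-cofibrant in characteristic zero, every quasi-free left $\PP$-module is cofibrant, $\Tor^\PP(\kk,\QQ)$ is computed by $\kk\circ_\PP\mathcal F$ for any quasi-free resolution, and the lifting lemma guarantees the needed extensions of differentials. The resulting $(\PP\circ\YY,d)$ is then a diagonally pure quasi-free resolution of $\QQ$, so $f$ is left Koszul.
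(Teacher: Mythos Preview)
Your proposal is correct and follows essentially the same route as the paper: both directions hinge on the fact that a minimal quasi-free resolution has generators identified with $\Tor^\PP(\kk,\QQ)$, and the converse is handled by constructing such a minimal resolution inductively. The paper phrases the inductive step slightly differently---it takes a minimal generating set of the kernel $\KK_{p-1}$ of the previous stage via an equivariant section of $\KK_{p-1}\to\kk\circ_\PP\KK_{p-1}$, rather than your description in terms of the degree-$p$ homology of $\kk\circ_\PP(\PP\circ\YY^{<p},d)$ (which is actually zero once you are building minimally; what you mean to kill is the degree-$(p-1)$ homology of the augmented partial complex)---but the underlying construction and logic are the same.
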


\begin{proof}
Let us show we can construct minimal
quasi-free resolutions $\FF = (\PP\circ\XX,d) \longrightarrow \QQ$.
To do this, let us consider an equivariant section $\sigma$ of the
projection $\QQ \longrightarrow \kk\circ_\PP \QQ$, which exists since we work over a field of characteristic zero, and let
$\XX_0 = \sigma(\kk\circ_\PP \QQ)$, so we have an epimorphism
\[ f_0:\PP\circ\XX_0\longrightarrow \QQ=\KK_{-1}.\] The kernel $\mathcal K_0$
of this map
is a left $\PP$-module, so we may repeat this and take $\XX_1$ a
minimal generating set for $\KK_0$ obtained from an equivariant
section of the projection 
$\KK_0\longrightarrow \kk\circ_\PP \KK_0$, 
along with $f_1 : \PP\circ \XX_1\longrightarrow \KK_0$.
Extend the 
construction above to $\FF_1 = \PP\circ (\XX_0\oplus s\XX_1)$ where
the differential is the unique map
$\XX_0\oplus s\XX_1\to \FF_1$
that vanishes on $\XX_0$ and
maps $s\XX_1$ onto $\KK_0$.
In this way, $\HH_0(\FF_1)$ is
isomorphic to $\QQ$ through
the map $f_0$. We can now continue
this process by adjoining generators
in homological degree $2$ to obtain
$\FF_2$ with $\HH_1(\FF_2)=0$
and $\HH_0(\FF_2)$ isomorphic to~$\QQ$. 
Continuing, in the limit, we obtain the desired
resolution.

Since the resolution is minimal, we see that $\XX$ is isomorphic
to $\Tor^\PP(\kk,\QQ)$, so $\XX$ must be concentrated in
the diagonal. Conversely, it is clear that if we have a diagonally pure
resolution, then $\Tor^\PP(\kk,\QQ)$ is concentrated
on the diagonal.
\end{proof}

Let us recall the following from~\cite{PBW} .

\begin{definition}
We say a morphism of operads $f\colon \PP \to \QQ $ enjoys the \new{PBW 
property} if there is an endofunctor 
$\mathcal X : \mathsf{C} \longrightarrow \mathsf{C}$
on the category underlying that of $\PP$-algebras so that for each $\PP$-algebra~$A$ there is a natural isomorphism
\[ f_!(A)^\# \longrightarrow \mathcal X(A^\#).\]
\end{definition}

The main result of~\cite{PBW}, if we
take monads there to be algebraic
operads, is the following:

\begin{theorem}
The morphism of operads $f\colon \PP \to \QQ$ satisfies the PBW 
property if and only if it makes $\QQ$ 
into a free right module over
$\PP$. In this case, the functor $\mathcal X$ is a basis for $\QQ$ as a right $\PP$-module.\qed
\end{theorem}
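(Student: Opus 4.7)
The approach is to use the standard identification $f_!(A) \cong \QQ \circ_\PP A$ of the left adjoint to the restriction functor $f^*$, and translate both implications into statements about the relative composition product. Everything is then driven by the associativity of $\circ$ and the fact that $\PP \circ_\PP A = A$ expresses exactly the $\PP$-algebra structure on $A$.

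For the direction \emph{free implies PBW}, suppose $\QQ \cong \YY \circ \PP$ as right $\PP$-modules for some symmetric sequence $\YY$. Then for any $\PP$-algebra $A$,
\[ f_!(A)^\# \;=\; (\QQ \circ_\PP A)^\# \;=\; \bigl((\YY \circ \PP) \circ_\PP A\bigr)^\# \;=\; (\YY \circ A)^\# \;=\; \YY(A^\#), \]
naturally in $A$. Taking $\mathcal{X}$ to be the Schur endofunctor associated to $\YY$ witnesses the PBW property and displays $\YY$ as the required basis.

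For the converse, I would apply the PBW property to free $\PP$-algebras $A = \PP(V)$, using $f_!(\PP(V)) = \QQ(V)$, to obtain a natural isomorphism
\[ \QQ(V) \;\cong\; \mathcal{X}(\PP(V)), \quad V \in \mathsf{C}. \]
The left-hand side is a polynomial (Schur) functor of $V$, so the composite $V \mapsto \mathcal{X}(\PP(V))$ is also polynomial. Since we work over a field of characteristic zero, polynomial endofunctors correspond bijectively to symmetric sequences, so there is a symmetric sequence $\YY$ with $\mathcal{X}(\PP(V)) \cong (\YY \circ \PP)(V)$ naturally in $V$, and hence $\QQ \cong \YY \circ \PP$ as symmetric sequences.

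The crux of the argument, which I expect to be the main obstacle, is to check that this identification respects the right $\PP$-module structure: the action on $\YY \circ \PP$ comes from operadic composition in the second factor, whereas on $\QQ$ it is induced by $f$ followed by composition in $\QQ$. I would verify compatibility by exploiting naturality of the PBW isomorphism in $A$ applied to the $\PP$-algebra structure maps $\PP(\PP(V)) \to \PP(V)$, since at the level of free algebras these structure maps encode precisely the right $\PP$-action on both sides. Once this compatibility is settled, $\YY$ is simultaneously the basis of $\QQ$ over $\PP$ and the endofunctor $\mathcal{X}$ of the PBW property, closing the equivalence.
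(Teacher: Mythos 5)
First, a point of reference: the paper does not prove this statement at all --- it is imported from~\cite{PBW} and stated with a tombstone --- so the comparison below is against the argument in that reference rather than against anything in the present text. Your direction ``free $\Rightarrow$ PBW'' is correct and is the standard one-line computation: $f_!(A)=\QQ\circ_\PP A$, and $(\YY\circ\PP)\circ_\PP A\cong\YY\circ(\PP\circ_\PP A)=\YY(A^\#)$ because $\YY\circ-$ preserves the reflexive coequalizer defining $\circ_\PP$ and $\PP\circ_\PP A=A$.

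The converse, however, contains a circular step. From $\QQ(V)\cong\XX(\PP(V))$ you conclude that $V\mapsto\XX(\PP(V))$ is polynomial and therefore ``there is a symmetric sequence $\YY$ with $\XX(\PP(V))\cong(\YY\circ\PP)(V)$''. Polynomiality only tells you that the composite $\XX\circ\PP$ is the Schur functor of \emph{some} symmetric sequence, and by full faithfulness of the Schur correspondence that symmetric sequence is just $\QQ$ itself; the further factorisation of $\QQ$ as $\YY\circ\PP$ is precisely the freeness you are trying to establish, and nothing in the correspondence between polynomial functors and symmetric sequences produces it --- every $\QQ$ defines a polynomial functor, free over $\PP$ or not. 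What is actually needed is an argument about the endofunctor $\XX$ itself rather than about the composite: for instance, in characteristic zero one can use the natural retraction of $V$ off $\PP(V)^\#$ (the unit followed by projection onto the arity-one component) to exhibit $\XX$ as a natural retract of the analytic functor $V\mapsto\QQ(V)$, hence itself analytic with some symmetric sequence $\YY$; only then does $\XX(\PP(V))=(\YY\circ\PP)(V)$ hold with the right-hand side carrying the free right-module structure $\YY(\gamma_V)$. Your closing paragraph correctly identifies naturality with respect to $\gamma_V\colon\PP(\PP(V))\to\PP(V)$ as the mechanism that matches the two right $\PP$-actions, but that check cannot be run before $\YY$ has been produced. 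As written, the proposal assumes the existence of the basis it is meant to construct.
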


Our first main theorem shows the PBW property above
is related, by Koszul duality, to the notion of
left Koszul morphisms, at least when 
$\PP$ and $\QQ$ are Koszul operads. Note
this is an extension of~\cite{PoliPosiAMS}*{Corollary 5.9} 
to algebraic operads.

\begin{theorem}[Duality]\label{thm:duality}
	A map between Koszul operads is left Koszul if and only if
	its Koszul dual map satisfies the
	PBW property.
	\end{theorem}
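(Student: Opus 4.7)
The plan is to rephrase both conditions as vanishing statements about concrete Koszul-type complexes and then identify those complexes as linearly dual to each other. By Lemma~\ref{lemma:diagonal}, the morphism $f$ is left Koszul if and only if $\Tor^\PP(\kk,\QQ)$ is concentrated on the diagonal. Because $\PP$ is Koszul, the two-sided Koszul construction produces a quasi-free left $\PP$-module resolution $\PP\circ\PP^{\antishriek}\circ\QQ\to\QQ$ with differential twisted by the composite $\kappa\colon\PP^{\antishriek}\to\PP\to\QQ$, and applying $\kk\circ_\PP-$ yields
\[
\Tor^\PP(\kk,\QQ)\;\cong\;H_*(\PP^{\antishriek}\circ\QQ,\,d_\kappa).
\]

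Next, by the theorem of~\cite{PBW} recalled just before the statement, $g:=f^!\colon\QQ^!\to\PP^!$ has the PBW property if and only if $\PP^!$ is free as a right $\QQ^!$-module. In the connected weight-graded setting over a field of characteristic zero I would check that this freeness is equivalent to $\Tor^{\QQ^!}_{\geq 1}(\PP^!,\kk)=0$: higher $\Tor$-vanishing forces flatness, and a Nakayama-style argument on the indecomposables then lifts an equivariant section to an isomorphism $\PP^!\cong(\PP^!\circ_{\QQ^!}\kk)\circ\QQ^!$. Since $\QQ^!$ is also Koszul, this $\Tor$ is computed via the Koszul resolution $\QQ^!\circ(\QQ^!)^{\antishriek}\to\kk$ as
\[
\Tor^{\QQ^!}(\PP^!,\kk)\;\cong\;H_*(\PP^!\circ(\QQ^!)^{\antishriek},\,d_\kappa).
\]

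The third step is to identify the two Koszul complexes above as linearly dual. Arity-wise with finite-dimensional weight components and in characteristic zero, linear dualization is compatible with composition of symmetric sequences, and the Koszul identifications $(\PP^{\antishriek})^*\cong\PP^!$ and $\QQ^*\cong(\QQ^!)^{\antishriek}$ (up to the operadic suspension implicit in Koszul duality) produce a natural pairing
\[
(\PP^{\antishriek}\circ\QQ)^*\;\cong\;\PP^!\circ(\QQ^!)^{\antishriek}
\]
that preserves total weight and reverses homological degree. The technical core is the verification that the differentials $d_\kappa$ on either side are mutually adjoint under this pairing; this reduces to the duality of the twisting morphisms $\PP^{\antishriek}\to\PP$ and $(\QQ^!)^{\antishriek}\to\QQ^!$, together with the fact that the action $\PP\otimes\QQ\to\QQ$ through $f$ and the action $\QQ^!\otimes\PP^!\to\PP^!$ through $g=f^!$ are dual to each other because $f$ and $g$ come from a pair of mutually orthogonal maps of quadratic data.

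Combining the three steps, the pairing induces $H_p(\PP^{\antishriek}\circ\QQ)_{(n)}\cong H_{n-p}(\PP^!\circ(\QQ^!)^{\antishriek})_{(n)}^*$, so the left-hand side is concentrated on the diagonal $p=n$ exactly when the right-hand side is concentrated in homological degree zero. This gives the asserted equivalence. The main obstacle I expect is the adjointness check of the differentials in Step~3, where signs and the suspension shifts from operadic Koszul duality must be tracked carefully; the passage from Tor-vanishing to freeness in Step~2, while requiring some care, is standard in this setting.
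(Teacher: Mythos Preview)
Your proposal is correct and follows essentially the same three-step strategy as the paper: translate ``left Koszul'' into the diagonal condition on $\Tor^\PP(\kk,\QQ)$ via Lemma~\ref{lemma:diagonal}, translate the PBW property for $f^!$ into vanishing of $\Tor^{\QQ^!}_{>0}(\PP^!,\kk)$ via the freeness criterion from~\cite{PBW}, and then identify the two Koszul complexes $\PP^{\antishriek}\circ\QQ$ and $\PP^!\circ(\QQ^!)^{\antishriek}$ as linear duals so that the diagonal condition on one side becomes concentration in degree zero on the other. The paper packages these as two short lemmas and cites~\cite{PBW} for the freeness criterion, whereas you spell out the Nakayama argument and flag the sign and suspension bookkeeping in the duality of differentials; but the content and logical structure are the same.
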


To do this, we just need two technical
lemmas, beginning with the following
simple homological criterion
for freeness, which we recall from
Proposition 4.1 in~\cite{PBW}, for
example. We phrase it in a slightly different way than we did there:

\begin{lemma} A right $\PP$-module is 
$\mathcal{M}$ is free if and only if
for every $i,j\in\NN$, the group
$\Tor^\PP_{j-i}(\mathcal{M},\kk)_{(i)}$
vanishes unless $i=j$. \qed
\end{lemma}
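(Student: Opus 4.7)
The plan is to mirror the construction of a minimal free resolution carried out in the proof of Lemma~\ref{lemma:diagonal}, transporting it to the category of right $\PP$-modules, and then to read off freeness of $\mathcal{M}$ from the vanishing of the higher $\Tor^\PP(\mathcal{M},\kk)$ groups.

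For the forward direction, suppose $\mathcal{M}\cong\XX\circ\PP$ for some non-dg symmetric sequence $\XX$. Since we work over a field of characteristic zero, $\PP$ is $\Sigma$-cofibrant and free right $\PP$-modules are projective in the category of right $\PP$-modules. Hence $\Tor^\PP_n(\mathcal{M},\kk)=0$ for $n\geq 1$, while $\Tor^\PP_0(\mathcal{M},\kk)=\mathcal{M}\circ_\PP\kk=\XX$ is concentrated in homological degree zero. In the stated bigrading, this exactly says that $\Tor^\PP_{j-i}(\mathcal{M},\kk)_{(i)}$ vanishes whenever $j-i\neq 0$, i.e., unless $i=j$.

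For the converse, I would build a minimal free resolution $\FF_\bullet\to\mathcal{M}$ by iterating the construction already used on the left-module side. Pick an equivariant section $\sigma_0$ of the projection $\mathcal{M}\to\mathcal{M}\circ_\PP\kk$, which exists in characteristic zero, let $\XX_0=\sigma_0(\mathcal{M}\circ_\PP\kk)$, and obtain a surjection $\FF_0=\XX_0\circ\PP\twoheadrightarrow\mathcal{M}$. Its kernel $\KK_0$ is a right $\PP$-submodule contained in $\XX_0\circ\PP_{\geq 1}$ by minimality. Pick $\XX_1$ via an equivariant section of $\KK_0\to\KK_0\circ_\PP\kk$ and extend to $\FF_1=(\XX_0\oplus s\XX_1)\circ\PP$ with differential sending $s\XX_1$ onto $\KK_0$ and vanishing on $\XX_0$. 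Continuing in homological degree $n$ by adjoining $s^n\XX_n$ produces, in the limit, a minimal free resolution $\FF_\bullet$ with generating sequence $\bigoplus_{k\geq 0} s^k\XX_k$.

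By construction the induced differential on $\FF_\bullet\circ_\PP\kk$ vanishes---this is the content of minimality---so $\Tor^\PP_n(\mathcal{M},\kk)\cong\XX_n$ as bigraded symmetric sequences. Under the hypothesis that $\Tor^\PP_{j-i}(\mathcal{M},\kk)_{(i)}$ vanishes unless $i=j$, we conclude $\XX_n=0$ for all $n\geq 1$, so the resolution collapses to $\FF_0$ and $\mathcal{M}\cong\XX_0\circ\PP$ is free. The main potential obstacle is the verification that this iterative construction is well defined and that minimality truly forces the bar differential to vanish; however, both points are direct right-module analogues of the arguments already carried out for left modules in the proof of Lemma~\ref{lemma:diagonal}, using that the category of right $\PP$-modules carries a cofibrantly generated model structure in which free modules are cofibrant.
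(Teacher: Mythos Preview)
Your argument is correct. The paper does not actually prove this lemma: it is stated with a bare \qed\ and attributed to Proposition~4.1 of~\cite{PBW}, with the remark that the statement is equivalent to the homology of $\mathcal{M}\circ_\PP^{\mathbb L}\kk$ being concentrated in degree~$0$. Your proof supplies precisely the expected content---a minimal free resolution in right $\PP$-modules, from which freeness is read off once the higher $\Tor$ groups vanish---and you are right that this is the direct right-module transcription of the construction in Lemma~\ref{lemma:diagonal}. The only point you leave implicit is the Nakayama-type step ``$\KK_0\circ_\PP\kk=0\Rightarrow\KK_0=0$'', but this follows by induction on weight in the weight-graded setting the paper works in, and the paper itself does not spell this out either.
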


Note we are simply saying that
the homology of $\mathcal{M}\circ_\PP^{\mathbb L} \kk$ is concentrated in
degree $0$. The second lemma relates the derived
functors $\kk \circ_\PP^\mathbb{L}\QQ$
and $\PP^! \circ_{\QQ^!}^\mathbb{L} \kk$
in case $\PP$ and $\QQ$ are Koszul 
operads. We point the reader to
Theorem 5.8 in~\cite{PoliPosiAMS} which
proves the result for associative
algebras.

\begin{lemma}
Let $f:\PP\longrightarrow \QQ$ be
a morphism of Koszul operads, and let
$f^\antishriek : \QQ^\antishriek \longrightarrow \PP^\antishriek$ be
its dual morphism. For each $j,i\in\NN$
we have a natural isomorphism:
\[ \Tor^{\QQ^\antishriek}_{j-i}(\PP^\antishriek,\kk)_{(i)} \longrightarrow 	\Tor^\PP_i(\kk,\QQ)_{(j)}^*.\] 
\end{lemma}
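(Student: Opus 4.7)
The plan is to compute both Tor groups via Koszul complexes and then exhibit the isomorphism through a perfect pairing of bigraded complexes, in parallel with the algebra case treated in~\cite{PoliPosiAMS}*{Theorem 5.8}. Since $\PP$ is Koszul, the two-sided Koszul complex $(\PP^\antishriek \circ \PP, d_\kappa)$ provides a free right $\PP$-module resolution of $\kk$. Tensoring with $\QQ$ on the right via $f$ yields a complex $(\PP^\antishriek \circ \QQ, d_f)$ whose bidegree $(i,j)$ piece, in terms of (homological, weight), is the summand $\PP^\antishriek_{(i)} \circ \QQ_{(j-i)}$, and whose homology is $\Tor^\PP_*(\kk, \QQ)$ with its natural weight grading.

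Symmetrically, since $\QQ^\antishriek$ is itself Koszul with Koszul dual cooperad $\QQ$ (up to the standard suspension), the Koszul complex for $\QQ^\antishriek$ resolves $\kk$ as a left $\QQ^\antishriek$-module. Tensoring with $\PP^\antishriek$ on the left over $\QQ^\antishriek$, via $f^\antishriek$, produces a complex on the same underlying graded space $\PP^\antishriek \circ \QQ$ but with a twisting differential $d_{f^\antishriek}$ built from the Koszul twist $\QQ \to \QQ^\antishriek$ followed by $f^\antishriek$; its homology computes $\Tor^{\QQ^\antishriek}_*(\PP^\antishriek, \kk)$, with the bidegree $(j-i, i)$ piece reading off the appropriate weight components.

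The crux of the proof is to show that these two bigraded complexes are linear duals of each other, with the bidegree $(j-i, i)$ piece of the second pairing non-degenerately with the bidegree $(i, j)$ piece of the first. This pairing is assembled from the canonical Koszul self-pairings $\PP^\antishriek_{(n)} \otimes \PP^!_{(n)} \to \kk$ and $\QQ^\antishriek_{(n)} \otimes \QQ_{(n)} \to \kk$, combined with the symmetric-sequence pairings arity by arity. The assumption that $f$ arises from a map of quadratic data is exactly what guarantees that $f^\antishriek$ is its orthogonal adjoint on generators via the perpendicular-of-relations construction, and consequently that the twisting differentials $d_f$ and $d_{f^\antishriek}$ are themselves adjoint under the pairing. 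Passing to homology and using that linear duality over a field of characteristic zero is exact then produces the claimed natural isomorphism.

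The main obstacle is the careful bookkeeping of suspension and sign conventions of Koszul duality, so that the bidegree identification $(i, j) \leftrightarrow (j-i, i)$ works out precisely and the adjointness of the two twisting differentials under the constructed pairing is verified. This is the operadic extension of Positselski's algebra calculation and proceeds by unwinding the definitions of the Koszul twisting morphisms $\kappa$ and $\kappa'$; the conceptual content is that Koszul duality interchanges the roles of the pair $(\PP, \PP^\antishriek)$ with $(\QQ^\antishriek, \QQ)$ while preserving the underlying shape of the resulting two-sided Koszul complex.
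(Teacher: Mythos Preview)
Your approach is essentially the same as the paper's: compute both Tor groups via Koszul resolutions and observe that the resulting complexes are linear duals. The paper, however, executes this more directly and avoids a confusion present in your write-up. For the first Tor it does exactly what you do: resolve $\kk$ as a right $\PP$-module by $\PP^{\antishriek}\circ\PP$ and apply $-\circ_\PP\QQ$ to obtain $(\PP^{\antishriek}\circ\QQ,d)$. For the second Tor the paper works with the Koszul dual \emph{operad} $\QQ^!$ rather than the cooperad $\QQ^{\antishriek}$: it resolves $\kk$ as a left $\QQ^!$-module by $\QQ^!\circ\QQ$ and applies $\PP^!\circ_{\QQ^!}-$ to arrive at $(\PP^!\circ\QQ,d')$. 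Since $\PP^!$ is, by definition, the arity-wise linear dual of $\PP^{\antishriek}$ up to the standard operadic suspension, the complex $\PP^!\circ\QQ$ is then \emph{manifestly} the weight-graded dual of $\PP^{\antishriek}\circ\QQ$, and the bidegree identification drops out with no further work.

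Your version asserts that the second complex also has underlying graded object $\PP^{\antishriek}\circ\QQ$ and then attempts to exhibit a pairing of this space with itself built from $\PP^{\antishriek}_{(n)}\otimes\PP^!_{(n)}\to\kk$; these two claims are in tension, since the pairing you describe is between $\PP^{\antishriek}\circ\QQ$ and $\PP^!\circ\QQ$, not a self-pairing. The source of the slip is treating the cooperad $\QQ^{\antishriek}$ as if it were an operad over which one forms module resolutions. Passing to $\QQ^!$ from the outset, as the paper does, removes this ambiguity and makes the adjointness of the two differentials under the pairing a tautology rather than a separate verification requiring the sign-chasing you anticipate.
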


\begin{proof}
It suffices to note that, in the
category of right $\PP$-modules,
$\kk$ admits a resolution 
$\PP^\antishriek\circ \PP \longrightarrow \kk$
while, in the category of 
left $\QQ^!$-modules, $\kk$
admits a resolution 
$\QQ^! \circ \QQ \longrightarrow \kk$. 
Applying the functor
 $-\circ_\PP \QQ$ in the first case
and the functor $\PP^! \circ_{\QQ^!} -$ in the second case, we obtain two complexes
$\PP^\antishriek \circ \QQ$ and $\PP^!\circ \QQ$
 	that are related by the duality
 	described in the statement of the lemma.
\end{proof}

\begin{proof}[Proof of Theorem~\ref{thm:duality}]
The  previous three lemmas immediately imply
the result.
\end{proof} 

\subsection{The HKR morphism}\label{sec:morphism}

In this section we construct, for each map $f:\PP\longrightarrow
\QQ$ and each $\QQ$-algebra $A$, an operadic analogue of the 
classical HKR map . This map relates 
the deformation complex of the $\PP$-algebra $f^*A$ to a certain
space of `differential forms' on $A$ depending functorially,
as in the classical setting, on $\Omega_A^1$. 

\begin{remark} We cannot avoid making the point that, 
while the morphisms of operads in~\cite{PBW, dPBW}
enjoying the PBW property involve a statement about
the pushforward functor $f_!$ on $\PP$-algebras, 
the morphisms of operads we are interested in involve
a statement about the pullback functor $f^*$ on $\QQ$-algebras.
As mentioned in the introduction, this follows the `mantra'
promoted in~\cite{BenWard}.
\end{remark}

To begin, let us take a quasi-free resolution
$\FF=(\PP\circ\YY,d)$ of $\QQ$ in left $\PP$-modules.
Let us recall from Lemma~\ref{lema:isos}
that if $Q= (\QQ(V),d)$ is a cofibrant resolution of 
$A$, then $\Omega_Q^1\otimes_U Q$ is canonically 
isomorphic to $V\otimes \QQ(V)$, while $\Omega_{Q,A}^1$ 
is canonically isomorphic to $V\otimes A$. We will be interested
in applying the functor $\kk\circ_\PP \FF = \YY$ to the space
$\Omega_Q^1\otimes_U Q$, relative to the algebra $Q$, as 
the following definition explains. 

\begin{definition}
We define $\Omega_{\FF,A}^*$, the \new{space of differential forms
on $A$} associated to $\FF$, as the chain complex 
$\YY(V)\otimes \QQ(V)$. 
Its differential is the one induced from $Q$ and $\FF$.
\end{definition} 
%try to find categorical def

The following proposition shows 
that to each resolution we may associate an `HKR map'.

\begin{prop}\label{prop:natHKR}
There exists a map of chain complexes
\[ \HKR_{A} :  
	\Def_*(f^*A) 
 		\longrightarrow
 			 \Omega_{\FF,A}^*,\]
for every choice of resolution $\FF$. Moreover, if $Q\longrightarrow A$
is a cofibrant resolution of $A$ in the category of $\QQ$-algebras, there
is a commutative diagram
$$
\begin{tikzcd}
 \Def_*(f^*Q)\arrow[d]\arrow[r] &
 	  \Def_*(f^*A) \arrow[d] \\
  \Omega_{\FF,Q}^*\arrow[r] & \Omega_{\FF,A}^*.
\end{tikzcd}
$$
\end{prop}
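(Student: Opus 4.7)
The plan is to evaluate the left $\PP$-module resolution $\FF$ on the generating space of a cofibrant $\QQ$-algebra resolution of $A$, producing a preferred cofibrant $\PP$-algebra resolution of $f^*A$, and then to define the HKR map as the comparison on cotangent complexes induced by the augmentation $\FF\to\QQ$.

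Fix first a cofibrant resolution $Q=(\QQ(V),d)\to A$ in $\QQ\hy\Alg$, and build a quasi-free $\PP$-algebra
\[
\FF(V)\ :=\ \bigl(\PP(\YY(V)),\, D\bigr),\qquad D\ :=\ d_\FF+\widetilde d,
\]
where $d_\FF$ extends the differential of $\FF$ to a $\PP$-algebra derivation and $\widetilde d$ is the unique $\PP$-algebra derivation extending a chosen $\kk$-linear lift of $d|_V : V\to\QQ(V)$ across the augmentation $\FF(V)\to\QQ(V)$. One checks $D^2=0$ by combining $d_\FF^2=0$, $d^2=0$, and the fact that $\widetilde d$ lifts $d$ across the left $\PP$-module quasi-isomorphism $\FF\to\QQ$. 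A weight filtration on $\YY$, using that $\FF\to\QQ$ is a quasi-isomorphism and that $\PP$ is $\Sigma$-cofibrant over the characteristic-zero ground field, shows that $\FF(V)\to f^*Q$ is a quasi-isomorphism of $\PP$-algebras; composing with $f^*Q\to f^*A$ provides a cofibrant replacement of $f^*A$ in $\PP\hy\Alg$.

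Using this replacement, Lemma~\ref{lema:isos} identifies $\Omega^1_{\FF(V)}\cong U\FF(V)\otimes\YY(V)$, whence
\[
\Def_*(f^*A)\ =\ \Omega^1_{\FF(V)}\otimes_{U\FF(V)}f^*A\ \cong\ \YY(V)\otimes A,
\]
with the differential induced from $D$. The HKR map $\HKR_A$ is then the natural morphism $\YY(V)\otimes A\to\YY(V)\otimes\QQ(V)=\Omega_{\FF,A}^*$ arising from the augmentation $\FF\to\QQ$ together with the identification of $\Omega_{\FF,A}^*$ as the cotangent model computed from $\FF(V)$ with values in $f^*Q$. Functoriality of the whole construction $Q\mapsto(\FF(V),D)\mapsto\YY(V)\otimes\QQ(V)$ immediately delivers the commutative square in the statement for any cofibrant resolution $Q\to A$, by taking $Q$ as its own trivial resolution on the left column and the given $Q\to A$ on the right.

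The main technical obstacle I expect is the verification that $D^2=0$, together with showing that $\HKR_A$ is independent, up to natural chain homotopy, of the choices involved — namely the cofibrant resolution $Q$, the linear lift defining $\widetilde d$, and the section of $\FF\to\QQ$. These are standard obstruction-theoretic arguments for quasi-free resolutions, which go through in characteristic zero where the underlying symmetric sequences remain $\Sigma$-cofibrant.
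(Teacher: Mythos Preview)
Your construction of the $\PP$-algebra $\FF(V)=(\PP(\YY(V)),D)$ by perturbing the resolution $\FF$ over the differential of $Q=(\QQ(V),d)$ is exactly what the paper does: in their notation this is the perturbed resolution $Z=(\PP\circ\YY(V),d_V+\delta)\to(\QQ(V),d)$. So the strategy is the same.

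The gap is in your final step. You compute the cotangent complex with coefficients in $f^*A$, obtaining $\YY(V)\otimes A$, and then assert a ``natural morphism'' $\YY(V)\otimes A\to\YY(V)\otimes\QQ(V)$. But the only natural map between these goes the \emph{other} way, induced by the augmentation $\QQ(V)\to A$; there is no map $A\to\QQ(V)$, and the augmentation $\FF\to\QQ$ acts on the resolution side, not on the coefficient module. So as written, your $\HKR_A$ does not exist.

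The paper avoids this by taking coefficients in $Q$ rather than in $A$: it computes $\Def_*(f^*A)$ via the model $\Omega^1_Z\otimes_{UZ}Q=(\YY(V)\otimes\QQ(V),\delta_1)$, which is legitimate since $Q\to A$ is a quasi-isomorphism and the cotangent complex is only defined up to quasi-isomorphism anyway. With this choice, both $\Def_*(f^*A)$ and $\Omega_{\FF,A}^*$ have the \emph{same} underlying graded space $\YY(V)\otimes\QQ(V)$, and the HKR map is the tautological comparison---the content being that the differential $\delta_1$ coming from $Z$ is exactly the one obtained by applying $\YY$ relative to $Q$ to the differential on $\Def_*(A)=(V\otimes\QQ(V),\delta_1')$. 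Once you make this one-line change in coefficients, your argument goes through and coincides with the paper's.
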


Observe that, unlike the case where $A$ is a
commutative algebra, the source and the target
of the HKR map do not admit natural operadic
$A$-module structures. 
We call $\HKR_A$ the \new{Hochschild--Kostant--Rosenberg
map} associated to $f$ and the algebra
$A$. 
Observe, moreover, that $\HKR_A$ manifestly depends
on the resolution $\FF=(\PP\circ\YY,d)$ and
on the cofibrant resolution $Q$ of $A$.
This is not a problem for us, since 
the map it induces on homology does not. 
With this at hand, we can define the HKR property:

\begin{definition}\label{def:HKR property}
We say that $f$ satisfies the \new{HKR property}
if the map $\HKR$ is a quasi-isomorphism for every
smooth $\QQ$-algebra. 
\end{definition}

\begin{proof}[Proof Proposition~\ref{prop:natHKR}]
Let $A$ be any $\QQ$-algebra and let us pick a cofibrant 
resolution of $A$ of the form $Q=(\QQ(V),d)$. By the HKR 
property of $f$, the non-dg $\PP$-algebra $(\QQ(V),0)$ 
admits a cofibrant resolution of the form
$(\PP\circ\YY(V),d_V) 
		\longrightarrow \QQ(V)$
and hence perturbing this we obtain a resolution
\[ 
	Z = (\PP\circ\YY(V),d_V+\delta) 
		\longrightarrow (\QQ(V),d)
									\]
of the $\PP$-algebra $(\QQ(V),d)$. If we use this resolution
to compute the cotangent complex of the $\PP$-algebra $Q 
= (\QQ(V),d)$, we obtain a complex of the form
\[ 
	\Def_*(f^*A) = 
		\Omega_Z^1\otimes_U Q = 
			(\YY(V) \otimes \QQ(V) , \delta_1)
												\]
and, tautologically, the cotangent complex of the 
$\QQ$-algebra $A$ may be computed through the complex
\[ 
	\Def_*(A) = 
		\Omega_Q^1\otimes_U Q 
			= (V\otimes \QQ(V),\delta_1'). 
											\] 
The fact all of the constructions and isomorphisms above are 
natural, with the exception of the
perturbation process, means that, in fact, the complex $(\YY(V) \otimes 
\QQ(V) , \delta_1)$ 	is obtained from the complex	$(V\otimes 
\QQ(V),\delta_1')$ by the endofunctor $\YY$ relative to the 
algebra $\QQ(A)$. The commutativity
of the diagram is then immediate,
although we cannot promote
$\Omega_{\FF,A}^*$ to a bona-fide
functor and hence $\HKR_A$ to a
natural transformation.
\end{proof}

With this at hand, our second main result
for maps satisfying the HKR property is
the following `operadic HKR theorem', which
we now prove with a series of lemmas. Its 
immediate application is the corollary that
follows it, which we will use heavily later on.

\begin{theorem}\label{thm:morphism} 
Every left Koszul map between Koszul operads
has the HKR property.
\end{theorem}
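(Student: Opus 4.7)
The plan is to emulate the three-step structure of the classical proof of Theorem~\ref{thm:classical HKR} outlined at the end of Section~\ref{sec:com HKR}. Fix a smooth $\QQ$-algebra $A$, a cofibrant resolution $p\colon Q = (\QQ(V), d) \to A$, and a diagonally pure quasi-free resolution $\FF = (\PP \circ \YY, d)$ of $\QQ$ in left $\PP$-modules, which is available by the left Koszul hypothesis. Proposition~\ref{prop:natHKR} provides a commutative square whose vertical arrows are $\HKR_Q$ and $\HKR_A$ and whose horizontal arrows are induced by $p$. The strategy is to show that the two horizontal arrows and $\HKR_Q$ are quasi-isomorphisms; since a commutative square of chain complexes with three quasi-isomorphic sides forces the fourth to be a quasi-isomorphism, this yields the theorem.

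The top horizontal arrow $\Def_*(f^*Q) \to \Def_*(f^*A)$ is a quasi-isomorphism by homotopy invariance of the cotangent complex: using the common cofibrant $\PP$-algebra resolution $Z = (\PP \circ \YY(V), d_V + \delta)$ for both $f^*Q$ and $f^*A$, the map takes the form $\Omega_Z^1 \otimes_{UZ} Q \to \Omega_Z^1 \otimes_{UZ} A$, which is a quasi-isomorphism because $\Omega_Z^1$ is free over $UZ$ and $p$ is a quasi-isomorphism. For the bottom arrow, Proposition~\ref{prop:reg} yields that $p_!\Omega_Q^1 \to \Omega_A^1$ is a quasi-isomorphism; since the construction $\Omega_{-}^1 \mapsto \Omega_{\FF,-}^*$ is functorial via $\YY$ and the functor associated to the symmetric sequence $\YY$ preserves quasi-isomorphisms in characteristic zero, the bottom arrow $\Omega_{\FF,Q}^* \to \Omega_{\FF,A}^*$ is also a quasi-isomorphism.

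The central step, where diagonal purity of $\FF$ is essential, is showing that $\HKR_Q$ is a quasi-isomorphism for the cofibrant $\QQ$-algebra $Q$. Following the construction in Proposition~\ref{prop:natHKR}, both $\Def_*(f^*Q)$ and $\Omega_{\FF,Q}^*$ have underlying graded module $\YY(V) \otimes \QQ(V)$; the difference lies in the differentials, with the former carrying the perturbation term arising from the recursive construction of $Z$ while the latter reads off only the ``bare'' contributions from $\FF$ and from $Q$. I would filter both complexes by the weight grading inherited from $\YY$ and $V$. Diagonal purity ensures that the differential of $\FF$ strictly lowers weight and homological degree in lockstep, so the perturbation contributes only to strictly lower weight terms and vanishes on the associated graded; on $E^0$ the HKR comparison reduces to the tautological identification, and the spectral sequence converges by virtue of the bounded-below weight grading to deliver the desired quasi-isomorphism. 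This mirrors the PBW-filtration argument used in the proof of Proposition~\ref{prop:cofibrant HKR => qi}, which is no coincidence: by Theorem~\ref{thm:duality}, diagonal purity is precisely the Koszul dual of the PBW property. The main difficulty lies in the bookkeeping required to verify that $\delta$ interacts with the weight filtration as claimed and that diagonal purity is exactly the condition making it vanish on the associated graded.
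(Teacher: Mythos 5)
Your outer architecture coincides with the paper's: reduce to the commutative square of Proposition~\ref{prop:natHKR}, use smoothness (via Proposition~\ref{prop:reg} and the quasi-isomorphism invariance of $\Omega^*_{\FF,-}$) to handle the two arrows induced by $p$, and conclude once the HKR map is known to be a quasi-isomorphism for the cofibrant algebra $Q$. The gap is in that last, central step. The paper proves it (Lemma~\ref{lema:cobars}) not by a direct filtration on $\YY(V)\otimes\QQ(V)$ but by running the cotangent complex through the bar--cobar machinery: the key identity $\Bar_\phi f^*=g^!\Bar_\psi$ of Lemma~\ref{lem:fundamental lemma} converts $\Def_*(f^*A)$ into $f^*A\otimes g^!\Bar_\psi\Omega_\psi(V)$, and one then contracts $\Bar_\psi\Omega_\psi(V)\to V$ \emph{inside} $g^!$, which is legitimate precisely because the Koszul dual map satisfies PBW and hence $g^!$ preserves quasi-isomorphisms (Corollary~\ref{cor:fund lemma}). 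This is the one place where left Koszulness and the Koszul hypothesis on $\PP$ and $\QQ$ actually do work, via Theorem~\ref{thm:duality}. You cite that theorem only as an aside ("no coincidence") and never invoke the dual PBW property in your argument.

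Concretely, your claim that "on $E^0$ the HKR comparison reduces to the tautological identification" is where the argument fails to close. Filtering by the weight of the $\YY$-factor does kill the perturbation $\delta$ and reduces you to free $\QQ$-algebras with zero differential --- this reduction also appears at the end of the paper's Lemma~\ref{lema:cobars} --- but the free case is not tautological. Even for $Q=\QQ(V)$ with $d=0$, the complex $\Omega_Z^1\otimes_{UZ}Q\cong\YY(V)\otimes\QQ(V)$ carries the nontrivial differential induced by the resolution differential $d_V\colon\YY(V)\to\PP(\YY(V))$ (its terms with exactly one positive-degree $\YY$-letter survive the base change to $\QQ(V)$), and the assertion that this complex has homology $\YY(V)\otimes\QQ(V)$ is exactly the statement that feeding the left-module resolution $\PP\circ\YY\to\QQ$ into the cotangent-complex functor computes the right answer. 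That requires a K\"unneth-type argument --- in the paper, the PBW property of $g^!$ --- and is not supplied by diagonal purity alone; diagonal purity only guarantees minimality of $\FF$ and the collapse you describe. You flag this bookkeeping as "the main difficulty" without resolving it, so the proof is incomplete precisely at its load-bearing point.
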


\begin{cor}[PBW criterion]\label{cor:PBW}
Every map between Koszul operads
whose dual has
the PBW property satisfies the HKR property.
\end{cor}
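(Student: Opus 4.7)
The statement to prove, Corollary \ref{cor:PBW}, is an immediate consequence of the two results that precede it in the excerpt. Specifically, Theorem \ref{thm:duality} gives the equivalence between the left Koszul property of a map $f\colon\PP\longrightarrow\QQ$ of Koszul operads and the PBW property of its Koszul dual map $f^\antishriek\colon\QQ^\antishriek\longrightarrow\PP^\antishriek$, while Theorem \ref{thm:morphism} asserts that every left Koszul map between Koszul operads has the HKR property.

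The plan is therefore simply to chain these two implications. Assuming we are given a map $f\colon\PP\longrightarrow\QQ$ between Koszul operads such that the Koszul dual map enjoys the PBW property, I would first invoke Theorem \ref{thm:duality} to conclude that $f$ is left Koszul, i.e.\ that $\QQ$ admits a diagonally pure quasi-free resolution in the category of left $\PP$-modules. Then I would invoke Theorem \ref{thm:morphism} applied to this datum to conclude that $f$ satisfies the HKR property in the sense of Definition \ref{def:HKR property}, which is exactly what needs to be shown.

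There is no technical obstacle at this stage, since all the work is already done by the previous two theorems; the corollary is essentially a repackaging. The only subtle point worth mentioning explicitly in the proof is that the notion of Koszul dual map implicitly used here is the one associated to maps coming from a map of quadratic data (as stipulated in the \textbf{Notation and conventions}), so that both $f$ and $f^\antishriek$ are well-defined as morphisms of Koszul operads and the correspondence $f\longleftrightarrow f^\antishriek$ is an involution on the class of such maps; this ensures Theorem \ref{thm:duality} applies in the direction needed. Once this is noted, the proof is no more than one sentence combining the two theorems.
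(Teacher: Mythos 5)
Your proposal is correct and matches the paper's proof exactly: the corollary is obtained by chaining Theorem~\ref{thm:duality} (PBW property of the dual map implies $f$ is left Koszul) with Theorem~\ref{thm:morphism} (left Koszul implies HKR). The paper's own proof is the one-line "This follows immediately from Theorem~\ref{thm:duality}," so your slightly more explicit version, including the remark about maps of quadratic data, is the same argument.
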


\begin{proof}
This follows immediately from Theorem~\ref{thm:duality}.
\end{proof}

\begin{remark}
It is natural to wonder whether a converse to
this last corollary exists. The condition that
the resolution $\FF$ be diagonally pure
 makes a certain
spectral sequence collapse and gives our result,
modulo the computations and various lemmas
that we have made use of. In a generic case,
one should expect an ``HKR spectral sequence''
to exist coming from the resolution $\FF$
and, in favourable cases, one may obtain
an HKR theorem without requiring that
$\FF$ be diagonally pure. It would certainly
be interesting to have an example of
this behaviour.
\end{remark}

The proof of Theorem~\ref{thm:morphism} relies on the following 
fundamental lemma relating the two different constructions 
arising from the twisting morphisms $\phi$ and $\psi$ associated
to the Koszul operads $\PP$ and~$\QQ$, respectively.

\begin{lemma}\label{lem:fundamental lemma}
Let $\PP^{\antishriek}$ and $\QQ^{\antishriek}$ be the Koszul dual 
cooperads to $\PP$ and $\QQ$. For the commutative diagram of
maps of (co)operads and twisting morphisms
$$		
	\begin{tikzcd}
		\PP \arrow[r, "f"] & \QQ	\\
		\PP^{\emph\antishriek}
			\arrow[u,"\phi"] 
			\arrow[r,"g",swap] & 
		\QQ^{\emph\antishriek}
			\arrow[u,"\psi",swap],
	\end{tikzcd}
$$
there is a natural isomorphism of functors 
$\Bar_\phi f^*= g^!\Bar_\psi \colon \QQ\hy\Alg 
\longrightarrow \PP^\antishriek\hy\Cog$, where $f^*$ denotes the restriction of scalars functor and $g^!$ denotes the coinduction functor. 
\end{lemma}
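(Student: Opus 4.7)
The plan is to verify the claimed natural isomorphism pointwise: for each $\QQ$-algebra $A$, I will identify both $\Bar_\phi f^*A$ and $g^!\Bar_\psi A$ as the same $\PP^\antishriek$-coalgebra structure on the cofree coalgebra $\PP^\antishriek(A)$. Naturality in $A$ will then be automatic from the evident functoriality of every ingredient.

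I would first unpack the coinduction functor $g^!$, which by construction is right adjoint to the restriction functor $g_*\colon \PP^\antishriek\hy\Cog \to \QQ^\antishriek\hy\Cog$ that sends a structure map $C\to\PP^\antishriek(C)$ to its post-composition with $g$. Since $g_*$ does not alter underlying objects, a Yoneda argument gives a canonical isomorphism $g^!\QQ^\antishriek(V) \cong \PP^\antishriek(V)$ of $\PP^\antishriek$-coalgebras for every $V$: both sides represent the functor $\Hom_{\PP^\antishriek}(-,\PP^\antishriek(V))\cong\Hom(-,V)$ via the cofree adjunctions on each side. In particular, the underlying graded $\PP^\antishriek$-coalgebra of $g^!\Bar_\psi A$ is precisely $\PP^\antishriek(A)$, which agrees with that of $\Bar_\phi f^*A$.

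Having identified the graded coalgebras, I would then match the two differentials. The coderivation differential of $\Bar_\phi f^*A$ on $\PP^\antishriek(A)$ is uniquely determined by its projection onto $A$, which is the composite $\PP^\antishriek(A) \to \PP(A) \to A$ built from $\phi$ and the $\PP$-algebra structure of $f^*A$ (i.e.\ the $\QQ$-action pulled back along $f$). Under the isomorphism of the previous paragraph, the coderivation on $g^!\Bar_\psi A$ is similarly determined by its projection onto $A$, which unravels to the composite $\PP^\antishriek(A) \to \QQ^\antishriek(A) \to \QQ(A) \to A$ coming from $g$, $\psi$, and the $\QQ$-structure of $A$. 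The commuting square of twisting morphisms, i.e.\ the identity $f\phi=\psi g$, makes these two projections onto $A$ coincide, and therefore the unique coderivations they extend to are equal as well.

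The main obstacle I expect is the careful bookkeeping around coalgebras over a cooperad and the adjoint pair $g_*\dashv g^!$: one must verify that $g^!$ exists in the (conilpotent) coalgebra category under consideration and that it preserves cofree objects, which is formal but hinges on the conventions used for $\PP^\antishriek\hy\Cog$. Once these foundational points are settled, the rest of the argument is a direct diagram chase driven entirely by the compatibility $f\phi=\psi g$ between the twisting morphisms.
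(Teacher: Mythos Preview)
Your proposal is correct and follows essentially the same route as the paper: the paper also identifies the underlying graded coalgebras of both sides with the cofree $\PP^\antishriek$-coalgebra on $A$ by the ``composition of right adjoints is a right adjoint'' argument (which is exactly your Yoneda identification $g^!\QQ^\antishriek(V)\cong\PP^\antishriek(V)$), and then invokes the commutativity $f\phi=\psi g$ to match the bar differentials. Your write-up is more explicit about the coderivation step and about the foundational point that $g^!$ exists on conilpotent coalgebras, but the structure of the argument is the same.
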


\begin{proof}
Ignoring the additional differentials produced by the bar 
construction, $\Bar_\psi$ produces the cofree conilpotent 
$\QQ^\antishriek$-coalgebra functor on the underlying chain 
complex and $g^!$ is the right adjoint of the corestriction of 
scalars functor $g_*$. Since the composition of right adjoints 
is a right ajoint, we conclude that up to bar-differentials both 
$\Bar_\phi f^*$ and $g^!\Bar_\psi$ correspond to the cofree 
conilpotent $\QQ^\antishriek$-coalgebra on the underlying space.
The commutativity of the diagram above guarantees that both 
differentials are the same.
\end{proof}

\begin{cor}\label{cor:fund lemma}
In the conditions of the previous lemma, if the
coinduction functor $g^!$ satisfies the PBW property, 
there is a quasi-isomorphism of functors
\[
	f^*\Omega_\psi 
		\simeq \Omega_\phi g^!
			\colon \QQ^\antishriek\hy\Cog 
				\longrightarrow
					\PP\hy\Alg.
\]
\end{cor}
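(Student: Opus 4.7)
The plan is to chain together the two bar-cobar adjunctions associated to the twisting morphisms $\phi$ and $\psi$, using the identity provided by Lemma~\ref{lem:fundamental lemma} as the bridge. Fix $C \in \QQ^\antishriek\hy\Cog$. Because $\QQ$ is Koszul, the unit
\[
\eta_C : C \longrightarrow \Bar_\psi \Omega_\psi C
\]
of the bar-cobar adjunction for $\psi$ is a quasi-isomorphism. Applying $g^!$ and invoking Lemma~\ref{lem:fundamental lemma}, we obtain a natural map
\[
g^!\eta_C : g^! C \longrightarrow g^! \Bar_\psi \Omega_\psi C = \Bar_\phi f^* \Omega_\psi C.
\]

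The crucial step is to show that $g^! \eta_C$ is itself a quasi-isomorphism; this is where the PBW hypothesis enters the argument. Mirroring the classical fact, recorded in the proof that the HKR map is a quasi-isomorphism on free commutative algebras, that the universal enveloping coalgebra $\mathscr U^c$ preserves quasi-isomorphisms thanks to the PBW theorem of~\cite{michaelis1980lie}, the PBW hypothesis on $g^!$ supplies an endofunctor $\mathcal X$ of the underlying category such that $g^!(-)^\# \cong \mathcal X((-)^\#)$ naturally. Since we work over a field of characteristic zero, this polynomial-type functor is exact, so $g^!$ preserves quasi-isomorphisms and $g^!\eta_C$ is a quasi-isomorphism.

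To finish, we apply the cobar functor $\Omega_\phi$ and postcompose with the bar-cobar counit $\epsilon : \Omega_\phi \Bar_\phi \longrightarrow \id$, which is a natural quasi-isomorphism because $\PP$ is Koszul. In our situation both $g^! C$ and $\Bar_\phi f^* \Omega_\psi C$ are, at the level of underlying objects, cofree $\PP^\antishriek$-coalgebras (the first by PBW, the second by construction), so $\Omega_\phi$ preserves the quasi-isomorphism $\Omega_\phi g^!\eta_C$ between them. We thus obtain a natural zigzag
\[
\Omega_\phi g^! C \longrightarrow \Omega_\phi \Bar_\phi f^* \Omega_\psi C \longrightarrow f^* \Omega_\psi C
\]
of quasi-isomorphisms, giving the claimed natural quasi-isomorphism $\Omega_\phi g^! \simeq f^* \Omega_\psi$. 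The principal obstacle is the implication from the PBW property of $g^!$ to the preservation of quasi-isomorphisms; once that is unpacked, the remainder is formal bar-cobar yoga.
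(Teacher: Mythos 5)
Your proof is correct and follows essentially the same route as the paper's: apply $\Omega_\phi g^!$ to the unit quasi-isomorphism $\id\to\Bar_\psi\Omega_\psi$, rewrite $g^!\Bar_\psi$ as $\Bar_\phi f^*$ via Lemma~\ref{lem:fundamental lemma}, and conclude with the counit $\Omega_\phi\Bar_\phi\to\id$. The only difference is that where the paper simply cites \cite{dPBW}*{Corollary 1.1} for the fact that the PBW property of $g^!$ forces it to preserve quasi-isomorphisms, you unpack that citation (exactness of the Schur-type endofunctor $\mathcal X$ in characteristic zero), which is the intended argument.
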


\begin{proof}
Our hypothesis on $g^!$ implies it preserves 
quasi-isomorphisms~\cite{dPBW}*{Corollary 1.1}.
This implies that there are quasi-isomorphisms 
of functors
\[
\Omega_\phi g^!  
	\stackrel{\sim}{\longrightarrow} 
		\Omega_\phi g^!\Bar_\psi\Omega_\psi 
			= \Omega_\phi \Bar_\phi f^*\Omega_\psi 
		\stackrel{\sim}{\longrightarrow}
	f^*\Omega_\psi.
\]
This is what we wanted.
\end{proof}

Recall from the discussion after Proposition~\ref{par:kahler} 
that the operadic $A$-module $\Omega^1_A$ of K\"ahler differentials 
on $A$ is the quotient of the free operadic
$A$-module generated by symbols $da$ for $a\in A$ subject to 
a generalized Leibniz rule.

\begin{lemma}\label{lema:cobars} The
map $\HKR_A$ of Proposition~\ref{prop:natHKR} is a quasi-isomorphism for $\QQ$-algebras
obtained as a cobar construction.
\end{lemma}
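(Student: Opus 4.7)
The plan is to exploit that $A = \Omega_\psi C$ is already quasi-free over $\QQ$, so it serves as its own cofibrant resolution with $V = C$, making the target $\Omega_{\FF,A}^* = \YY(C) \otimes \QQ(C)$ completely explicit. The strategy is then to produce an equally explicit cofibrant $\PP$-resolution of $f^*A$ by crossing over to the Koszul dual side. To this end I would invoke Corollary~\ref{cor:fund lemma}: the left-Koszul hypothesis on $f$ is, by Theorem~\ref{thm:duality}, equivalent to the PBW property of its Koszul dual, which guarantees that the coinduction functor $g^!$ preserves quasi-isomorphisms. This yields a quasi-isomorphism of $\PP$-algebras
\[
	\Omega_\phi g^! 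C
		\stackrel{\sim}{\longrightarrow}
			f^*\Omega_\psi C
				= f^*A,
\]
and since $\Omega_\phi g^! C = (\PP(g^!C), d)$ is quasi-free, it is cofibrant and may be used to compute the cotangent complex. Lemma~\ref{lema:isos} then gives
\[
	\Def_*(f^*A)
		\simeq (g^!C)^{\#} \otimes \PP(g^!C)^{\#},
\]
with differential induced from the cobar differential.

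Next I would identify $(g^!C)^{\#}$ with $\YY(C)$: the PBW property for $g^!$ exhibits the underlying object of $g^!C$ as $\mathcal X(C)$ for some Schur endofunctor $\mathcal X$, and minimality of $\FF$ together with the Tor comparison underlying Theorem~\ref{thm:duality} forces $\mathcal X = \YY$. Combined with the quasi-isomorphism $\PP(\YY(C)) = (\PP\circ\YY)(C) \stackrel{\sim}{\longrightarrow} \QQ(C)$ coming from $\FF$ resolving $\QQ$ in left $\PP$-modules, the right-hand side above becomes $\YY(C)\otimes\QQ(C) = \Omega_{\FF,A}^*$.

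The remaining and most delicate step is to verify that this composite quasi-isomorphism is, up to chain homotopy, the map $\HKR_A$ of Proposition~\ref{prop:natHKR}. Unwinding that construction, the perturbed quasi-free $\PP$-resolution $(\PP\circ\YY(V), d_V + \delta) \to (\QQ(V), d)$ built there must, by the uniqueness clause in the lifting of maps between quasi-free modules, coincide under the identifications above with the resolution $\Omega_\phi g^!C$ produced by the fundamental lemma; in particular the perturbation $\delta$ recovers the cobar differential on $\Omega_\phi g^!C$. This coherence check, matching two a priori distinct presentations of the same cofibrant replacement, is the main obstacle, though all the essential homological input has already been packaged into Corollary~\ref{cor:fund lemma}, Lemma~\ref{lema:isos}, and the duality of Theorem~\ref{thm:duality}.
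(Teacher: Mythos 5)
Your proposal is correct and follows essentially the same route as the paper: the paper also computes $\Def_*(f^*A)$ via a cobar-type cofibrant $\PP$-resolution (it uses $\Omega_\phi\Bar_\phi f^*A$, which Lemma~\ref{lem:fundamental lemma} identifies with your $\Omega_\phi g^!\Bar_\psi\Omega_\psi(V)\simeq\Omega_\phi g^!(V)$), invokes the PBW property of $g^!$ to contract $\Bar_\psi\Omega_\psi(V)$ to $V$, uses Lemma~\ref{lema:isos} to identify both sides with $A\otimes g^!(V)$, and then checks compatibility with $\HKR_A$ by a word-length filtration rather than your homotopy-uniqueness-of-lifts argument. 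Both treatments leave the final coherence check at a comparable level of detail, so your sketch matches the paper's in substance.
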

 
\begin{proof}
For a $\QQ$-algebra of the form $A=
\Omega_{\psi}(V)$, with $V$ a $\QQ^\antishriek$-coalgebra, 
let us compute the quasi-isomorphism type of $\Def_*(f^*A)$. 
Taking the cofibrant resolution $Q =\Omega_\phi\Bar_\phi f^* A$
of $A$ in the category of $\PP$-algebras given by the 
bar-cobar resolution, we have that:
\begin{align*}
\Def_*(f^*A) &= (\Omega^1_{Q} \otimes_{UQ} Q,d)   
	& \text{by Definition~\ref{def:cotangent}}  \\
	&= (\Bar_\phi f^*A\otimes Q, d^t)   
	&\text{by Lemma~\ref{lema:isos}}  \\
	&= \left(f^*A\otimes g^!\Bar_\psi\Omega_\psi(V), d^t\right)   	
	&\text{by Lemma~\ref{lem:fundamental lemma}}   \\
	&\hspace{-0.3 em}\stackrel{\sim}\leftarrow
		\left(f^*A   \otimes g^!(V), d^t\right)    
	&\text{since $g^!$ is PBW}. 
\end{align*}
Here $d^t$ denotes the only non-internal differential which
is transported along the isomorphism of graded vector spaces 
provided by Lemma~\ref{lema:isos}. 

On the other hand, since $A$ is quasi-free, $\Omega^1_A 
= \left(U_\QQ(A) \otimes V, d^t\right)$, endowed with an
additional transferred differential. It follows that as 
an operadic $A$-module, 
\[ 
	\Omega^*_{\FF,A} 
		=\left(A\otimes g^!(V), d^t\right),									\] 
so in particular as chain complexes $\Omega^*_{\FF,A}$
and $\Der_*(f^*A)$ are quasi-isomorphic.
It remains to see that this quasi-isomorphism gives 
an quasi-inverse to $\HKR_A$. Keeping track of the 
quasi-isomorphisms above, one can see that filtering 
$\Def_*(f^*A)$ by the appropriate word lengths
we recover the quasi-inverse at the level of the 
associated graded complexes, which reduces our
claim to that of free $\QQ$-algebras (with zero differential).
This is what we wanted. 
\end{proof}

\begin{prop}\label{prop:forms}
Suppose that $A$ is a smooth $\QQ$-algebra and
that the functor $g^!$ preserves quasi-isomorphisms.
Then $\Omega_{\FF,A}^*$ is a complex with 
homology $g^!_A\HH_0(A,A)$ concentrated in degree zero. 
\end{prop}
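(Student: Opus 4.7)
The plan is to relate the complex $\Omega_{\FF,A}^*$ to the coinduction functor $g^!$ applied to the cotangent complex of $A$, and then collapse the latter to degree zero using smoothness.

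First, fix a cofibrant resolution $p\colon Q = (\QQ(V),d) \longrightarrow A$, so that by definition $\Omega_{\FF,A}^* = (\YY(V)\otimes \QQ(V),\delta)$ for the transferred differential $\delta$. Just as in the chain of identifications performed in the proof of Lemma~\ref{lema:cobars}, the combined Koszul-duality package of Lemma~\ref{lem:fundamental lemma} and Corollary~\ref{cor:fund lemma}, together with the diagonal purity of $\YY$ (which forces it to encode, on the nose, the Koszul-dual cooperadic structure governing $g^!$), produces a quasi-isomorphism
\[ \Omega_{\FF,A}^* \;\simeq\; g^!_A\bigl(\Omega_Q^1\otimes_{UQ}Q\bigr), \]
where $g^!_A$ denotes the coinduction functor applied relative to $A$.

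Next, I would invoke smoothness. By Proposition~\ref{prop:reg}, $A$ is quasi-smooth, and combined with the vanishing $\HH^1_\QQ(A,M)=0$ characterising smoothness one obtains that the natural map $\Omega_Q^1\otimes_{UQ}Q \longrightarrow \Omega_A^1 = \HH_0(A,A)$ is a quasi-isomorphism of operadic $A$-modules, with $\Omega_A^1$ viewed as concentrated in homological degree zero. Since $g^!$ preserves quasi-isomorphisms by hypothesis, applying $g^!_A$ to this map yields a quasi-isomorphism
\[ \Omega_{\FF,A}^* \;\simeq\; g^!_A(\Omega_A^1) \;=\; g^!_A\HH_0(A,A), \]
with the right-hand side concentrated in degree zero, which is the desired conclusion.

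The main technical obstacle lies in the first step: carefully identifying $\Omega_{\FF,A}^*$, built from the generating sequence $\YY$ of the left $\PP$-module resolution $\FF$ of $\QQ$, with $g^!_A$ applied to the cotangent complex. This requires tracking the transferred differentials through the bar-cobar mechanism on both sides of Lemma~\ref{lem:fundamental lemma} and invoking the diagonal purity of $\YY$ precisely at the point where the Koszul-dual cooperadic structure enters. Everything else is then a formal consequence of smoothness and of the hypothesis that $g^!$ is homotopically well behaved.
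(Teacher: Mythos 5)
Your proposal is correct and follows essentially the same route as the paper: identify $\Omega_{\FF,A}^*$ as $g^!_A$ applied (relative to $A$) to the cotangent complex $\Omega_Q^1\otimes_{UQ}Q$, use smoothness (via Proposition~\ref{prop:reg}) to collapse that complex onto $\HH_0(A,A)$ in degree zero, and then invoke the hypothesis that $g^!$ preserves quasi-isomorphisms. The paper's proof is just a terser version of this, treating the first identification as immediate from the construction of $\Omega_{\FF,A}^*$ rather than flagging it as the technical crux.
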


\begin{proof}
This follows immediately since $\Def_*(A,A) = 
Q\otimes_U \Omega_Q^1$ is a complex with homology 
concentrated in degree $0$, where it equals
$	\HH_0(A,A) = 
		A\otimes_U \Omega_A^1$,
while $g_A^!\HH_0(A,A) = 
 A\otimes g^!(\Omega_A^1)$, 
so all we need to conclude is the fact 
$g^!$ preserves quasi-isomorphisms.
\end{proof}

\begin{proof}[Proof of Theorem \ref{thm:morphism}]
Since the algebra $A$ is smooth, we know by Proposition~\ref{prop:reg} that the morphism
$p_!\Omega_Q^1 
		\longrightarrow 
			\Omega_A^1 $
is a quasi-isomorphism, and hence so is the map
\[ 
	Q\otimes_{UQ}\Omega_Q^1 
		\longrightarrow A
			\otimes_{UA}\Omega_A^1.
									\] 
By Proposition~\ref{prop:forms}, the resulting map 
$\Omega^*_{\FF,\cof } \to \Omega^*_{\FF,A}$ is a quasi-isomorphism. We have shown in Lemma~\ref{lema:cobars} that $\HKR_{\cof A}$
is a quasi-isomorphism (for $\cof A$ is cofibrant) and 
since $A$ is smooth,  the commutativity of the following diagram
$$
\begin{tikzcd}[column sep = large]
\Def_*(f^*\cof ,f^*\cof ) 
	\arrow[r,"\HKR_{\cof }","\sim"']\arrow[d,"\sim"]
		& \Omega^*_{\FF,\cof }\arrow[d,"\sim"] \\
			\Def_*(f^*A,f^*A) \arrow[r,"\HKR_A",swap]
					& \Omega^*_{\FF,A}
						\end{tikzcd}
								$$	
shows that $\HKR_{A}$ is a quasi-isomorphism.  
This is what we wanted.
\end{proof}
\definecolor{clay}{rgb}{0.8, 0.33, 0.0}

\subsection{The case of cohomology}

As before, let us take a quasi-free resolution 
$(\PP\circ\YY,d)$ of $\QQ$ where $\YY$ is a diagonal 
endofunctor and, for the $\QQ$-algebra $A$, let 
$(\QQ(V),d)$ be a quasi-free resolution.
Recall that in this case the underlying graded
vector space to $\Def_\QQ^*(A)$ is given by 
$\hom(V,\QQ(V))$.

\begin{definition}\label{def:multivector fields}
Let $A$ be a $\QQ$-algebra. We define $\Tpoly{A}$, 
the \new{poly-vector fields on $A$ relative to~$f$}, 
to be the chain complex \[\Tpoly{A} \coloneqq 
(\hom(\YY(V),\QQ(V)),\delta).\]
\end{definition}

The dual result for tangent cohomology of smooth
$\QQ$-algebras is the following. The proof is quite
similar to the case of cotangent homology, so we 
only sketch the details. We will make use of the
`smaller complex' $\Tpoly{Q,A} \coloneqq \hom(\YY(V),A)$. 
The non-internal part of the differential makes use of 
the map $(\QQ(V),d)\to A$.

\begin{theorem}\label{thm:main cohomology}
	If $f$ is left Koszul then for every smooth 
	$\QQ$-algebra $A$ the map
	\[ 
		\HKR^A :  
			\Def^*(f^*A) 
				\longrightarrow 
							\Tpoly A  
										\] 
is a quasi-isomorphism.
\end{theorem}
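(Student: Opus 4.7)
The proof follows the three-step strategy of Theorem~\ref{thm:morphism}, dualized from homology to cohomology. First, I would construct $\HKR^A$ in direct analogy with Proposition~\ref{prop:natHKR}: for a quasi-free $\QQ$-resolution $Q = (\QQ(V), d) \to A$, the left Koszul hypothesis and Lemma~\ref{lemma:diagonal} produce a diagonally pure quasi-free resolution $(\PP\circ\YY, d) \to \QQ$, which after perturbation yields a cofibrant $\PP$-resolution $Z = (\PP\circ\YY(V), d_V + \delta)$ of $f^*Q$. Computing $\Def^*(f^*A) = \Der(Z, f^*A) = \hom_{UZ}(\Omega_Z^1, f^*A)$ and invoking Lemma~\ref{lema:isos}, the underlying graded space identifies with $\hom(\YY(V), \QQ(V))$, matching $\Tpoly A$; the HKR map is then the natural comparison between the perturbed and the purely internal differentials.

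Second, I would verify the cobar case, mirroring Lemma~\ref{lema:cobars}. For $A = \Omega_\psi W$ with $W$ a $\QQ^\antishriek$-coalgebra, take $\Omega_\phi\Bar_\phi f^*A$ as cofibrant $\PP$-resolution and chain the identifications dual to those in the proof of Lemma~\ref{lema:cobars}:
\[ \Def^*(f^*A) \simeq \hom(\Bar_\phi f^*A, f^*A) = \hom\bigl(g^!\Bar_\psi\Omega_\psi W,\, f^*A\bigr) \stackrel{\sim}{\longleftarrow} \hom(g^!(W), f^*A), \]
where the last quasi-isomorphism uses that $g^!$ enjoys the PBW property (by Theorem~\ref{thm:duality}) and hence preserves quasi-isomorphisms. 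A parallel unfolding of $\Tpoly A$ yields the same complex up to the identification $\YY(W) \cong g^!(W)$, and filtering by word length reduces the verification that $\HKR^A$ is a quasi-isomorphism to the trivial case of a free $\QQ$-algebra.

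Third, to pass from cobar to smooth algebras, take a cofibrant $\QQ$-resolution $p : Q \to A$ by a cobar algebra (which exists by~\cite{LV}*{Corollary 11.3.5, Proposition B.6.6}) and consider the commutative square
\[
\begin{tikzcd}[column sep = large]
\Def^*(f^*Q) \arrow[r, "\HKR^Q", "\sim"'] & \Tpoly Q \\
\Def^*(f^*A) \arrow[r, "\HKR^A", swap] \arrow[u, "\sim"] & \Tpoly A \arrow[u, "\sim"].
\end{tikzcd}
\]
The top horizontal is a quasi-isomorphism by the cobar case; the left vertical holds because $Q$ serves as a common cofibrant resolution of both $Q$ and $A$; and the right vertical is the cohomological counterpart of Proposition~\ref{prop:forms}. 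The main obstacle is establishing this last counterpart: starting from quasi-smoothness $p_!\Omega_Q^1 \xrightarrow{\sim} \Omega_A^1$ (Proposition~\ref{prop:reg}), one must convert it into a quasi-isomorphism of $\hom$ complexes using Lemma~\ref{lema:BK} together with the preservation of quasi-isomorphisms by $g^!$. This step is subtler than its homological counterpart because $\hom$ does not interact with $g^!$ as cleanly as the tensor product does, and one may need to dualize the relevant operadic modules before applying these tools.
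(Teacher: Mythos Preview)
Your strategy matches the paper's exactly through the first two steps: constructing $\HKR^A$ via the perturbed resolution and verifying the cobar case by the dual of Lemma~\ref{lema:cobars}. The difference lies in the final step, and it is precisely the ``subtler'' point you flag.

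Your two-row square does not quite work as drawn: the map $Q\to A$ induces $\Def^*(f^*Q)\to\Def^*(f^*A)$ and $\Tpoly Q\to\Tpoly{Q,A}$ (covariantly in the target), so your vertical arrows point the wrong way, and there is in general no direct comparison $\Tpoly A \to \Tpoly Q$. The paper resolves this by inserting an intermediate row built from the auxiliary complex $\Tpoly{Q,A}\coloneqq \hom(\YY(V),A)$ and using the zig-zag
\[
\begin{tikzcd}[column sep = large]
\Def^*(f^*\cof ,f^*\cof ) \arrow[r,"\HKR^{\cof }","\sim"']\arrow[d,"\sim"] & \Tpoly \cof \arrow[d,"\sim"] \\
\Def^*(f^*Q,f^*A) \arrow[r] & \Tpoly{Q,A} \\
\Def^*(f^*A,f^*A) \arrow[u,"\cong",swap]\arrow[r,"\HKR^A",swap] & \Tpoly A\arrow[u,"\sim",swap]
\end{tikzcd}
\]
On the left, the top arrow is a quasi-isomorphism because $\hom_{UQ}(\Omega_Q^1,-)$ is exact and $Q\to A$ is one; the bottom arrow is an isomorphism since $Q$ is already a cofibrant replacement of $A$. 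On the right, both maps into $\Tpoly{Q,A}$ are quasi-isomorphisms by the cohomological analogue of Proposition~\ref{prop:reg}, which is where smoothness enters. This zig-zag through a common target with coefficients in $A$ is exactly the device that dissolves the obstacle you identified --- no dualization of operadic modules or delicate interaction of $\hom$ with $g^!$ is needed.
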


\begin{lemma} 
The	conclusion of Theorem~\ref{thm:main cohomology} 
holds for cobar algebras.
\end{lemma}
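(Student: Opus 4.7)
My plan is to mirror the proof of Lemma~\ref{lema:cobars} for cotangent homology, adapting it to the contravariant setting. Let $A = \Omega_\psi(V)$ for a $\QQ^\antishriek$-coalgebra $V$, and let $Q = \Omega_\phi\Bar_\phi f^*A$ be the bar-cobar cofibrant replacement of $f^*A$ as a $\PP$-algebra. By Definition~\ref{def:tangent} and Lemma~\ref{lema:isos}, the tangent complex is
\[
  \Def^*(f^*A) = \hom_{UQ}(\Omega^1_Q, Q) \cong \hom(\Bar_\phi f^*A,\ Q),
\]
where the right-hand side is equipped with the twisted differential dual to the one in the homology case. Applying Lemma~\ref{lem:fundamental lemma} rewrites $\Bar_\phi f^*A$ as $g^!\Bar_\psi\Omega_\psi(V)$, and since $f$ is left Koszul, Theorem~\ref{thm:duality} and Corollary~\ref{cor:fund lemma} tell us that $g^!$ preserves quasi-isomorphisms; combined with the quasi-isomorphism $\Bar_\psi\Omega_\psi V \to V$ coming from the counit of the bar-cobar adjunction, this yields a quasi-isomorphism $g^!V \to g^!\Bar_\psi\Omega_\psi V$.

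The next step is to transport this quasi-isomorphism through $\hom(-, Q)$, obtaining $\Def^*(f^*A) \simeq \hom(g^!V, Q)$, and then to compare with the target complex $\Tpoly A = \hom(\YY(V), \QQ(V))$. By minimality of the resolution $\FF = (\PP\circ\YY, d)$ and the diagonal purity condition, the generating sequence $\YY$ identifies (as a weight-graded symmetric sequence) with the one underlying $g^!$ via the duality described in the proof of Theorem~\ref{thm:duality}. Tracing through the construction of $\HKR^A$ then shows that under these identifications the map corresponds, up to quasi-isomorphism, to the tautological map between two resolutions of the same object, and in particular becomes a quasi-isomorphism.

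As in the homology case, the argument concludes by filtering both $\Def^*(f^*A)$ and $\Tpoly A$ by word length in the generators $V$, thereby reducing to the associated graded, where the $\QQ$-algebra structure is free with trivial internal differential. In that setting the HKR map is manifestly an isomorphism on the nose, so collapsing the resulting spectral sequence gives the result for general cobar algebras.

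The main obstacle I anticipate is ensuring that the contravariant functor $\hom(-, Q)$ sends the quasi-isomorphism $g^!V \to g^!\Bar_\psi\Omega_\psi V$ to a quasi-isomorphism in the opposite direction. This is not automatic, but the weight grading bails us out: in each fixed weight, the relevant complexes are bounded below and built from $\Sigma$-projective pieces (we work in characteristic zero and $\PP, \QQ$ are $\Sigma$-cofibrant), so the weight-graded pieces are quasi-isomorphisms of cofibrant objects to which Lemma~\ref{lema:BK} (or a direct cofibrancy argument) applies. The remaining verification that $\YY(V) \cong g^!V$ as graded functors is bookkeeping encoded in the Koszul duality dictionary already deployed in Theorem~\ref{thm:duality}.
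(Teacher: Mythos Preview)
Your proposal is correct and follows essentially the same approach as the paper's own proof: both take $A=\Omega_\psi(V)$, use the bar-cobar resolution $Q=\Omega_\phi\Bar_\phi f^*A$, compute $\Def^*(f^*A)\cong\hom(\Bar_\phi f^*A,Q)=\hom(g^!\Bar_\psi\Omega_\psi(V),Q)$ via Lemmas~\ref{lema:isos} and~\ref{lem:fundamental lemma}, and then use the PBW property of $g^!$ to pass to $\hom(g^!(V),Q)=\hom(\YY(V),Q)$ before finishing with the word-length filtration argument. You are in fact more explicit than the paper about the contravariance issue (the paper simply asserts the quasi-isomorphism ``since $g^!$ is PBW''), though your invocation of Lemma~\ref{lema:BK} is not quite the right tool---the weight-wise boundedness/cofibrancy argument you also mention is what actually does the work.
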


\begin{proof}
	
	For a $\QQ$-algebra of the form 
	$A=\Omega_{\psi}(V)$, with $V$ a $\QQ^\antishriek$-coalgebra, 
	let us compute the quasi-isomorphism type of $\Der^*(f^*A)$. 
	Taking the cofibrant resolution $Q$ given by the bar-cobar 
	resolution $\Omega_\phi\Bar_\phi f^* A$
	of $f^*A$ in the category of $\PP$-algebras, we have that:
	\begin{align*}
	\Def^*_\PP(f^*A) &= 
	\hom_{Q}(\Omega^1_{UQ}, Q)  
		&\text{by Definition~\ref{def:tangent}}  \\
	&= \hom(\Bar_\phi f^*A,Q)
	&\text{by Lemma \ref{lema:isos}}  \\
	&= \hom( g^! \Bar_\psi \Omega_\psi(V), Q) 
	&\text{by Lemma~\ref{lem:fundamental lemma}}   \\
	&\stackrel{\sim}{\to} \hom(g^!(V), Q)	
	&\text{$g^!$ is PBW}. \\
	& = \hom( \YY(V), Q) 
	\end{align*}
	Proceeding as in the case of homology, we obtain a quasi-inverse
	to the HKR map.
	\end{proof}

\begin{proof}[Proof of Theorem \ref{thm:main cohomology}]
	If $A$ is a smooth algebra, then for any cofibrant replacement $p\colon Q\stackrel{\sim}{\longrightarrow} A$ there are induced quasi-isomorphisms
	\[\Der(Q,Q) \longrightarrow \Der(Q,A)\longleftarrow \Der(A,A)\]
since $\hom_{UQ}(\Omega^1_Q,-)$ is exact.
Furthermore, similarly to Proposition \ref{prop:reg} one can show that  $\Tpoly{Q}\to \Tpoly{Q,A}$ and $\Tpoly{A} \to \Tpoly{Q,A}$ are quasi-isomorphisms.
The result follows from the commutativity of the diagram
	
	$$
	\begin{tikzcd}[column sep = large]
	\Def^*(f^*\cof ,f^*\cof ) 
	\arrow[r,"\HKR^{\cof }","\sim"']\arrow[d,"\sim"]
	& \Tpoly \cof \arrow[d,"\sim"] \\
	\Def^*(f^*Q,f^*A) \arrow[r]
	& \Tpoly{Q,A}
	\\
	\Def^*(f^*A,f^*A) \arrow[u,"\cong",swap]\arrow[r,"\HKR^A",swap]
	& \Tpoly A\arrow[u,"\sim",swap]
	\end{tikzcd}
	$$	
	where we now use coefficients to be able to draw the
	zig-zag of quasi-isomorphisms.
\end{proof}

\begin{remark}
The classical cohomological version of the HKR theorem establishes 
not only that the Hochschild cohomology and the space
of poly-vector fields of a smooth commutative algebra are isomorphic 
as chain complexes, but that they are isomorphic Lie algebras. 
In the operadic setting, tangent cohomology is also a Lie algebra via 
the bracket defined by the commutator of derivations. 
Unless the endofunctor $\YY$ carries some extra structure, it 
is not clear a priori how to endow $\Tpoly{A}$ with a Lie algebra
structure that makes our map an isomorphism of Lie algebras.
However, we point the reader to Theorem~\ref{thm:filtered} below
where $\YY$ can be taken to be an operad itself, and where we 
explain how in the classical case we, \emph{do} recover the
Lie algebra structure on poly-vector fields.
\end{remark}

\pagebreak
\section{Computations and examples}\label{sec:examples}

\subsection{Revisiting the classical HKR theorem}

Let us show that our formalism
recovers the classical HKR theorem
exactly.

\begin{prop}
The morphism $\A\longrightarrow \C$ enjoys
the HKR property, and the induced map
\[ \HKR_A : \Def_*(f^*A) \longrightarrow 
\Omega_A^* \] 
coincides, up to a suspension, with
the classical HKR quasi-isomorphism.
\end{prop}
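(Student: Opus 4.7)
The HKR property for $f\colon \A\longrightarrow \C$ is free: its Koszul dual map is the inclusion $\C^!=\Lie\longrightarrow \A = \A^!$, which enjoys the PBW property by the classical Poincaré–Birkhoff–Witt theorem (the enveloping algebra is free as a symmetric module, with basis functor $V\longmapsto S(V)$). Corollary~\ref{cor:PBW} then applies verbatim. The diagonally pure resolution $\FF$ produced by Theorem~\ref{thm:duality} can be taken to be the Koszul complex $\FF = (\A\circ \C^{\antishriek},d)$, whose generating symmetric sequence $\YY = \C^{\antishriek}$ is, up to the standard operadic suspension, the cocommutative cooperad $S^c(\,\cdot\,[-1])$.

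To match the abstract map $\HKR_A$ with the classical formula, I would unwind the construction of Proposition~\ref{prop:natHKR} in this case. Proposition~\ref{prop:forms} together with the identification $\YY\cong S^c(\,\cdot\,[-1])$ gives, for a smooth commutative algebra $A$, that $\Omega_{\FF,A}^* \simeq g^!_A \HH_0(A,A) = A\otimes S^c(\Omega_A^1[-1])$, which is exactly (a suspension of) the classical module $\Omega_A^* = S_A(\Omega_A^1[-1])$ of algebraic differential forms. On the source, the remark after Definition~\ref{def:cotangent} already records $\Def_*(f^*A) = s^{-1}C_*(A,A)^+$ when we compute the $\A$-cotangent complex via the bar--cobar resolution $\Omega\Bar A$, so both ends of the map agree, up to the suspension, with the classical ones.

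For the map itself I would proceed as in Lemma~\ref{lema:cobars}. Using the cofibrant resolution $Q = \Omega_\phi\Bar_\phi f^*A$, the fundamental Lemma~\ref{lem:fundamental lemma} gives $\Bar_\phi f^*A \cong g^!\Bar_\psi A$, and the PBW quasi-isomorphism $g^!\Bar_\psi \Omega_\psi(V) \stackrel{\sim}{\to} g^!(V)$ is, in this setting, precisely the classical antisymmetrization $\mathscr U^c \mathcal L \mathcal C(V[-1]) \stackrel{\sim}{\to} S^c(V[-1])$ that appears in the proof of Theorem~\ref{thm:classical HKR} given in Section~\ref{sec:com HKR}. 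Tracking the generators through this zig-zag recovers the assignment $a[a_1|\cdots|a_n]\longmapsto a\,da_1\cdots da_n$, i.e.\ the classical HKR map (up to the global suspension inherent in the operadic conventions).

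The only real obstacle is bookkeeping: one has to reconcile the operadic suspensions built into $\C^{\antishriek}$ and into the Koszul twisting morphisms $\phi,\psi$ with the conventions used for the cyclic Hochschild complex in Section~\ref{sec:com HKR}, and then verify that, under these identifications, the diagram expressing naturality of $\HKR_A$ in Proposition~\ref{prop:natHKR} reduces on generators to the classical formula. No new conceptual input beyond the classical PBW theorem is required.
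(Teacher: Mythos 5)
Your argument is correct and follows the paper's own route: Koszul duality (Corollary~\ref{cor:PBW} via Theorem~\ref{thm:duality}) reduces the HKR property of $\A\longrightarrow\C$ to the classical PBW theorem for $\Lie\longrightarrow\A$, and the identification with the classical map is obtained by unwinding Lemma~\ref{lema:cobars} through the antisymmetrization quasi-isomorphism of Section~\ref{sec:com HKR}. The one slip is the label of the generating sequence: the diagonally pure resolution is $(\A\circ\Lie^{\antishriek},d)$, not $(\A\circ\C^{\antishriek},d)$ --- it is $\Lie^{\antishriek}(V)=S^c(V[-1])[1]$ that is the (suspended) cocommutative cooperad, whereas $\C^{\antishriek}$ is the suspended co-Lie cooperad and would give the wrong space of forms if taken literally; since you immediately substitute the correct functor $S^c(\,\cdot\,[-1])$, the rest of your computation is unaffected.
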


\begin{proof}
We offer two points of view:
\begin{tenumerate}
\item We can produce a resolution of the form 
$(\A\circ \L^\antishriek,d)$ coming from the 
functorial Koszul resolution on free commutative
algebras
\[ g_V : (T(S^c(V[-1])[1]),d) \longrightarrow S(V), \]
which is manifestly diagonally pure, since the homological 
degree in $S^c(V[-1])[1]= \Lie^\antishriek(V)$ 
coincides with the weight degree.

\item The Koszul dual morphism is PBW, that is, 
$\A$ is a free right $\Lie$-module, so 
Theorem~\ref{thm:duality} implies the result.
Moreover, the lemma preceding it shows that we
may take $\YY=\Lie^\antishriek$ as in the 
previous item; we need only pay attention to 
the shift in homological degree. 
\end{tenumerate}

We conclude, in particular, that $\Tor^\A(\kk,\C)
\simeq \Lie^\antishriek$ as weight graded dg 
$\Sigma$-modules, so we may take 
$V\longmapsto \YY(V) = S^c(V[-1])[1]$
as the functor witnessing the classical HKR
property. The  HKR map for a commutative algebra~$A$ induces an isomorphism
\[
s^{-1}\HHC_*(A,A)^+=\HH_*(A,A) \longrightarrow
 \Omega_A^* =S_A^c(\Omega_A^1[-1])[1]. 
\]
It differs from the classical HKR isomorphism
map by a desuspension and in that we 
do not have, in degree zero, the identity map
of $A=\HHC_0(A)$ onto $A$. Otherwise, our 
formalism recovers the HKR map exactly.
\end{proof}
 
\subsection{New HKR theorems}\label{sec:new HKR theorems}

\textit{{Permutative algebras.}} 
A permutative algebra~\cite{Chapo} is an associative algebra 
$A$ such that for every $x,y$ and $z\in A$, we have that
$x(yz) = x(zy)$. Permutative algebras are algebras over 
a binary quadratic operad, denoted $\Perm$ which is the
linearisation of a set operad $\Perm_0.$
The Koszul dual operad of $\Perm$ is the operad $\preLie$
controlling pre-Lie algebras. Both these algebraic structures
play an important role in the study of operadic deformation 
theory~\cite{calaque2019moduli, TwistingProcedure,
PreLieDeformation}.

Clearly, every commutative algebra is a permutative algebra
via the same product. Since a permutative product is 
in particular associative, there is a factorisation of 
the map of operads $f\colon \A \longrightarrow\C$ via 
the permutative operad: 
$
 \A \longrightarrow 
 	\Perm \stackrel{\psi}{\longrightarrow} \C.
$

\begin{prop}
The map $\psi \colon \Perm \to \C$ enjoys the HKR property, with
generating sequence $\mathsf{RT}_{\neq 1}$ that assigns a set
$I$ to the set of rooted trees with vertices labeled by $I$
for which no vertex has exactly one child.
\end{prop}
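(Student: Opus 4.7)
The plan is to invoke Corollary~\ref{cor:PBW}. The Koszul dual operads are $\Perm^\antishriek = \preLie$ and $\C^\antishriek = \Lie$, and the Koszul dual of $\psi$ is the canonical inclusion $\iota\colon \Lie \hookrightarrow \preLie$ which sends the Lie bracket to the antisymmetrization of the pre-Lie product. As already recalled in the introduction, this map enjoys the PBW property by the main theorem of \cite{PBW}, so Corollary~\ref{cor:PBW} immediately gives that $\psi$ has the HKR property.

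To identify the generating sequence $\YY$, I would use that, by Lemma~\ref{lemma:diagonal} together with the proof of Theorem~\ref{thm:duality}, a minimal diagonally pure resolution has $\YY \cong \Tor^\Perm(\kk,\C)$; the duality lemma preceding Theorem~\ref{thm:duality} then identifies this, up to the appropriate homological shift, with $\Tor^\Lie_0(\preLie,\kk) = \preLie\circ_\Lie\kk$, the higher Tor groups vanishing because $\iota$ is PBW so that $\preLie$ is free as a right $\Lie$-module on this symmetric sequence.

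It remains to establish an isomorphism of symmetric sequences
\[ \preLie\circ_\Lie \kk \;\cong\; \mathsf{RT}_{\neq 1}. \]
By the Chapoton--Livernet theorem, $\preLie(n)$ is spanned by rooted trees on $[n]$, and the heuristic is that any maximal chain of unary vertices in such a tree can be straightened into a left-normed Lie bracket of the vertex labels along the chain, via the Oudom--Guin expansion of the pre-Lie product on trees. Concretely, I would exhibit a bijection $\mathsf{RT}_{\neq 1}\circ\Lie \to \preLie$ sending a tree $\tau\in\mathsf{RT}_{\neq 1}$ together with a decoration of its vertices by Lie words to the rooted tree obtained by replacing each vertex by the corresponding chain; the inverse contracts each maximal unary chain of a rooted tree to a single vertex, recording the deleted labels as a Lie word at that vertex. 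The main obstacle, and the step I would carry out most carefully, is the verification that this assignment is well-defined and bijective: well-definedness requires checking compatibility with the right $\Lie$-module actions on both sides, while bijectivity can be established either by induction on the number of vertices of the tree or, more conceptually, by comparing arity-wise dimensions via the generating series for rooted trees with no unary vertex together with the freeness of $\preLie$ over $\Lie$ already obtained.
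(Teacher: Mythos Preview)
Your invocation of Corollary~\ref{cor:PBW} via the PBW property of the Koszul dual map $\Lie\to\preLie$ is exactly the paper's argument, and your identification of the generating sequence with $\preLie\circ_\Lie\kk$ through the duality lemma is correct.

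Where you diverge is in the last paragraph: the paper does not attempt to prove $\preLie\circ_\Lie\kk\cong\mathsf{RT}_{\neq 1}$ by hand but simply cites~\cite{dotsenko2020schur} for it. Your proposed bijection, however, does not work as written. A maximal chain of unary vertices of length $k$ in a rooted tree records an \emph{ordered} sequence of $k$ labels, so contracting such chains and remembering the order produces an element of $\mathsf{RT}_{\neq 1}\circ\A$, not $\mathsf{RT}_{\neq 1}\circ\Lie$; conversely, a Lie word is not a single chain but a vector in a space of dimension $(k-1)!$, so ``replacing a vertex by the corresponding chain'' is not well defined on basis elements. A quick dimension check confirms the mismatch: the generators $\XX$ of $\preLie$ as a right $\Lie$-module satisfy $\dim\XX(2)=1$ (from $\dim\preLie(2)=2=\dim\Lie(2)+\dim\XX(2)$), whereas there are no rooted trees on two labelled vertices without a unary vertex. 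So the combinatorial model requires more care than your sketch suggests, and establishing it is genuinely the content of the cited reference rather than a routine verification; you should cite it rather than attempt the bijection here.
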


\begin{proof}
The Koszul dual map to $\psi$ is the anti-symmetrisation map 
$\phi \colon \Lie \to \preLie$. In \cite{PBW} this map was 
shown to satisfy the PBW property. Moreover,
in~\cite{dotsenko2020schur}, it was show that the generators of 
$\preLie$ as a right $\Lie$-module are given by the functor
$\mathsf{RT}_{\neq 1}$ as in the statement of the proposition. 
The result follows from Corollary~\ref{cor:PBW}.
\end{proof}

Theorems \ref{thm:morphism} and \ref{thm:main cohomology}
allow us to compute the permutative (co)tangent 
(co)homology of commutative algebras.

\begin{cor}\label{cor:perm}
Let $A$ be a smooth commutative algebra. 

\begin{titemize}
\item 
The cotangent homology $\HH_*(\psi^*A)$  is isomorphic to the algebra of ``tree-wise'' 
differential forms $\mathsf{RT}_{\neq 1}(\Omega_A^*)$ over the classical differential
forms of $A$.

\item 
Dually, the tangent cohomology $\HH^*(\psi^*A)$ is 
isomorphic to the algebra of ``tree-wise'' poly-vector 
fields $\mathsf{RT}_{\neq 1}^\vee(\Tpoly A)$.\qed
\end{titemize}
\end{cor}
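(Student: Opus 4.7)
The strategy is a direct application of the machinery built up in the paper to the map $\psi$. Since the preceding proposition already establishes that $\psi\colon\Perm\longrightarrow\C$ enjoys the HKR property, with generating sequence $\YY=\mathsf{RT}_{\neq 1}$, the plan is simply to feed this into Theorems~\ref{thm:morphism} and~\ref{thm:main cohomology} and to unwind the resulting expressions for $\Omega_{\FF,A}^*$ and $\Tpoly A$.

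First I would apply Theorem~\ref{thm:morphism} to get the quasi-isomorphism $\HKR_A\colon\Def_*(\psi^*A)\longrightarrow \Omega_{\FF,A}^*$, using any diagonally pure resolution $\FF=(\Perm\circ\YY,d)\longrightarrow \C$ whose existence is guaranteed by Lemma~\ref{lemma:diagonal} and Theorem~\ref{thm:duality} applied to $\phi\colon\L\longrightarrow\preLie$. Because $\phi$ is PBW by~\cite{PBW} and the generators of $\preLie$ as a right $\L$-module are identified in \cite{dotsenko2020schur} with $\mathsf{RT}_{\neq 1}$, we may take $\YY=\mathsf{RT}_{\neq 1}$. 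The coinduction functor $g^!$ associated to the Koszul dual map therefore agrees, as a Schur functor, with $\mathsf{RT}_{\neq 1}$, and preserves quasi-isomorphisms thanks to~\cite{dPBW}*{Corollary 1.1}.

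Second, I would invoke Proposition~\ref{prop:forms}: since $A$ is smooth as a commutative algebra, we have $\HH_0(A,A)=\Omega_A^1$, and the complex $\Omega_{\FF,A}^*$ has homology concentrated in degree zero, where it is $g^!_A\,\Omega_A^1$. Unpacking the relative Schur-functor notation, this is exactly the module spanned by rooted trees whose vertices are decorated by elements of $\Omega_A^1$ and of the algebra $A$, with no vertex having exactly one child; repackaging the $A$-scalars and the wedges of $\Omega_A^1$'s sitting at the vertices in terms of the classical forms $\Omega_A^*=S_A(\Omega_A^1[-1])$ yields the description $\mathsf{RT}_{\neq 1}(\Omega_A^*)$ of the statement. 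The cohomological half is entirely parallel: Theorem~\ref{thm:main cohomology} produces $\HKR^A\colon\Def^*(\psi^*A)\longrightarrow\Tpoly A$, and the dual of the argument above, replacing $\mathsf{RT}_{\neq 1}$ by its Schur dual $\mathsf{RT}_{\neq 1}^\vee$ and $\Omega_A^*$ by $\Tpoly A$, gives the second bullet point.

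The main obstacle I expect is not at the level of invoking the HKR theorems, which is essentially automatic here, but rather in the bookkeeping of the final combinatorial identification: one must check carefully that $A\otimes \mathsf{RT}_{\neq 1}(\Omega_A^1)$, equipped with its natural $A$-module structure coming from the construction of $\Omega_{\FF,A}^*$, really is the Schur functor $\mathsf{RT}_{\neq 1}$ evaluated on the classical forms $\Omega_A^*$. This is where the precise description of $\preLie$-generators in terms of rooted trees, and the way scalars from $A$ propagate through the tree, need to be tracked; once that is done, the statement follows by putting the pieces together.
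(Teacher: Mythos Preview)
Your proposal is correct and follows exactly the paper's intended route: the corollary carries a \qed\ in the statement because it is a direct application of Theorems~\ref{thm:morphism} and~\ref{thm:main cohomology} together with the preceding proposition identifying $\YY=\mathsf{RT}_{\neq 1}$, and your unpacking of $\Omega_{\FF,A}^*$ and $\Tpoly A$ via Proposition~\ref{prop:forms} is precisely what is implicit there.
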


\begin{cor}
For every smooth commutative algebra $A$ there is a 
quasi-isomorphism
\[ 
\Def_{\Perm}^*(A) \longrightarrow 
\mathsf{RT}_{\neq 1}^\vee(\Tpoly A).
\]
Moreover, the natural map $\Def_{\Perm}^*(A) \longrightarrow	
\Def_{\A}^*(A) $ induces, in homology, the natural map 
\[ 
\mathsf{RT}_{\neq 1}^\vee(\Tpoly A)
 \longrightarrow 
 \Tpoly A
 \]
given by the augmentation 
$\mathsf{RT}_{\neq 1}^\vee\longrightarrow \kk$.\qed
\end{cor}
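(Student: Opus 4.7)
The plan is to deduce both statements from Theorem~\ref{thm:main cohomology} and the previous proposition, which established that $\psi\colon \Perm \longrightarrow \C$ enjoys the HKR property with generating sequence $\YY = \mathsf{RT}_{\neq 1}$. Applying that theorem to a smooth commutative algebra $A$ with cofibrant resolution $Q=(\C(V),d)\longrightarrow A$ produces a quasi-isomorphism $\Def_{\Perm}^*(A) \longrightarrow \hom(\mathsf{RT}_{\neq 1}(V),\C(V))$. Since $A$ is smooth, the cohomological counterpart of Proposition~\ref{prop:forms} used in the proof of Theorem~\ref{thm:main cohomology} lets us identify the right-hand side, up to quasi-isomorphism, with $\mathsf{RT}_{\neq 1}^\vee(\Tpoly A)$, where $\Tpoly A$ now denotes the classical polyvector fields of the commutative algebra $A$; this proves the first claim.

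For the second claim, the task is to establish the commutativity of the square
\[
\begin{tikzcd}[column sep = large]
\Def_{\Perm}^*(A) \arrow[r] \arrow[d] & \mathsf{RT}_{\neq 1}^\vee(\Tpoly A) \arrow[d] \\
\Def_{\A}^*(A) \arrow[r] & \Tpoly A
\end{tikzcd}
\]
in the derived category, where the right vertical arrow is the augmentation $\mathsf{RT}_{\neq 1}^\vee \twoheadrightarrow \mathrm{Id}$ obtained by projecting onto the single-vertex tree, and the horizontal arrows are the respective HKR maps. The strategy is to choose resolutions compatibly with the factorisation $f = \psi \circ g$ where $g\colon \A \to \Perm$: take the diagonally pure resolution $(\Perm \circ \mathsf{RT}_{\neq 1}, d)$ of $\C$ in left $\Perm$-modules, and the Koszul resolution $(\A \circ \Lie^{\antishriek}, d)$ of $\C$ in left $\A$-modules. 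Extending scalars along $g$, the first resolution yields a quasi-free left $\A$-module resolution of $\C$, and a lift of $\mathrm{id}_\C$ produces a map between these resolutions which, tracked on generators, realises the inclusion $\mathrm{Id} \hookrightarrow \mathsf{RT}_{\neq 1}$ picking out the single-vertex tree. Feeding this through the HKR construction of Lemma~\ref{lema:cobars} (in its cohomological form) and dualising yields exactly the claimed augmentation on the right column, while on the left column it recovers the natural map $\Def_{\Perm}^*(A) \to \Def_{\A}^*(A)$ coming from restriction of scalars along $g$.

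The main obstacle is the naturality bookkeeping: one must verify that changing the left-module resolution of $\QQ$ and changing the operad $\PP$ along the factorisation $f = \psi g$ fit together into the same commutative diagram at the chain level, or at least up to the derived identifications at our disposal. Because both resolutions involved are minimal and diagonally pure, and because the Koszul duality identifications (cf.\ Theorem~\ref{thm:duality} and Lemma~\ref{lemma:diagonal}) already match $\Lie^\antishriek$ and $\mathsf{RT}_{\neq 1}$ with the right-module generators of $\A$ and $\preLie$ over $\Lie$ described in~\cite{dotsenko2020schur}, this step reduces to matching weight and homological gradings on both sides, after which the commutativity of the square follows formally.
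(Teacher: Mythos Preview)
The paper gives no proof for this corollary (it is marked \qed), treating both assertions as immediate consequences of Theorem~\ref{thm:main cohomology}, Corollary~\ref{cor:perm}, and the functoriality of the HKR construction under the factorisation $\A \xrightarrow{g} \Perm \xrightarrow{\psi} \C$. Your write-up is therefore considerably more explicit than what the paper offers, and the overall strategy---compare the two HKR maps through a map of left-module resolutions of $\C$ and read off the induced map on generators---is correct and is exactly the mechanism the paper is implicitly invoking.

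One point to tighten. When you say ``extending scalars along $g$, the first resolution yields a quasi-free left $\A$-module resolution of $\C$'', the direction and the conclusion are both off: $g\colon \A \to \Perm$, so passing a left $\Perm$-module to a left $\A$-module is \emph{restriction}, and $g^*(\Perm \circ \mathsf{RT}_{\neq 1},d)$ is a left $\A$-module resolution of $\C$ but is not quasi-free over $\A$. What you actually need is the lift of $\mathrm{id}_\C$ from the quasi-free $\A$-resolution $(\A \circ \Lie^{\antishriek},d)$ into this restricted resolution, which exists by the standard lifting lemma for quasi-free modules. Tracking that lift on generators and then dualising yields the augmentation $\mathsf{RT}_{\neq 1}^\vee \to \kk$, as you claim. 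With this correction, your argument goes through and matches the paper's intent.
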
 

\medskip
\textit{{Enriched pre-Lie algebras of Dotsenko and Foissy.}} 
In~\cite{DotsenkoFoissy} the authors define a functor  that
assigns to every Hopf cooperad $\mathcal{C}$ an operad 
$\preLie_\mathcal{C}$ of $\mathcal{C}$-enriched pre-Lie
algebras. The example we are interested in is the following,
where this functor recovers the operad 
of pre-Lie algebras and the operad of braces algebras. Such
braces algebras and related structures, 
conceived originally in~\cite{BracesTornike},
appeared in the literature
in several opportunities~\cite{GetzlerGaussManin,Gerstenhaber1995,Lada2005}, and
are relevant in deformation theory~\cite{PreLieDeformation},
for example. 

\begin{tenumerate}
\item 
If $\mathcal{C}=u\C^*$ is the Hopf cooperad of \emph{unital} 
commutative coalgebras, one obtains the operad $\preLie$
governing pre-Lie algebras.
\item
If $\mathcal{C}=u\A^*$ is the Hopf cooperad of \emph{unital} 
associative coalgebras, one obtains the operad $\Br$
governing brace algebras.
\item The unit map $u\C^*\longrightarrow u\A^*$ gives the map
$g:\preLie\longrightarrow \Br$ constructed in~\cite{DailyLada}.
\end{tenumerate}

By Proposition 2 in that article, every connected Hopf
cooperad  $\mathcal{C}$ admits a structure of associative
algebra for the Cauchy product in the category symmetric
sequences ---what are usually called twisted associative 
algebras---, in such a way that every morphism of Hopf
cooperads $\mathcal{C}\longrightarrow \mathcal{C}'$
induces a morphism of twisted associative algebras. 
\begin{definition}[Proof of Theorem 1 in~\cite{DotsenkoFoissy}]
Given a species $\mathcal X$, there is a species of enriched
trees, which we write $\mathcal T_R$, where
each vertex is decorated by an element of $\mathcal{C}'$
with the condition that every vertex of maximal depth is 
decorated by an element of $\mathcal X$. Similarly,
$\mathcal T_L$ is the species of 
$\mathcal{C}'$-enriched trees, with the condition that
the root vertex is decorated by an element of $\mathcal X$.
\end{definition}
The main result of~\cite{DotsenkoFoissy} is as follows.

\begin{theorem*} Let $\varphi : \mathcal{C}\longrightarrow 
\mathcal{C}'$ be a morphism of connected Hopf cooperads and
let us consider $\mathcal{C}'$ as a $\mathcal{C}$-bimodule 
by viewing $\varphi$ as a map of twisted associative algebras.  
Then:
\begin{tenumerate}
\item if $\mathcal{C}'$ is left $\mathcal{C}$-free with 
generators $\mathcal X$ then the operad $\preLie_{\mathcal{C}'}$
is free as a left $\preLie_\mathcal{C}$-module with generators
$\mathcal T_L$ and,
\item if $\mathcal{C}'$ is right $\mathcal{C}$-free with 
generators $\mathcal X$ then the operad $\preLie_{\mathcal{C}'}$
is free as a right $\preLie_\mathcal{C}$-module with generators
$\mathcal T_{R}$.\hfill\qed
\end{tenumerate}
\end{theorem*}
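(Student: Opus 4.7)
The plan is to establish each freeness statement by exhibiting, for every $\mathcal{C}'$-decorated rooted tree in $\preLie_{\mathcal{C}'}(I)$, a canonical dissection into the compositional data describing $\preLie_\mathcal{C} \circ \mathcal{T}_L$ (respectively the analogous assembly with $\mathcal{T}_R$), and then checking that this dissection inverts the natural left (respectively right) $\preLie_\mathcal{C}$-action. First I would fix the combinatorial description of $\preLie_\mathcal{C}(I)$ as the space spanned by rooted trees on $I$ in which each vertex $v$ is labelled by an element of $\mathcal{C}(\text{children}(v))$, and I would make explicit that the left $\preLie_\mathcal{C}$-action on $\preLie_{\mathcal{C}'}$ grafts an outer $\mathcal{C}$-decorated tree onto an inner $\mathcal{C}'$-decorated tree, combining decorations at the grafting vertex through the twisted associative multiplication $\mathcal{C} \otimes \mathcal{C}' \to \mathcal{C}'$ coming from the module structure.

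Under the hypothesis $\mathcal{C}' \cong \mathcal{C} \cdot \mathcal{X}$ for the Cauchy product, every decoration $c'_v \in \mathcal{C}'(\text{children}(v))$ decomposes, on basis elements, as a pair $(A_v \sqcup B_v,\; c^\mathcal{C}_v \otimes c^\mathcal{X}_v)$ with $c^\mathcal{C}_v \in \mathcal{C}(A_v)$ and $c^\mathcal{X}_v \in \mathcal{X}(B_v)$. I would use this per-vertex splitting to two-colour the edges of a decorated tree $T$, and declare that two vertices lie in the same block whenever they are joined by a path of \emph{$\mathcal{X}$-edges}. The outer $\preLie_\mathcal{C}$-tree then has blocks as vertices, with an outer edge between blocks $E_1$ and $E_2$ precisely when the top vertex of $E_2$ is a $\mathcal{C}$-child of some vertex of $E_1$; its decoration at $E$ is built by aggregating the $c^\mathcal{C}_v$ for $v \in E$ through the iterated twisted associative product on $\mathcal{C}$, which is exactly what the Hopf structure supplies. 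On each block, the $\mathcal{X}$-connected subtree endowed with the $\mathcal{X}$-parts of the decorations is a $\mathcal{T}_L$-tree, since its root carries an $\mathcal{X}$-decoration by construction.

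The step I expect to be the main obstacle is the coherence and bijectivity of this assembly: one must verify that the packaging of the $\mathcal{C}$-parts of several vertices of a single block into one element of $\mathcal{C}$ on the set of child blocks is independent of the auxiliary ordering used and genuinely inverse to the compositional map $\preLie_\mathcal{C} \circ \mathcal{T}_L \to \preLie_{\mathcal{C}'}$ obtained from grafting. Equivariance under the left $\preLie_\mathcal{C}$-action is then a direct unwinding of the definitions once the bijection is in place. Part (2) is handled by the formally dual strategy: right-freeness $\mathcal{C}' \cong \mathcal{X} \cdot \mathcal{C}$ is used to two-colour edges \emph{bottom-up}, so that the resulting blocks have their $\mathcal{X}$-decorations concentrated at the vertices of maximal depth, which is precisely the defining condition on $\mathcal{T}_R$.
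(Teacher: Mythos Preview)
This theorem carries no proof in the present paper: it is quoted verbatim as ``the main result of~\cite{DotsenkoFoissy}'' and closed with a \qed, so there is nothing here to compare your proposal against. The only hint the paper gives about the original argument is that the definitions of $\mathcal T_L$ and $\mathcal T_R$ just above are labelled ``Proof of Theorem~1 in~\cite{DotsenkoFoissy}'', which does suggest that the proof in that reference proceeds by a combinatorial dissection of enriched rooted trees along the lines you outline.

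That said, your sketch does not land on the generators $\mathcal T_L$ as the paper defines them. The paper specifies $\mathcal T_L$ to be the species of $\mathcal C'$-enriched rooted trees whose root vertex alone is decorated by an element of $\mathcal X$, with all other vertices carrying $\mathcal C'$-decorations. Your blocks, by contrast, are the $\mathcal X$-connected subtrees ``endowed with the $\mathcal X$-parts of the decorations'' at every vertex, i.e.\ purely $\mathcal X$-decorated trees. These are not the same species, so either your dissection is producing a different (possibly still valid) set of free generators, or the bijection you have in mind is not the one you wrote down. Before claiming the blocks are $\mathcal T_L$-trees you would need to explain how the full $\mathcal C'$-decorations on non-root vertices of a block are recovered, or else argue separately that your $\mathcal X$-decorated blocks and the paper's $\mathcal T_L$ give isomorphic free-module presentations. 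A symmetric caution applies to your treatment of $\mathcal T_R$ in part~(2).
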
  

An immediate consequence of this result is the following, since the map of twisted
associative algebras $u\C^*\longrightarrow u\A^*$ is both
left and right free.

\begin{cor}[Theorem 2 in~\cite{DotsenkoFoissy}]\label{cor:BrLie}
The brace operad $\Br$ is free as a left and as 
a right $\preLie$-module.\qed
\end{cor}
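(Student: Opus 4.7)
The plan is to obtain Corollary~\ref{cor:BrLie} as a direct application of the preceding theorem of Dotsenko--Foissy to the unit morphism of Hopf cooperads $\varphi\colon u\C^* \longrightarrow u\A^*$. By construction, $\preLie_{u\C^*} = \preLie$ and $\preLie_{u\A^*} = \Br$, and the map $\preLie_{u\C^*} \longrightarrow \preLie_{u\A^*}$ produced by the functor $\mathcal{C}\longmapsto \preLie_{\mathcal{C}}$ agrees with the Gerstenhaber--Voronov map $g\colon \preLie \longrightarrow \Br$. Both statements in the theorem have the same form of hypothesis, so the whole matter reduces to the following verification: viewed as twisted associative algebras via the Cauchy product of symmetric sequences, $u\A^*$ is free as a left $u\C^*$-module \emph{and} free as a right $u\C^*$-module.

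To verify this, I would first unpack the twisted associative algebra structures: in arity $n$, $u\C^*(n)$ is the trivial $S_n$-representation $\kk$, while $u\A^*(n)$ is the regular representation $\kk[S_n]$, and the Cauchy product $-\otimes_c -$ is the arity-wise induction from $S_p\times S_q$ to $S_{p+q}$. The map $\varphi$ in arity $n$ is the symmetrizer $1\mapsto \sum_{\sigma\in S_n}\sigma$. The goal is to exhibit a symmetric sequence $\mathcal X$ and isomorphisms $u\C^*\otimes_c \mathcal X \xrightarrow{\sim} u\A^*$ of left $u\C^*$-modules, and symmetrically on the right. This is a classical PBW-type decomposition of the regular representation: choosing $\mathcal X$ to consist of a basis of ``connected'' elements (for instance, the images of the Eulerian idempotents or, equivalently, any section of the quotient $u\A^*\twoheadrightarrow u\A^*/(u\C^*_{>0}\cdot u\A^*)$) yields the left freeness, and an analogous section on the right yields right freeness.

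With the hypothesis of the Dotsenko--Foissy theorem verified on both sides, the two conclusions of that theorem apply simultaneously to $\varphi\colon u\C^*\longrightarrow u\A^*$ and deliver precisely that $\Br$ is free as a left $\preLie$-module (with generators the species $\mathcal T_L$ of trees with root decorated by $\mathcal X$) and free as a right $\preLie$-module (with generators the species $\mathcal T_R$ of trees with leaves decorated by $\mathcal X$). This yields Corollary~\ref{cor:BrLie}.

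The main obstacle is the verification in the second step: while morally an instance of PBW applied to symmetric versus all tensors, the Cauchy-product/twisted-associative-algebra setting requires identifying explicit left and right bases. In practice, one simply cites the classical decomposition of $\kk[S_n]$ obtained from the Eulerian idempotents, or alternatively the Poincar\'e--Birkhoff--Witt isomorphism dualised in the species framework, so this step is routine once the language is set up.
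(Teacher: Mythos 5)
Your proposal matches the paper's argument: Corollary~\ref{cor:BrLie} is deduced by applying the quoted Dotsenko--Foissy theorem to the unit morphism $u\C^*\longrightarrow u\A^*$, using that this map of twisted associative algebras is both left and right free. The paper simply asserts this last freeness (deferring to~\cite{DotsenkoFoissy}), whereas you sketch its verification via the Eulerian-idempotent/PBW decomposition of $\kk[S_n]$; either way the approach is the same.
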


From this, we obtain the following HKR theorem for smooth
brace algebras.

\begin{theorem}\label{thm:BrPLie}
The map $g:\preLie\longrightarrow \Br$ satisfies the HKR
property: for every smooth brace algebra $A$ there exists
a natural quasi-isomorphism
\[ \HKR_A : \Def_*(g^*A) \longrightarrow \Omega_{\FF,A}^* \]
where $\Omega_{\FF,A}^* = \mathcal T_R(\Omega_A^1)$ where
$\mathcal T_R$ is the endofunctor of rooted
trees with vertices of maximal depth decorated by Lie words.\qed
\end{theorem}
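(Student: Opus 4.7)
The plan is to apply Corollary~\ref{cor:PBW}: we reduce the theorem to showing that the Koszul dual map $g^!\colon \Br^! \to \Perm$ satisfies the PBW property, that is, that $\Perm$ is free as a right $\Br^!$-module with generating species identifiable with $\mathcal{T}_R$.

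The starting structural input is Corollary~\ref{cor:BrLie}, which is Dotsenko--Foissy's theorem applied to the Hopf cooperad morphism $u\C^* \to u\A^*$: the latter is right-free with generators being Lie words, as a consequence of the classical Milnor--Moore and PBW theorems for cocommutative Hopf algebras. This yields that $\Br$ is free as both a left and a right $\preLie$-module, and the generators of the right-free structure are the species $\mathcal{T}_R$ of $u\A^*$-enriched trees whose maximal-depth vertices are decorated by Lie words. In particular, this already establishes the PBW property for $g$ itself, and by Theorem~\ref{thm:duality} this is equivalent to $g^!$ being left Koszul---the situation \emph{dual} to what we need.

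To cross over to the PBW property for $g^!$ (equivalently, left Koszulness of $g$), I would invoke a Koszul-dual analogue of Dotsenko--Foissy's enrichment construction. The dualization of the $\preLie_\mathcal{C}$ construction yields a $\Perm_\mathcal{C}$ companion on the Koszul-dual side, in such a way that $\Br^!$ is identified with (a variant of) $\Perm_{u\A^*}$, and the freeness of $u\A^*$ over $u\C^*$ then yields that $\Perm = \Perm_{u\C^*}$ is free as a right $\Br^!$-module with generating species again given by $\mathcal{T}_R$. Applying Corollary~\ref{cor:PBW} then immediately delivers the HKR property for $g$, and the description $\Omega_{\FF,A}^* = \mathcal{T}_R(\Omega_A^1)$ is read off from the construction of the space of differential forms in Section~\ref{sec:morphism}, since the generators of the PBW decomposition of $g^!$ coincide with those of the diagonally pure resolution of $\Br$ as a left $\preLie$-module furnished by Theorem~\ref{thm:duality}.

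The main obstacle is precisely this Koszul-dual version of Dotsenko--Foissy's theorem: one must identify $\Br^!$ concretely and verify that $\Perm$ is right-free over it with the claimed generators. This requires a careful analysis of how the $\preLie_\mathcal{C}$ enrichment interacts with Koszul duality of operads, which is not formally stated in the cited references and may require developing the $\Perm_\mathcal{C}$ companion construction in detail; alternatively, one may try to avoid this by building the diagonally pure resolution of $\Br$ as a left $\preLie$-module directly by hand, using both the left- and right-freeness from Corollary~\ref{cor:BrLie} together with a relative Koszul-complex argument.
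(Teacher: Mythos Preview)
The paper gives no argument beyond the \qed mark; the theorem is recorded as an immediate consequence of Corollary~\ref{cor:BrLie}. Your proposal is therefore more explicit than the paper's own treatment, and you follow exactly the route the paper's framework suggests, namely Corollary~\ref{cor:PBW}. Your identification of the obstacle is accurate: applying that corollary to $g:\preLie\to\Br$ requires the Koszul dual map $\Br^!\to\Perm$ to satisfy the PBW property, and this is \emph{not} what Corollary~\ref{cor:BrLie} supplies. Right-freeness of $\Br$ over $\preLie$ gives PBW for $g$ itself, hence by Theorem~\ref{thm:duality} left-Koszulness of $g^!$---the wrong direction; and left-freeness of $\Br$ over $\preLie$ does not give left-Koszulness of $g$, as the paper itself stresses in the closing remark of Section~\ref{sec:examples}. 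So the gap you flag is real, and the paper does not fill it.

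Your two proposed fixes---a Koszul-dual companion $\Perm_{\mathcal C}$ of the Dotsenko--Foissy construction, or a direct diagonally pure resolution built from the two freeness statements---are both plausible, but neither appears in the paper. A third, more elementary route is worth noting and may be closer to what the authors have in mind: since $\Br$ is \emph{left}-free over $\preLie$, the restriction $g^*Q$ of a quasi-free $\Br$-algebra $Q=(\Br(V),d)$ is already quasi-free as a $\preLie$-algebra on $\mathcal T_L(V)$, so one can take $\FF=\preLie\circ\mathcal T_L$ with zero differential and the HKR map becomes essentially tautological for any $A$, smooth or not. This bypasses Koszul duality entirely, but it produces the endofunctor $\mathcal T_L$ rather than the $\mathcal T_R$ appearing in the statement, and the further identification of $\Omega_{\FF,A}^*$ with $\mathcal T_R(\Omega_A^1)$ for smooth $A$ would still need justification. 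In short, the \qed conceals precisely the difficulty you locate.
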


\medskip

\textit{Diassociative algebras.} Diassociative algebras 
were introduced by J.-L. Loday in~\cite{LodayDialgebras}. 
A diassociative algebra~\cite[Section 13.6]{LV} consists
of a vector space $V$ along with two associative operations 
\[\vdash \colon V\otimes V \longrightarrow V
	\quad\text{and}\quad
	\dashv \colon V\otimes V \longrightarrow V\] 
satisfying the following set of three quadratic relations:
\begin{equation*}
(x_1\dashv x_2)\dashv x_3 = x_1 \dashv (x_2\vdash x_3), 
\quad (x_1\vdash x_2)\dashv x_3 = x_1 \vdash (x_2\dashv x_3), 
\quad (x_1\dashv x_2)\vdash x_3 = x_1 \vdash (x_2\vdash x_3).
\end{equation*}
Any permutative algebra gives rise to a diassociative algebra 
by defining both products to be the permutative product, so
that we have a map $\mathsf{Dias}\longrightarrow \mathsf{Perm}$,
whose Koszul dual is the map $\mathsf{PreLie}\longrightarrow
\mathsf{Dend}$. In~\cite{PBW} the authors proved that this
morphism is PBW and, since it is known that $\mathsf{Dend}$
is a \emph{left} free $\A$-module with basis the operad of braces
$\mathsf{Br}$, we can use Corollary~\ref{cor:BrLie} and our
main theorem to obtain the following result:

\begin{theorem}
For every smooth permutative algebra $A$ there is a 
quasi-isomorphism
\[ \HKR_A : \Def_*^{\mathsf{Dias}}(A) 
	\longrightarrow \Omega_A^* \]
where the endofunctor $\YY$ is given by $\A\circ\mathcal T_R$
and $\mathcal T_R$ is the endofunctor of Theorem~\ref{thm:BrPLie}.
\qed
\end{theorem}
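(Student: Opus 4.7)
The plan is to apply Corollary~\ref{cor:PBW} (the PBW criterion) to the map $f\colon \mathsf{Dias}\longrightarrow \mathsf{Perm}$. Since both operads are Koszul and the Koszul dual map is $f^!\colon \mathsf{Perm}^!=\mathsf{PreLie}\longrightarrow \mathsf{Dias}^!=\mathsf{Dend}$, and since \cite{PBW} shows that this dual map enjoys the PBW property, we immediately conclude that $f$ satisfies the HKR property. By Theorem~\ref{thm:morphism}, for every smooth $\mathsf{Perm}$-algebra $A$ the resulting map $\HKR_A\colon \Def_*^{\mathsf{Dias}}(A)\longrightarrow \Omega_{\FF,A}^*$ is a quasi-isomorphism.

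The remaining task is to identify the endofunctor $\YY$. Following the duality arguments in the proof of Theorem~\ref{thm:duality}, the generating sequence of the diagonally pure quasi-free resolution of $\mathsf{Perm}$ over $\mathsf{Dias}$ is, as a graded $\Sigma$-module, identified with the basis of $\mathsf{Dend}$ viewed as a free right $\mathsf{PreLie}$-module. To obtain this basis explicitly, I would combine two known free-module decompositions. First, the hypothesis that $\mathsf{Dend}$ is a free \emph{left} $\A$-module with basis the brace operad gives an isomorphism of left $\A$-modules
\[ \mathsf{Dend}\;\cong\; \A\circ \mathsf{Br}. \]
Second, Corollary~\ref{cor:BrLie}, together with the identification in Theorem~\ref{thm:BrPLie}, gives an isomorphism of right $\mathsf{PreLie}$-modules
\[ \mathsf{Br}\;\cong\; \mathcal T_R\circ \mathsf{PreLie}. \]
Splicing these yields $\mathsf{Dend}\cong \A\circ \mathcal T_R\circ \mathsf{PreLie}$ as a right $\mathsf{PreLie}$-module, exhibiting $\A\circ \mathcal T_R$ as the basis. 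Transporting through the Koszul duality isomorphism of $\Tor$ spaces from the lemma preceding the proof of Theorem~\ref{thm:duality} delivers $\YY = \A\circ \mathcal T_R$, as claimed.

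The delicate step is the splicing of the two decompositions: one must check that the left $\A$-action on $\mathsf{Dend}$ and the right $\mathsf{PreLie}$-action on $\mathsf{Dend}$ are compatible with the isomorphisms above, so that the composite isomorphism is one of right $\mathsf{PreLie}$-modules with free basis $\A\circ \mathcal T_R$. This should follow from the fact that, in any operad, the left and right infinitesimal actions commute with the operad composition via associativity; but making sure that the isomorphism $\mathsf{Dend}\cong\A\circ\mathsf{Br}$ of left $\A$-modules is also one of right $\mathsf{PreLie}$-modules — a statement which amounts to the observation that the map $\A\to\mathsf{Dend}$ and the map $\mathsf{PreLie}\to\mathsf{Dend}$ interact only through $\mathsf{Br}$ on the basis level — is the point to verify carefully. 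Once this bookkeeping is in place, the conclusion is a direct application of Theorem~\ref{thm:morphism} and the description of $\Omega_{\FF,A}^*$ from Section~\ref{sec:morphism}.
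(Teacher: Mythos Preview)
Your proposal is correct and follows essentially the same route as the paper. The paper's argument (given in the paragraph immediately preceding the theorem, since the theorem itself is marked \texttt{\textbackslash qed} with no separate proof) invokes exactly the same three ingredients: the PBW property of $\mathsf{PreLie}\to\mathsf{Dend}$ from~\cite{PBW}, the fact that $\mathsf{Dend}$ is a free left $\A$-module with basis $\mathsf{Br}$, and Corollary~\ref{cor:BrLie} giving $\mathsf{Br}$ as a free right $\mathsf{PreLie}$-module with basis $\mathcal T_R$. Your caveat about verifying that the isomorphism $\mathsf{Dend}\cong\A\circ\mathsf{Br}$ of left $\A$-modules is also one of right $\mathsf{PreLie}$-modules is a genuine point of care that the paper simply takes for granted; this is worth noting but does not represent a divergence in strategy.
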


\medskip

\textit{The work of J. Griffin.} 
Let us now connect our formalism with the one developed by 
J.~Griffin. Motivated by the Hodge decomposition of 
Hochschild cohomology~\cite{BarrHodge,Gerstenhaber1987}, 
Griffin~\cite{Griffin} considered the problem ---like we 
do--- of computing the cohomology of a pull-back algebra 
$f^*A$ under a morphism of operads $f:\PP\longrightarrow
\QQ$. Since his motivation is slightly different from ours,
there is no mention of HKR-type theorems in his paper, nor 
of smooth algebras. 

However, one can find the following result in \textit{ibidem}, which
relates the Quillen homology of a $\QQ$-algebra $A$ to that
of its pull-back, which can be seen as a first approximation
to the problem of computing the cohomology of $f^*A$, and which
contains already a clear link between his and our formalism; 
see~Theorem 3.7-(II) in~\cite{Griffin}.

\begin{theorem}
Let $f:\PP\longrightarrow \QQ$ be a map of Koszul operads and let
$g:\QQ^\antishriek \longrightarrow \PP^\antishriek$ be its Koszul
dual map. Suppose that $\PP^\antishriek$ is a free right 
$\QQ^\antishriek$-comodule with basis~$\mathcal{X}$. 
Then for every $\QQ$-algebra $A$ there is an isomorphism
\[ 
\Bar_\PP(f^*A) 
\longrightarrow 
\mathcal{X}\circ\Bar_\QQ(A).
\hfill\qed
\]
\end{theorem}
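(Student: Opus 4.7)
The plan is to deduce the theorem essentially formally from Lemma~\ref{lem:fundamental lemma} together with the coalgebraic incarnation of the PBW property. As a first step, I would apply the fundamental lemma to the $\QQ$-algebra $A$, obtaining a natural isomorphism of $\PP^\antishriek$-coalgebras
\[ \Bar_\PP(f^*A) \;=\; \Bar_\phi(f^*A) \;\cong\; g^!\Bar_\psi(A) \;=\; g^!(\Bar_\QQ A). \]
This reduces the problem to identifying the coinduction functor $g^! \colon \QQ^\antishriek\hy\Cog \longrightarrow \PP^\antishriek\hy\Cog$ on the image of the bar construction.

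Second, I would check the coalgebraic counterpart of the identification used in the proof of the PBW property in~\cite{PBW}. As the right adjoint of the corestriction $g_*$, the functor $g^!$ admits a cotensor product description
\[ g^!(C) \;\cong\; \PP^\antishriek \,\square_{\QQ^\antishriek}\, C, \]
where $\PP^\antishriek$ is a $(\PP^\antishriek,\QQ^\antishriek)$-bicomodule via its own coproduct on the left and via $g$ on the right. Under the assumption that $\PP^\antishriek$ is free as a right $\QQ^\antishriek$-comodule with basis $\mathcal{X}$, that is $\PP^\antishriek \cong \mathcal{X}\circ\QQ^\antishriek$, the cotensor product collapses to the plain composition product, giving a natural isomorphism $g^!(C) \cong \mathcal{X}\circ C$. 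This is the coalgebraic dual of the identification $f_!(B) \cong \mathcal{X}\circ B$ from the PBW setting.

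Combining the two steps yields the desired chain of natural isomorphisms
\[ \Bar_\PP(f^*A) \;\cong\; g^!(\Bar_\QQ A) \;\cong\; \mathcal{X}\circ\Bar_\QQ(A). \]
The hard part is the second step: one must give a clean description of coinduction for coalgebras over cooperads and verify that the freeness of $\PP^\antishriek$ as a right $\QQ^\antishriek$-comodule reduces the cotensor product over $\QQ^\antishriek$ to the plain composition product with $\mathcal{X}$. Once this dual PBW-type identification is in place the theorem follows with no further work, since Lemma~\ref{lem:fundamental lemma} has already matched the twisting differentials coming from $\phi$ and $\psi$.
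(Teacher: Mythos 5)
The paper does not actually prove this statement: it is quoted from Griffin with a pointer to Theorem 3.7-(II) of that work, so there is no internal proof to compare against. Your reconstruction through Lemma~\ref{lem:fundamental lemma} is nevertheless the natural one and is sound in outline: the fundamental lemma reduces the theorem to computing the coinduction functor $g^!$ on the image of $\Bar_\QQ$, and the freeness hypothesis is exactly what collapses that computation to $\mathcal{X}\circ(-)$. Two remarks. First, the ``hard part'' you flag is smaller than you make it sound, because you never need a cotensor description of $g^!$ on arbitrary conilpotent coalgebras: $\Bar_\QQ(A)$ is, as a graded coalgebra, the cofree conilpotent $\QQ^\antishriek$-coalgebra on $A$, and the composition-of-right-adjoints argument already used in the paper's proof of Lemma~\ref{lem:fundamental lemma} identifies $g^!$ of a cofree coalgebra with the cofree $\PP^\antishriek$-coalgebra on the same object, that is, with $\PP^\antishriek\circ A\cong(\mathcal{X}\circ\QQ^\antishriek)\circ A=\mathcal{X}\circ\Bar_\QQ(A)$. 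What genuinely remains is to check that the bar differential transports to $1_{\mathcal{X}}\circ' d_{\Bar_\QQ(A)}$; this holds because the bar differential of $\Bar_\phi(f^*A)$ factors through the infinitesimal right $\QQ^\antishriek$-coaction on $\PP^\antishriek$ (using $f\phi=\psi g$ from the commuting square), and the isomorphism $\PP^\antishriek\cong\mathcal{X}\circ\QQ^\antishriek$ is one of \emph{right} $\QQ^\antishriek$-comodules, so it intertwines that coaction with $1_{\mathcal{X}}$ composed with the decomposition of $\QQ^\antishriek$. You should spell this differential check out rather than attribute it entirely to the fundamental lemma, which only matches the two bar constructions, not the $\mathcal{X}\circ(-)$ splitting. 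Second, a notational caution: the $g$ in the statement points $\QQ^\antishriek\to\PP^\antishriek$, whereas the $g$ of Lemma~\ref{lem:fundamental lemma} --- the map whose coinduction you invoke and which endows $\PP^\antishriek$ with its right $\QQ^\antishriek$-comodule structure --- points $\PP^\antishriek\to\QQ^\antishriek$; these are not the same map, and conflating them would make the bicomodule structure in your cotensor formula ill-defined.
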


Moreover, Griffin goes on to consider the case of maps of
Koszul operads $\PP\longrightarrow \QQ$ where $\QQ$ is obtained
from $\PP'$ and another operad $\PP$ by a filtered distributive
law~\cite{FilteredLaw} as originally defined by V. Dotsenko 
in~\cite{Filtered}; see~\cite{Griffin}*{Theorems 5.15 and 5.18}. 
The following result offers a complementary technique to compute
the tangent cohomology of a $\PP$-algebra coming from 
a smooth $\QQ$-algebra under the projection 
$f:\PP\longrightarrow \QQ$.

\begin{theorem}[Filtered HKR theorem]\label{thm:filtered}
Suppose $\PP$ is obtained from $\QQ$ and $\mathcal R$ by a filtered
distributive law, so that $\PP$ is isomorphic to $\QQ\circ\mathcal R$
as a right $\mathcal R$-module. For every smooth $\QQ$-algebra $A$
the cotangent homology of $f^*A$ is given by the endofunctor
\[ A\longrightarrow 	\mathcal R^{\emph\antishriek}(\Omega_A^1)\]
of ``$\mathcal R^{\emph\antishriek}$-enriched differential forms'' on $A$.
\end{theorem}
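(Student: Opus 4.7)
The plan is to derive the theorem from Corollary~\ref{cor:PBW}: it suffices to prove that the Koszul dual map $f^\antishriek \colon \QQ^\antishriek \longrightarrow \PP^\antishriek$ satisfies the PBW property with functor of generators naturally identified with $\mathcal R^\antishriek$. The hypothesis is a right $\mathcal R$-module statement about $\PP$, so the first step is to transport it to a right $\QQ^\antishriek$-module statement about $\PP^\antishriek$ via the theory of filtered distributive laws.

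First, I would invoke Dotsenko's construction from~\cite{Filtered} to dualise the filtered distributive law producing $\PP$. The associated graded of $\PP$ carries a classical distributive law $\mathcal R \circ \QQ \longrightarrow \QQ \circ \mathcal R$, whose Koszul dual is a distributive law on the cooperad $\mathcal R^\antishriek \circ \QQ^\antishriek$. Propagating this through the filtration yields an identification
\[ \PP^\antishriek \cong \mathcal R^\antishriek \circ \QQ^\antishriek \]
as right $\QQ^\antishriek$-modules, exhibiting $\PP^\antishriek$ as a \emph{free} right $\QQ^\antishriek$-module on generators $\mathcal R^\antishriek$. The classical case $\PP=\A$, $\QQ=\C$, $\mathcal R=\L$ illustrates the pattern: the usual PBW theorem $\A \cong \C \circ \L$ as right $\L$-modules \emph{is} precisely this identification, once one invokes the Koszul self-duality of $\A$ and the duality between $\C$ and $\L$.

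The main obstacle lies in this dualisation step: one must check that the Koszul property passes to $\PP^\antishriek$ through the filtered law and, more delicately, that \emph{freeness} over $\QQ^\antishriek$ is preserved rather than merely the underlying symmetric sequence isomorphism. Granting this, Corollary~\ref{cor:PBW} immediately yields that $f$ has the HKR property, with the diagonally pure generating sequence of a minimal quasi-free resolution $\FF$ of $\QQ$ in left $\PP$-modules identified, via the proof of Theorem~\ref{thm:duality}, with $\mathcal R^\antishriek$.

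To conclude, I would invoke Theorem~\ref{thm:morphism} together with Proposition~\ref{prop:forms}: for smooth $A$ the HKR map produces a quasi-isomorphism from $\Def_*(f^*A)$ to $\Omega_{\FF,A}^*$, whose homology is concentrated in degree zero and equals the Schur functor $\mathcal R^\antishriek$ applied to $\Omega_A^1$. Specialising to $\mathcal R = \L$ recovers the classical HKR formula, consistent with the discussion in Section~\ref{sec:com HKR} and with the remark in the introduction that in this case the generating functor is $V \longmapsto S^c(V[-1])[1]$.
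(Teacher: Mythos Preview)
Your approach is essentially identical to the paper's: show that the Koszul dual map has the PBW property (exhibiting $\PP^!$ as a free right $\QQ^!$-module on $\mathcal R^!$) and then invoke Corollary~\ref{cor:PBW}. The ``main obstacle'' you flag---that freeness, not just an underlying isomorphism of symmetric sequences, survives Koszul dualisation of the filtered distributive law---is precisely what the paper dispatches by citing Theorem~5.4 of~\cite{FilteredLaw}, which over a field of characteristic zero yields exactly this freeness statement.
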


\begin{proof}
Since we are working over a field of characteristic zero,
Theorem 5.4 in~\cite{FilteredLaw} guarantees that $\PP^!$
is a free right $\QQ^!$-module with generators $\mathcal R^!$,
so the claim follows.
\end{proof}

\subsection{The operadic butterfly of J.-L. Loday}

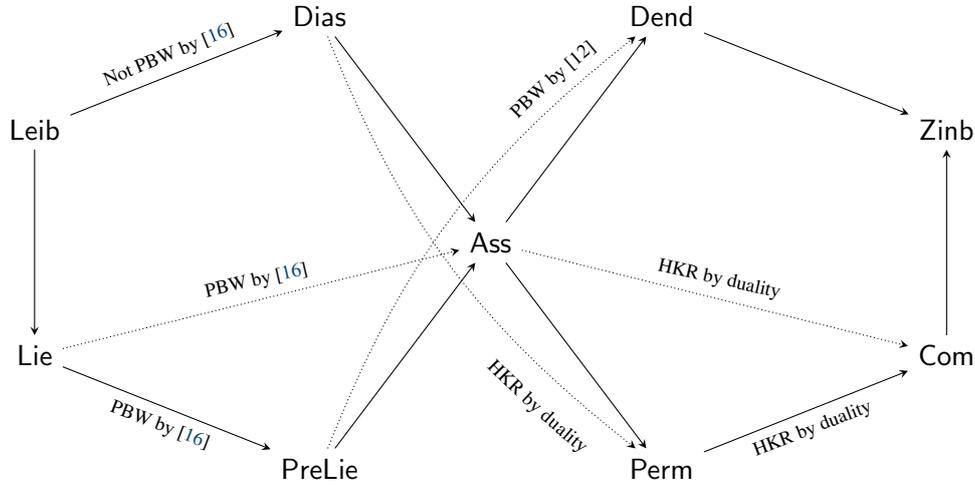
\begin{figure}
\begin{center}
 \begin{tikzpicture}[scale = 0.75]
\tikzset{->,>=stealth}
    		\node (A) at (1, 0) {$\mathsf{Ass}$};
    		\node (P) at (4, -4) {$\mathsf{Perm}$};
    		\node (C) at (9, -2) {$\mathsf{Com}$};
    		\node (Z) at (9, 2) {$\mathsf{Zinb}$};
    		\node (D) at (4, 4) {$\mathsf{Dend}$};
    		\node (D') at (-2, 4) {$\mathsf{Dias}$};
    		\node (PL) at (-2, -4) {$\mathsf{PreLie}$};
    		\node (L) at (-7, -2) {$\mathsf{Lie}$};
    		\node (L') at (-7,2 ) {$\mathsf{Leib}$};
    		\draw (A) to (P);
    		\draw[densely dotted] (L) to node[above,rotate= 13,black]{\scriptsize PBW by~\cite{PBW}}(A);
    		\draw (P) to node[below,rotate= 21,black]{\scriptsize HKR by duality}(C);
    		\draw (C) to node[above,rotate= -90,black] {} (Z);
    		\draw (D) to (Z);
    		\draw (A) to (D);
    			\draw[densely dotted] (A) to node[above,rotate= -13,black]{\scriptsize HKR by duality}
    (C);
    		\draw[bend right =15, densely dotted,
    			postaction={decorate,
    					decoration={text along path,
    					text align={right,right indent=0.5 cm}, 		
    					text={|\scriptsize| HKR by duality},
    					raise=- 5 mm,
    					%text effects/.cd,
    					%characters={text along path, yshift=-5pt}
    						}
    					}
    							] (D') to (P);
    		
    		\draw (PL) to (A);
    		\draw [bend left =15, densely dotted] (PL) to (D);
    		\path [bend left =15,
    			postaction={decorate,
    				decoration={text along path,
    							text align={right,right indent=0.5 cm}, 
    							text={|\scriptsize|PBW by [12]},
    							raise=2 mm	
    							}
    						}
    					] (PL) to (D);
    		\draw (L) to node[below,rotate= -21,black]{\scriptsize PBW by~\cite{PBW}} (PL);
    		\draw (D') to (A);
    		\draw (L') to node[above,rotate= 22.5,black]{\scriptsize Not PBW by~\cite{PBW}}(D');
    		\draw (L') to (L);
    \end{tikzpicture}
\end{center}
\caption{The operadic butterfly.}
\end{figure}

Let us recall from~\cite{Butterfly} 
that we can arrange certain nine operads into a ``butterfly' diagram of morphisms, as in the figure above. We
record those maps which we know satisfy the PBW property
and which we know satisfy the HKR property. Most of the claims follow
immediately by duality (Theorem~\ref{thm:duality}) from the results obtained in~\cite{PBW},
or by the following simple remark:

\begin{remark}\label{rem:size}
Note that if $f:\PP\longrightarrow \QQ$ is PBW, then
we must have $\dim_\kk\PP(n) \leqslant\dim_\kk\QQ(n)$
for each $n\in\NN$. In particular, since $\dim_\kk\mathsf{Dias}(2) > \dim_\kk\mathsf{Ass}(2)$, $\dim_\kk\mathsf{Leib}(2) > \dim_\kk\mathsf{Lie}(2)$, and since $\dim_\kk\mathsf{PreLie}(3) > \dim_\kk\mathsf{Ass}(3)$, the respective maps in Figure 1 are not PBW.
\end{remark}

It would be interesting to determine if the remaining arrows
enjoy the HKR or the PBW property or if, perhaps, they enjoy
none of the two.

\begin{remark} It is well
known~\cite{Chapoton2010} that the map of operads
$\mathsf{Lie}\longrightarrow 
\mathsf{PreLie}$ makes its 
codomain a free left module. 
However, the generators
exhibiting $\mathsf{PreLie}$ as
a left free $\mathsf{Lie}$-module
are not concentrated in weight
zero, so that, as expected,
the map from $\mathsf{Perm}$
onto $\mathsf{Com}$ is not PBW. In fact, in general,
the extra weight degree we have considered means
a map $f:\PP\longrightarrow \QQ$ that is left free
will not be left Koszul unless it is the identity,
which shows that it is crucial to replace the `left free'
condition to a left Koszul condition. 
\end{remark}

\appendix

\section{Recollections on operads}\label{sec:recollections}
\subsection{Operads and their algebras and modules}

Let us fix a reduced symmetric operad $\mathcal P$ and write 
$\mathcal P\hy\Alg$ for the category of dg $\PP$-algebras. 
The operad $\PP$, viewed as a monad, gives the left adjoint 
\[\PP: \SMod \longrightarrow \PP\hy\Alg \] to the forgetful
functor \[\# : \PP\hy\Alg\longrightarrow \SMod.\]

Fix a dg $\PP$-algebra $A$ as before. An \new{operadic $A$-module} is
a dg $\Sigma$-module $M$ along with an action 
$\gamma_M :
	\PP\circ (A,M)
		\longrightarrow M$ 
so that 
\[\gamma_M 
	(1\circ(\gamma_A,\gamma_M)) 
		= \gamma_M(\gamma\circ (1,1)).\] 
Here $\PP\circ (A,M)$ is the submodule of $\PP(A\oplus M)$ which
is linear in $M$. 
	
It is useful to note that if $\PP=\mathsf{As}$ and if $A$ is an 
$\PP$-algebra or, what is the same, an associative algebra, then
an operadic $A$-module is the same as an $A$-bimodule and \emph{not}
a left (or right) $A$-module. Similarly, the operadic modules for 
commutative algebras are the symmetric bimodules. In fact, there is a functor 
 \[U_\PP : 
 	\PP\hy\Alg	
 			\longrightarrow
					\A\hy\Alg,\] 
the last being the category of dga algebras, so that the category
of operadic $A$-modules is isomorphic to the category of left 
$U_\PP(A)$-modules of the associative algebra $U_\PP(A)$. 

\begin{definition} We call $U_\PP(A)$ the \new{associative enveloping algebra of $A$}. \end{definition}

Concretely, $U_\PP(A)$ is spanned by trees
with one leaf pointed by the only element in~$\kk$ under
 the relation that identifies the corolla with root 
 $\mu\circ_i \nu$ with the corolla with root  
$\mu$ and $\nu$ acting on the leaves $i,i+1,\ldots$, and 
we will write a generic element by 
\[ u(a_1,\ldots,a_{i-1},-,a_{i+1},\ldots,a_n)\]
where $u$ is an operation of $\PP$ and the empty slot
corresponds to the leaf marked by~$\kk$. The algebra
structure is defined by concatenation through the pointed leaf
and the root through the partial composition $\circ_i$ of $\PP$. 
We refer the reader to~\cite{Anton} for a useful reinterpretation
of $U_\PP$ through the language of 2-colored operads.

As useful examples, we note that in the case 
of associative and Lie algebras, we recover
the usual notion of enveloping algebra: for an 
associative algebra $A$ we have that 
$U_{\mathsf As}(A) = A\otimes A^{\rm op}$, 
for a Lie algebra $L$ we have that 
$U_{\mathsf{Lie}}(L) = U(L)$; note in 
both cases we are considering non-unital 
algebras and non-unital actions.

Given a map of $\PP$-algebras $f:B\longrightarrow A$, we obtain
two maps 
\[ f^*:{}_A\Mod
		\longrightarrow {}_B\Mod 
			\,\text { and } 
 \,f_{\,!} : {}_B\Mod
 		 \longrightarrow
						{}_A\Mod\] 
corresponding respectively to the restriction and extension
of scalars,  and a map 
\[ U_\PP(f) : U_\PP(B)\longrightarrow U_\PP(A).\]
Then the previous two adjoint functors are simply the usual
functors of restriction and extension for $U_\PP(f)$. 
We can also describe the free modules
as follows. If $X$ is a dg $\Sigma$-module, we have a coequalizer
diagram 
\[
	\begin{tikzcd}
 		\PP(\PP(A),X) \arrow[r,shift left = .25 em]
 				\arrow[r,swap,shift right= .25 em] & 
 				\PP(A,X) \arrow[r] & A \circ_\PP X 
 				\end{tikzcd}
\] 
where the arrows are $\gamma(1,1)$ and $1(\gamma_A,1)$ and $A \circ_\PP X$
is the free operadic $A$-module on $X$, so that 
\[ A\circ_\PP - : \SMod
\longrightarrow {}_A\Mod
\]
 is left adjoint to the forgetful 
functor 
\[ \# : {}_A\Mod\longrightarrow \SMod.\]
Graphically, generators of $A\circ_\PP X$ correspond
to corollas with their root labeled by an operation
of $P$, all whose leaves are labeled by elements of
$A$ except for one, which is labeled by an element
of $X$, and we impose the relations for each $i,l,n\in\NN$, each pair of operations $\mu,\nu\in \PP$ with $\mu$ of arity $n$ and each $n$-tuple $(a_1,\ldots,a_n)$ of elements of $A$,
\[ \mu(a_1,\ldots,a_i,\nu(a_{i+1},\ldots,a_{i+l}),a_{i+l+1},\ldots, a_n,x) =
	(\mu\circ_i\nu)(a_1,\ldots,a_n,x).\] 
In case $A$ or $\PP$ are graded, signs will
appear owing to the Koszul sign rule. 	 

\subsection{Derivations and Kähler differentials}\label{app:derivations}

As before, let us fix an operad
$\PP$, and let us also fix a $\PP$-algebra $A$.  If $M$ is an operadic $A$-module then a \new{$\PP$-derivation of $M$} is a linear map $d : A\longrightarrow M$ such that $\gamma_M(1\circ' d) = d\gamma_A$. For a fixed choice $f:B\longrightarrow A$
of a map $\PP$-algebras, we say $d$ is $B$-linear whenever it vanishes on the image of $f$. Following~\cite{hinich1997homological}, we write $\Der_B(A,M)$ for the complex
of such derivations, which defines a functor 
\[
\Der_B(A,-)\colon {}_A\Mod\longrightarrow \Cxs.\]
In particular, if $u
\colon A\longrightarrow U$ is a map of 
$\PP$-algebras, then $U$ is
an operadic $A$-module and we can
consider $\Der_B(A,U)$
the complex of $B$-linear derivations $A\longrightarrow U$. We refer
the reader to~\cite{LV}*{\S 12.3.19}
for a proof of the following:

\begin{prop}\label{par:kahler} The functor $\Der_B(A,-)$ is representable. \qed
\end{prop}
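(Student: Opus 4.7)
The plan is to produce the representing object explicitly as the module of relative Kähler differentials $\Omega^1_{A|B}$, and then to show that precomposition with a universal derivation $d_A\colon A\longrightarrow \Omega^1_{A|B}$ yields the required natural bijection. First I would form the free operadic $A$-module $A\circ_\PP A^\#$ on the underlying $\Sigma$-module of $A$; as described in the preceding paragraphs, its generators can be pictured as corollas rooted at some $\mu\in\PP$, with all but one leaf labeled by elements of $A$ and the distinguished leaf labeled by a symbol $da$ with $a\in A$.

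Next I would quotient this free module by the relations that any universal derivation must satisfy: (i) $\kk$-linearity of $a\longmapsto da$; (ii) for every operation $\mu\in\PP(n)$ and every $(a_1,\ldots,a_n)\in A^n$, the generalised Leibniz relation
\[
d\bigl(\mu(a_1,\ldots,a_n)\bigr) = \sum_{i=1}^n \mu(a_1,\ldots,a_{i-1},-,a_{i+1},\ldots,a_n)\cdot da_i,
\]
interpreted inside $A\circ_\PP A^\#$ with the usual Koszul signs; and (iii) $d(f(b))=0$ for every $b\in B$. Call the resulting quotient $\Omega^1_{A|B}$. By construction, the assignment $d_A\colon a\longmapsto [da]$ is a $B$-linear $\PP$-derivation.

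For the universal property, given any operadic $A$-module $M$ and $B$-linear derivation $\delta\colon A\longrightarrow M$, I would define a morphism $\hat\delta\colon \Omega^1_{A|B}\longrightarrow M$ of operadic $A$-modules by sending the generator $u(a_1,\ldots,-,\ldots,a_n)\cdot da$ to $\gamma_M(u\circ(a_1,\ldots,\delta(a),\ldots,a_n))$ and extending $A$-linearly. The Leibniz rule for $\delta$ makes $\hat\delta$ respect relation (ii), the vanishing of $\delta$ on the image of $f$ makes it respect (iii), and additivity handles (i); so $\hat\delta$ is a well-defined morphism of operadic $A$-modules. The assignments $\delta\longmapsto\hat\delta$ and $\varphi\longmapsto \varphi\circ d_A$ are then mutually inverse and natural in $M$, giving the desired representation.

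The main obstacle, such as it is, lies in verifying that the two-sided ideal generated by the relations (ii) and (iii) is stable under both the $A$-action and the symmetric group actions, so that the Koszul signs appearing in the Leibniz rule are compatible with the monadic composition of $\PP$ and the quotient retains the structure of an operadic $A$-module. This can be tackled either by a direct combinatorial argument at the level of decorated corollas, tracking how the $\circ_i$ operations interact with the Leibniz expansion, or, more abstractly, by presenting $\Omega^1_{A|B}$ as a coequalizer and invoking the general representability results for monadic categories as used in~\cite{LV}.
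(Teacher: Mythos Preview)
Your proposal is correct. The paper does not actually prove this proposition: it defers to \cite{LV}*{\S 12.3.19} and then, immediately after the statement, spells out exactly the construction you give, presenting $\Omega_A^1$ as the coequalizer of the diagram
\[
\begin{tikzcd}
 A\circ_\PP d \PP(A)
 \arrow[r,shift left = .25 em]
 \arrow[r,swap,shift right= .25 em] &
 A\circ_\PP dA \arrow[r] & \Omega_A^1
\end{tikzcd}
\]
with the relative version obtained by additionally imposing $dB=0$. Your quotient-by-Leibniz-relations description is precisely this coequalizer written out elementwise, and your verification of the universal property is the standard one. The ``obstacle'' you flag about stability of the relations under the $A$-action is exactly what the coequalizer formulation handles automatically, so there is no gap.
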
 
We call the representing module the module of \new{relative Kähler differentials}
and write it $\Omega_{A\vert B}^1$. Explicitly, $\Omega_A^1$ is 
the coequalizer of the diagram 
\[\begin{tikzcd}
 		 A\circ_\PP d \PP(A)
 		 \arrow[r,shift left = .25 em]
 				\arrow[r,swap,shift right= .25 em] & 
 				A\circ_\PP dA \arrow[r] & \Omega_A^1
 				\end{tikzcd}
\]
so that $ \Omega_A^1$ is the free operadic
$A$-module on a copy
$dA$ of $A$ where we additionally impose the relations that, for 
each $i,l,n\in\NN$, each pair of operations $\mu,\nu\in \PP$ with 
$\mu$ of arity $n$ and each $n$-tuple $(a_1,\ldots,a_n)$ of 
elements of $A$, where $a'=d\nu(a_{i+1},\ldots,a_{i+l})$: 
\[ \mu(a_1,\ldots,a_i,a',a_{i+l+1},\ldots, a_n) =
	\sum_{t=1}^l (\mu\circ_i\nu)(a_1,\ldots,da_{i+t},\ldots,a_n)\]

The arrows are as follows: 
the uppermost arrow is induced from the map 
\[ 1(1,\gamma_A): \PP(A,d\PP(A))\longrightarrow \PP(A,dA),\]
while the lowermost arrow is induced from the 
following three maps:
\begin{tenumerate}
\item the arrow $\PP(A,d\PP(A))\longrightarrow \PP(A,\PP(A,dA))$ 
induced from the infinitesimal composite $1\circ' d : d\PP(A)
 \longrightarrow \PP(A,dA)$ obtained from the isomorphism $d:A\longrightarrow dA$, 
\item the arrow $\PP(A,\PP(A,dA))\longrightarrow (\PP\circ_{(1)} \PP)(A,dA)$ which is 
an inclusion and 
\item the arrow $\gamma_{(1)}(1,1)$.
\end{tenumerate}
The module of relative K\"ahler differentials $\Omega_{A\mid B}^1$ is defined similarly, with 
the extra relation that $dB=0$.
It is functorial
in both arguments in the following way. If we have a pair
of morphisms $ B \xrightarrow{\hem f \hem} A\xrightarrow{\hem g\hem} C$
of $\PP$-algebras 
we can consider any $A$-linear derivation of $A$ as a
$B$-linear derivation, so we get a morphism 
\[\Omega_{C\vert g}^1 \colon \Omega_{C\vert B}^1 \longrightarrow
\Omega_{C\vert A}^1\] representing the restriction.  
Similarly, any $B$-linear derivation $d:C\longrightarrow M$ 
defines a $B$-linear derivation $g^*d : A\longrightarrow f^*M$
so we obtain a morphism 
\[\Omega_{g\vert B}^1 : g_{\,!}\Omega_{A\mid B}^1\longrightarrow 
\Omega_{C\mid B}^1.\]

{{}} The following lemma describes
K\"ahler differentials and derivations of free
algebras. In particular, it follows the 
corresponding complexes of derivations
and of differentials of $\PP$-algebras of
the form $(\PP(V),d)$ are simple,
and correspond to ``nc-vector fields''
$X:V\longrightarrow \PP(V)$ determined
on the coordinates $v\in V$ by some
vector field $X:\partial_v\longmapsto X(\partial_v)$, and to ``nc-differential forms''
$f(v) dv$ where $f(v)\in Y$ is a
function on the coordinates.

\begin{lemma}\label{lema:isos} Let $A=\PP(V)$ be
the free $\PP$-algebra on $V$. Write $i:V\longrightarrow \PP(V)$ for the
canonical inclusion. 
Then $\Omega_A^1$ is canonically isomorphic to the free operadic
$X$-module generated by $V$, and we have isomorphisms of complexes
\[ i^* : \Der(A) \longrightarrow \hom(V,A),\qquad
i_* : A\otimes V 
\longrightarrow A\otimes_{UA}\Omega_A^1\] 
that assign a derivation $f:A\longrightarrow A$ to its restriction $fi$
and $x\tt v$ to the class of $x dv$. \end{lemma}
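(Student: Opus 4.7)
The proof proceeds in three formal steps, relying on universal properties and the Yoneda lemma.

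First, I would establish that for any operadic $A$-module $M$, the restriction map
\[
	\Der(\PP(V),M) \longrightarrow \hom_\Sigma(V,M), \qquad d\longmapsto d\circ i
\]
is a natural bijection. Injectivity follows from the derivation condition $\gamma_M(1\circ' d)=d\gamma_A$, which forces any derivation to be determined by its values on the $\Sigma$-generating set $i(V)\subseteq \PP(V)$. For surjectivity, given $\varphi\colon V\to M$ I would construct the extension $d_\varphi$ on a typical generator $\mu(v_1,\ldots,v_n)$ of $\PP(V)$ by the Leibniz-type formula
\[
	d_\varphi(\mu(v_1,\ldots,v_n))=\sum_j \gamma_M(\mu;v_1,\ldots,\varphi(v_j),\ldots,v_n),
\]
verifying it is well-defined and a derivation using the standard description of the free $\PP$-algebra. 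Specialising to $M=A$ gives the isomorphism $i^*$ of the statement.

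Second, combining this bijection with the representing property of $\Omega_A^1$ (Proposition~\ref{par:kahler}) and the universal property of the free operadic $A$-module $A\circ_\PP V$, I obtain, for every operadic $A$-module $M$, natural isomorphisms
\[
	\hom_{UA}(\Omega_A^1,M)\cong \Der(A,M)\cong \hom_\Sigma(V,M)\cong \hom_{UA}(A\circ_\PP V,M).
\]
By Yoneda this yields a canonical isomorphism $\Omega_A^1\cong A\circ_\PP V$ of operadic $A$-modules under which the class $dv$ corresponds to the free generator $1\cdot v$.

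Third, applying the functor $A\otimes_{UA}-$ to this identification, I would use the fact that every element $m\in A\circ_\PP V$ can be written as $u\cdot v$ for some $u\in UA$ and $v\in V$ to produce a canonical isomorphism $A\otimes V\cong A\otimes_{UA}(A\circ_\PP V)$ sending $a\otimes v$ to $a\otimes_{UA}(1\cdot v)$, with inverse given by sending $a\otimes_{UA}(u\cdot v)$ to $(a\cdot u)\otimes v$; this is well-defined because the two prescriptions agree modulo the tensor relations $(a\cdot u)\otimes_{UA} m = a\otimes_{UA} (u\cdot m)$. Composing with the iso from step two gives the desired map $i_*$.

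The only step with genuine content is the first, where one must write down the extension of $\varphi$ to a derivation on $\PP(V)$ and verify compatibility with the relations defining the free algebra (keeping track of Koszul signs should $V$ be dg). Everything else is formal: since $A=\PP(V)$ is free, its internal differential is induced from that of $V$, and the resulting isomorphisms of graded modules automatically respect differentials, giving isomorphisms of complexes as claimed.
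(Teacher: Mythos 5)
Your proposal is correct and follows essentially the same route as the paper: derivations out of the free algebra are determined by their restriction to $V$, the Yoneda lemma then identifies $\Omega_A^1$ with the free operadic $A$-module on $V$, and base change along $UA\to A$ gives $i_*$. The paper states this in three lines; your version merely spells out the extension formula for $d_\varphi$ and the well-definedness of the base-change map, which are the routine verifications the paper leaves implicit.
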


\begin{proof}
Since $X$ is free, any derivation $f:X\longrightarrow X$ is 
determined by its restriction to $V$, and $i^*$ is a bijection. 
From this and the Yoneda lemma 
it follows that $\Omega_X^1$ is the free left $UX$-module
generated by $V$, and hence that the canonical map
$ X\otimes V \longrightarrow X\otimes_{UX} \Omega_X^1$ 
is an isomorphism. 
\end{proof}

In particular, if we consider a
commutative algebra $A$ and the bar-cobar resolution $Y = \Omega BA$, we get
the following:

\begin{lemma}\label{par:CCiso} 
There is a natural isomorphism
$ A\otimes_{UY} \Omega_Y^1 
\longrightarrow A\tt s^{-1}\overline{BA}
 = s^{-1}\overline{C_*(A,A)}$
where the right hand side is the cyclic Hochschild
complex computing Hochschild homology of $A$ away
from degree $0$. \qed
\end{lemma}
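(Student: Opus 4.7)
The plan is to apply Lemma~\ref{lema:isos} directly to the associative bar--cobar resolution $Y = \Omega BA$, which by construction is the free associative dg algebra $(T(s^{-1}\overline{BA}), d_\Omega)$ on the graded space $V = s^{-1}\overline{BA}$, with cobar differential extending the reduced deconcatenation coproduct on $\overline{BA}$. First, by Lemma~\ref{lema:isos}, the map $Y \otimes V \to Y \otimes_{UY} \Omega_Y^1$, $y \otimes v \mapsto y\, dv$, is an isomorphism of complexes. Base-changing along the quasi-isomorphism $p \colon Y \to A$, through which $A$ inherits its $Y$-bimodule structure, produces an isomorphism of graded vector spaces
$$ A \otimes_{UY} \Omega_Y^1 \;\cong\; A \otimes V \;=\; A \otimes s^{-1}\overline{BA}. $$

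Next I would identify the transferred differential on the right-hand side. It decomposes into two contributions. The internal part comes from $d_\Omega$ restricted to $V$, which after (de)suspension is exactly the bar differential on $\overline{BA}$ producing the inner Hochschild face operators $[a_1|\cdots|a_i a_{i+1}|\cdots|a_n]$. The external part comes from the derivation relation $d(y y') = dy \cdot y' + y \cdot dy'$ in $\Omega_Y^1$ together with the free $T$-algebra structure of $Y$: when one pushes the $UY$-action through the augmentation $p \colon Y \to A$, the only surviving pieces are those in which all but one of the tensor factors of a word in $T(s^{-1}\overline{BA})$ are evaluated to elements of $A$. Unfolding this produces precisely the two outer Hochschild face operators $a_0 a_1 \otimes [a_2|\cdots|a_n]$ and $\pm a_n a_0 \otimes [a_1|\cdots|a_{n-1}]$. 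Summing, and keeping track of the overall desuspension by $s^{-1}$, one recovers the desuspended reduced cyclic Hochschild complex $s^{-1}\overline{C_*(A,A)}$.

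The main obstacle is the sign and suspension bookkeeping: because $V$ is itself a desuspension of $\overline{BA}$ and the cobar differential is built from the coproduct through an internal shift, matching the sum of internal and external contributions with the standard Hochschild boundary requires a careful Koszul-sign audit, particularly for the cyclic outer face that acquires a sign depending on the arity $n$. Naturality in $A$ is then automatic: each intermediate construction --- the bar functor $A \mapsto BA$, the cobar functor $BA \mapsto \Omega BA$, the K\"ahler functor $Y \mapsto \Omega_Y^1$, and the base change along $p$ --- is functorial in $A$, so the composite isomorphism is as well.
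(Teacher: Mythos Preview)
Your proposal is correct and follows exactly the approach the paper intends: the lemma is stated as an immediate corollary of Lemma~\ref{lema:isos} (the paper's proof is literally just a \qed), applied to the free associative algebra $Y=\Omega BA=T(s^{-1}\overline{BA})$ with generators $V=s^{-1}\overline{BA}$. Your write-up in fact supplies more than the paper does, namely the explicit identification of the transferred differential with the Hochschild boundary and the naturality discussion; the only minor quibble is that the ``base-change'' phrasing is unnecessary, since once $\Omega_Y^1$ is free on $V$ as a $UY$-module one has $M\otimes_{UY}\Omega_Y^1\cong M\otimes V$ for any $UY$-module $M$, in particular for $M=A$.
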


\bibliographystyle{alpha}
\begin{multicols}{2}
\bibliography{hkr-paper}
\end{multicols}
\Addresses
\end{document}